\documentclass[12pt]{amsart}
\setlength{\textwidth}{12.2cm}
\setlength{\textheight}{18cm}
\setlength{\parindent}{20pt}

\usepackage{times,a4wide,mathrsfs,amssymb,amsmath,amsthm}

\newcommand{\C}{\mathbb{C}}

\newcommand{\QQ}{\mathbb{Q}}
\newcommand{\NN}{\mathbb{N}}
\newcommand{\PP}{\mathbb{P}}

\newcommand{\Sy}{\mathfrak S}
\newcommand{\Bb}{\mathfrak B}
\newcommand{\oo}{\mathfrak o}

\newcommand{\MM}{\mathcal M}

\newcommand{\gr}{\hbox{Gr}}
\newcommand{\wt}{\widetilde}
\newcommand{\rom}{\romannumeral}

\DeclareMathOperator{\aut}{Aut}
\DeclareMathOperator{\ide}{id}

\DeclareMathOperator{\ima}{Im}
\DeclareMathOperator{\ord}{ord}
\DeclareMathOperator{\rank}{rank}

\newtheorem{theorem}{Theorem}[section]

\newtheorem{lemma}[theorem]{Lemma}

\newtheorem{corollary}[theorem]{Corollary}
\newtheorem{proposition}[theorem]{Proposition}
\newtheorem{conjecture}[theorem]{Conjecture}
\newtheorem{remark}[theorem]{Remark}
\newtheorem{definition}[theorem]{Definition}
\newtheorem{convention}{Conventions}

\newtheorem{nonumbering}{Theorem}

\newtheorem{nonumberingc}{Corollary}

\newtheorem{nonumberingt}{Acknowledgements}

\begin{document}
\author[Robert Laterveer]
{Robert Laterveer}

\address{Institut de Recherche Math\'ematique Avanc\'ee,
CNRS -- Universit\'e 
de Strasbourg,\
7 Rue Ren\'e Des\-car\-tes, 67084 Strasbourg CEDEX,
FRANCE.}
\email{robert.laterveer@math.unistra.fr}

\title{ Algebraic cycles on some special hyperk\"ahler varieties}

\begin{abstract} This note contains some examples of hyperk\"ahler varieties $X$ having a group $G$ of non--symplectic automorphisms, and such that the action of $G$ on certain Chow groups of $X$ is as predicted by Bloch's conjecture. The examples range in dimension from $6$ to $132$. For each example, the quotient $Y=X/G$ is a Calabi--Yau variety which has interesting Chow--theoretic properties; in particular, the variety $Y$ satisfies (part of) a strong version of the Beauville--Voisin conjecture.
\end{abstract}

\keywords{Algebraic cycles, Chow groups, motives, Bloch's conjecture, Bloch--Beilinson filtration, hyperk\"ahler varieties, K3 surfaces, Hilbert schemes, Calabi--Yau varieties, non--symplectic automorphisms, multiplicative Chow--K\"unneth decomposition, splitting property, Beauville--Voisin conjecture, finite--dimensional motive}
\subjclass[2010]{Primary 14C15, 14C25, 14C30.}

\maketitle

\section{Introduction}

Let $X$ be a hyperk\"ahler variety of dimension $n=2k$ (i.e., a projective irreducible holomorphic symplectic manifold, cf. \cite{Beau0}, \cite{Beau1}). Let $G\subset\aut(X)$ be a finite cyclic group of order $k$
consisting of non--symplectic automorphisms. We will be interested in the action of $G$ on the Chow groups $A^\ast(X)$.
 (Here, $A^i(X):=CH^i(X)_{\QQ}$ denotes the Chow group of codimension $i$ algebraic cycles modulo rational equivalence with $\QQ$--coefficients. We will write $A^i_{hom}(X)$ and $A^i_{AJ}(X)\subset A^i(X)$ to denote the subgroups of homologically trivial (resp. Abel--Jacobi trivial) cycles.)

We will suppose $X$ has a multiplicative Chow--K\"unneth decomposition, in the sense of \cite{SV}. This implies the Chow ring of $X$ is a bigraded ring $A^\ast_{(\ast)}(X)$, 
where each Chow group splits as
  \[ A^i(X) = \bigoplus_{j} A^i_{(j)}(X)\ ,\]
and the piece $A^i_{(j)}(X)$ is expected to be the graded $\gr^j_F A^i(X)$ for the conjectural Bloch--Beilinson filtration $F^\ast$ on Chow groups. (Conjecturally, all hyperk\"ahler varieties have a multiplicative Chow--K\"unneth decomposition. This has been checked for Hilbert schemes of $K3$ surfaces \cite{SV}, \cite{V6}, and for generalized Kummer varieties \cite{FTV}.)

Since $H^{n,0}(X)=H^{2,0}(X)^{\otimes k}$, the group $G$ acts as the identity on $H^{n,0}(X)$. For $i<n$, we have that $\sum_{g\in G} g^\ast$ acts as $0$ on $H^{i,0}(X)$. The Bloch--Beilinson conjectures \cite{J2} thus imply the following conjecture:

\begin{conjecture}\label{theconj} Let $X$ be a hyperk\"ahler variety of dimension $n=2k$, and let $G\subset\aut(X)$ be a finite cyclic group of order $k$ of non--symplectic automorphisms.
Then
  \[  \begin{split}   & A^n_{(n)}(X)\cap A^n(X)^G=A^n_{(n)}(X)\ ;\\
                           &A^n_{(j)}(X)\cap A^n(X)^G=0\ \ \ \hbox{for}\ 0<j<n\ ;\\
                           & A^i_{(i)}(X)\cap A^i(X)^G=0\ \ \ \hbox{for}\ 0<i<n\ .\\
                         \end{split}\]
   \end{conjecture}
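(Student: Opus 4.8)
The plan is to deduce all three equalities from a single scalar statement: for a generator $g$ of $G$, writing $\zeta$ for the primitive $k$-th root of unity defined by $g^\ast\sigma=\zeta\,\sigma$ (with $\sigma$ the holomorphic symplectic form), I claim that $g^\ast$ acts on each relevant piece $A^i_{(j)}(X)$ as multiplication by $\zeta^{j/2}$, that is, by the very scalar by which it acts on $H^{j,0}(X)=\C\,\sigma^{j/2}$. Granting this, the conjecture is immediate: for $j=n$ one has $\zeta^{n/2}=\zeta^{k}=1$, whence $g^\ast=\ide$ on $A^n_{(n)}(X)$; and for $0<j<n$ one has $\zeta^{j/2}\neq 1$, so the $\zeta^{j/2}$-eigenspace $A^i_{(j)}(X)$ meets the invariants $A^i(X)^G=\ker(g^\ast-\ide)$ only in $0$ (the pieces with $j$ odd, where $H^{j,0}(X)=0$, carry no invariants for the same reason). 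Since $G$ is finite I may, after averaging the Chow--K\"unneth projectors, assume that the multiplicative Chow--K\"unneth decomposition, and hence the splitting $A^i(X)=\bigoplus_j A^i_{(j)}(X)$, is $G$-equivariant, so that $g^\ast$ preserves each summand.

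For the Hodge-theoretic heart I use the principle, going back to Mumford and made into a theorem under finite--dimensionality, that the action of a correspondence on $A^i_{(j)}(X)=\gr^j_F A^i(X)$ is controlled by its action on the holomorphic $j$-forms $H^{j,0}(X)$. Concretely, one writes the transcendental summand governing $A^i_{(j)}(X)$ as the smallest sub-Hodge-structure whose complexification contains $H^{j,0}(X)=\C\,\sigma^{j/2}$; since $g^\ast$ is a morphism of Hodge structures, $\ker(g^\ast-\zeta^{j/2})$ is a sub-Hodge-structure, and it contains $\sigma^{j/2}$ by the computation $g^\ast\sigma^{j/2}=\zeta^{j/2}\sigma^{j/2}$. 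Hence it is the whole summand, and $g^\ast=\zeta^{j/2}$ on that entire cohomological realisation. That this minimal sub-Hodge-structure really is the one governing $A^i_{(j)}(X)$ is where the geometry enters: it is read off from the motivic decomposition of $X$ in terms of the underlying $K3$ surface $S$, whose transcendental lattice $H^2_{tr}(S)$ is by construction generated by $H^{2,0}(S)$.

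For the motivic step I invoke finite--dimensionality of $h(X)$ in the sense of Kimura--O'Sullivan, which holds for the examples of this note precisely because they are built from $K3$ surfaces with finite--dimensional motive. The correspondence $g^\ast-\zeta^{j/2}\,\ide$, restricted to the summand governing $A^i_{(j)}(X)$, acts as $0$ on cohomology by the previous step; by the nilpotence theorem for finite--dimensional motives it is therefore nilpotent, and so acts nilpotently on $A^i_{(j)}(X)$. On the other hand $g$ has order $k$, so $g^\ast$ satisfies $(g^\ast)^k=\ide$ and is semisimple on $A^i_{(j)}(X)$; a semisimple operator for which $g^\ast-\zeta^{j/2}\,\ide$ is nilpotent equals $\zeta^{j/2}\,\ide$. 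This establishes the scalar statement, and with it the three equalities.

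The main obstacle is the Hodge-theoretic step: pinning down the cohomological realisation of each governing summand and verifying that it is indeed generated, as a Hodge structure, by $H^{j,0}(X)$. This requires unwinding the explicit motivic decomposition of $X$, for instance realising $h(S^{[k]})$ as a direct summand of $h(S)^{\otimes k}$, or the analogous description for generalised Kummer varieties, while tracking the induced $G$-action, and then reducing the generation statement to Bloch's conjecture for the non--symplectic automorphism of $S$, which is the genuinely geometric input. Establishing finite--dimensionality and the $G$-equivariant multiplicative Chow--K\"unneth decomposition for the concrete families is the remaining, by now fairly standard, ingredient.
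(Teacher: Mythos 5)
The statement you are proving is a \emph{conjecture} in the paper: the author offers no proof, only the observation that it would follow from the Bloch--Beilinson conjectures, and then spends the rest of the paper verifying isolated fragments of it for very special examples (theorems \ref{main3} and \ref{main}). Your proposal does not close this gap, because its central step --- the ``principle'' that the action of a correspondence on $A^i_{(j)}(X)$ is controlled by its action on $H^{j,0}(X)$ --- is not a theorem under finite--dimensionality; it is essentially the generalized Bloch conjecture, i.e.\ the very content that makes the statement conjectural. What finite--dimensionality actually gives (theorem \ref{nilp}) is that a correspondence which is \emph{homologically trivial}, i.e.\ trivial on all of $H^\ast$, is nilpotent on Chow groups. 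The correspondence $\Gamma_g-\zeta^{j/2}\Delta$ restricted to the motivic summand governing $A^i_{(j)}(X)$ kills $H^{j,0}$ but is far from zero on the rest of the realisation (e.g.\ on the large $(p,p)$-parts of $T_S^{\otimes m}\otimes NS$-type summands, where $g^\ast$ has assorted eigenvalues). To make it homologically trivial one must first subtract a cycle supported on a product of proper subvarieties representing the $(p,p)$-part; producing such a cycle requires the Hodge conjecture for the relevant powers of $S$ \emph{and} a case of Voisin's standard conjecture (lifting a class that dies on the complement of $V\times V$ to an actual cycle on $V\times V$). The paper's remark following theorem \ref{main3} shows this failing already in the most favourable example: even for $S_3^{[3]}$, with $\rho(S_3)=20$ and the Hodge conjecture available, the author cannot prove $A^6_{(6)}(X)\cap A^6(X)^G=A^6_{(6)}(X)$ for exactly this reason.

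Your Hodge-theoretic minimality argument also has an internal flaw: $\ker(g^\ast-\zeta^{j/2}\,\ide)$ is an eigenspace for a non-rational eigenvalue, hence a subspace of the \emph{complexified} cohomology that is not defined over $\QQ$; it is therefore not a rational sub-Hodge structure, and the conclusion ``it contains $\sigma^{j/2}$, hence equals the minimal sub-Hodge structure containing $\sigma^{j/2}$'' does not follow. (The correct statement, used in lemma \ref{pi2k}, is that the \emph{averaged} projector $\Delta^G$ annihilates $H^{2,0}(S)$ and hence the whole indecomposable Hodge structure $T_S$ --- but this only handles the summands built from a single tensor factor of $T_S$, which is why the paper obtains results for $j=2$ in general, for $j=4$ only when $S$ is $\rho$-maximal, and for $j=n$ not at all.) Finally, the claim that one can ``average the Chow--K\"unneth projectors'' to force $G$-equivariance of the MCK decomposition is not automatic (averaging need not preserve multiplicativity); the paper instead proves directly that the specific Vial projectors commute with natural automorphisms (lemmas \ref{comm}--\ref{idempx}). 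So your outline correctly identifies where the difficulty lives, but the step you describe as ``the main obstacle'' is in fact an open conjecture, not a verifiable lemma.
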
                        
(Here $A^i(X)^G\subset A^i(X)$ denotes the $G$--invariant part of the Chow group $A^i(X)$.)

The aim of this note is to find examples where conjecture \ref{theconj} is verified. The main result presents an example of dimension $n=6$ (and so $k=3$) where most of conjecture \ref{theconj} is true. The example is given by the Hilbert scheme of a certain special $K3$ surface studied by Livn\'e--Sch\"utt--Yui \cite{LSY}:

\begin{nonumbering}[=theorem \ref{main3}] Let $S_3$ be the $K3$ surface as in theorem \ref{lsythm}, and let $X$ be the Hilbert scheme $X:=(S_3)^{[3]}$ of dimension $6$. Let $G\subset \aut(X)$ be the order $3$ group of non--symplectic natural automorphisms, corresponding to the group $G_{S_3}\subset\aut(S_3)$ of definition \ref{lsy}. Then
\[     A^i_{(j)}(X)\cap A^i(X)^G =
                                                                            0 \ \ \ \hbox{if} \ (i,j)\in \  \Bigl\{ (2,2), (4,4), (3,2),(5,2),(6,2),(6,4)\Bigr\} .\\
                                                                      \]

\end{nonumbering}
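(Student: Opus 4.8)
The plan is to prove each vanishing statement by transporting the problem from the Hilbert scheme $X=(S_3)^{[3]}$ back to the surface $S_3$ itself, where I can invoke a known Bloch-type result. The starting point is that $G$ acts on $X$ through \emph{natural} automorphisms induced by $G_{S_3}\subset\aut(S_3)$, which is an order-$3$ group of non-symplectic automorphisms of a K3 surface. The first step is to recall, from the Beauville--Voisin / multiplicative Chow--K\"unneth theory for Hilbert schemes of K3 surfaces \cite{SV}, \cite{V6}, an explicit description of the bigraded pieces $A^i_{(j)}(X)$ in terms of the Chow groups of the surface $S_3$ and its self-products, together with the Nakajima-type correspondences that generate $A^\ast(X)$. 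Crucially, because the automorphisms are natural, the $G$-action on each bigraded piece is compatible with these correspondences, so I can rewrite each group $A^i_{(j)}(X)\cap A^i(X)^G$ as a $G$-invariant subquotient built from tautological classes and from $A^2_{hom}(S_3)$, $A^1(S_3)$, and so on.

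The second step is to reduce the six cases to statements about the action of $G_{S_3}$ on $A^2_{(2)}(S_3)=A^2_{hom}(S_3)$ and on the transcendental/algebraic parts of $A^1$ and $A^0$. The heaviest pieces---$(2,2)$ and $(4,4)$, and the ``top'' pieces $(5,2),(6,2),(6,4)$---should all degenerate, after applying the multiplicativity of the Chow--K\"unneth decomposition, to expressions involving $A^2_{(2)}(S_3)\cap A^2(S_3)^{G_{S_3}}$ or products thereof. The key input here is the two-dimensional case of Bloch's conjecture: for a non-symplectic automorphism $\sigma$ of a K3 surface acting by a nontrivial character on $H^{2,0}$, one has $(\sigma^\ast-\id)$, or rather the $G_{S_3}$-invariant projector, annihilating $A^2_{hom}(S_3)$ by the theorem of Voisin (following Bloch--Srinivas and the finite-dimensionality machinery). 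Since $G_{S_3}$ is cyclic of order $3$ and acts nontrivially on $H^{2,0}(S_3)$, the averaged action $\tfrac13\sum_{g}g^\ast$ kills $A^2_{hom}(S_3)$, giving $A^2_{(2)}(S_3)\cap A^2(S_3)^{G_{S_3}}=0$. I would then feed this vanishing through the correspondence description to obtain the desired vanishing for each of the listed $(i,j)$.

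The third step handles the mixed pieces $(3,2)$ and $(5,2)$, which involve $A^2_{(2)}$ tensored with tautological (algebraic, $G$-invariant) classes in complementary degree. For these I would argue that the $G$-invariant part of the product factors through $\bigl(A^2_{(2)}(S_3)\cap A^2(S_3)^{G_{S_3}}\bigr)\otimes(\text{invariant tautological classes})$, again vanishing by the surface-level result once one checks that the tautological factor contributes only $G$-invariant, hence surviving, classes while the transcendental factor is forced into the annihilated part. Here I would lean on finite-dimensionality of the motive of $S_3$ (hence of $X$) to justify that homological and rational equivalence coincide on the relevant invariant pieces.

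The main obstacle I anticipate is \textbf{not} the surface-level Bloch conjecture input---that should be available from \cite{LSY} and the non-symplectic hypothesis---but rather the bookkeeping that identifies each $A^i_{(j)}(X)^G$ precisely with the correct $G$-invariant subquotient of surface data, and the verification that no ``unexpected'' $G$-invariant transcendental classes sneak in through the cross-terms in the Nakajima correspondence (for instance, mixed contributions in degree $(6,4)$ that pair an invariant $A^2_{(2)}$ class with another such class, where $G$ could act by the \emph{trivial} character on the product even though it acts nontrivially on each factor). Controlling this character bookkeeping---ensuring that the product $H^{2,0}\otimes H^{2,0}$ does not produce a trivial summand that would obstruct vanishing in the $(4,4)$ and $(6,4)$ cases---is the delicate point, and is presumably why the theorem omits the remaining pieces such as $(4,2)$: there the character analysis may fail to force vanishing.
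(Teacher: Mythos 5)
Your overall strategy --- reduce to the surface and its self--products via the de Cataldo--Migliorini correspondences and the naturality of the $G$--action, use the non--symplectic hypothesis to kill transcendental cohomology, and invoke finite--dimensionality of the motive --- is indeed the skeleton of the paper's proof. The surface--level input you cite ($\tfrac13\sum_g g^\ast$ annihilates $A^2_{hom}(S_3)$, which the paper obtains from the Kahn--Murre--Pedrini decomposition plus the nilpotence theorem rather than by quoting a Voisin--type theorem) does suffice for the pieces $(2,2)$ and $(6,2)$. But there is a genuine gap for the remaining four pieces $(4,4)$, $(6,4)$, $(3,2)$, $(5,2)$: you correctly identify the cross--term problem (a $G$--invariant class in $A^4_{(4)}(S^2)$ need not come from $G$--invariant factors, since $\Delta^G_{S^2}=\tfrac13\sum_g\Gamma_g\times\Gamma_g$ is not the tensor product of the averaging projectors), yet you leave it unresolved and misdiagnose what is needed to resolve it. The character computation you describe ($G$ acts on $H^{2,0}\otimes H^{2,0}$ by $\nu^2\neq 1$, so the average kills $H^{4,0}(S^2)$ and $H^{0,4}(S^2)$) is only a cohomological statement; to run the nilpotence argument on Chow groups one must exhibit an \emph{algebraic cycle} $\gamma$, supported in codimension $2\times 2$ on $S^2\times S^2$, with $\Delta^G_{S^2}\circ\pi_4^{S^2}=\gamma$ in $H^8(S^2\times S^2)$. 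This is where the paper uses the two special features of $S_3$ that your proposal never invokes: $\rho$--maximality (so $T_{S_3}$ has rank $2$ and injects into $H^{2,0}\oplus H^{0,2}$, forcing $(\Delta^G_{S^2})_\ast(T\otimes T)$ into $F^2H^4(S^2)$), and the fact that $S_3$ is a Kummer surface, so the Hodge conjecture holds for $S_3\times S_3$ and the surviving $(2,2)$--classes are algebraic. Without these, the argument yields only the weaker conclusion of theorem \ref{main} (the pieces $(2,2)$ and $(n,2)$), which is exactly why the higher--dimensional generalization in the paper is restricted to those two pieces.

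A secondary but real imprecision: finite--dimensionality does not make ``homological and rational equivalence coincide on the relevant invariant pieces.'' What it gives is the nilpotence theorem for homologically (indeed numerically) trivial self--correspondences, which one applies to $\Delta^G_{S^r}\circ\pi_j^{S^r}-\gamma$; one then needs the idempotence of $\Delta^G_{S^r}\circ\pi_j^{S^r}$ (which rests on the commutation of $\Gamma_g$ with the MCK projectors, itself a nontrivial lemma using $g^\ast\oo_S=\oo_S$) and a dimension count showing that correspondences supported on $V\times W$ with $V,W$ of the appropriate codimensions act trivially on the Chow group in question. These support--and--dimension arguments are the actual engine of the proof and are absent from your outline.
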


The proof of theorem \ref{main3} is a fairly easy consequence of the fact that the surface $S_3$ (and hence the Hilbert scheme $X$) has {\em finite--dimensional motive\/} (in the sense of \cite{Kim}), and is $\rho$--maximal (in the sense of \cite{Beau2}). Yet, the implications of theorem \ref{main3} are quite striking. These implications are most conveniently presented in terms of the Chow ring of the quotient $Y=X/G$ (the variety $Y$ is a $6$--dimensional ``Calabi--Yau variety with quotient singularities''):

\begin{nonumberingc}[=corollary \ref{prod3}] Let $X$ and $G$ be as in theorem \ref{main3}, and let $Y:=X/G$. For any $r\in\NN$, let 
\[E^\ast(Y^r)\subset A^\ast(Y^r)\] 
  denote the subalgebra generated by (pullbacks of) $A^1(Y), A^2(Y), A^3(Y)$ and the diagonal $\Delta_Y\in A^6(Y\times Y)$ and the small diagonal $\Delta_Y^{sm}\in A^{12}(Y^3)$. Then the cycle class map
  \[ E^i(Y^r)\ \to\ H^{2i}(Y^r) \]
  is injective for $i\ge 6r-1$.
\end{nonumberingc}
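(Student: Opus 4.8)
The plan is to reduce the statement to the injectivity of the cycle class map on the degree-$0$ part $A^\ast_{(0)}$ of the bigrading, in top codimension. Since $A^\ast(Y^r)=A^\ast(X^r)^{G^r}$ and the multiplicative Chow--K\"unneth decomposition of $X$ (hence of $X^r$) is $G$-equivariant, it descends to an MCK decomposition of $Y^r$, so that $A^\ast(Y^r)$ is bigraded with $A^i_{(j)}(Y^r)=A^i_{(j)}(X^r)^{G^r}$. Because $S_3$ has finite-dimensional motive, so do $X=(S_3)^{[3]}$, the powers $X^r$, and the quotients $Y^r$. The strategy has two parts: first, to show that the subalgebra $E^\ast(Y^r)$ is contained in the degree-$0$ part $A^\ast_{(0)}(Y^r)$; second, to show that the cycle class map is injective on $A^i_{(0)}(Y^r)$ for $i\ge 6r-1$, which is precisely the range of $1$-cycles and $0$-cycles.

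For the first part I would check that each generator lies in bidegree $(\ast,0)$ and then invoke multiplicativity. Divisors are automatic, $A^1(Y)=A^1_{(0)}(Y)$ since $A^1_{hom}(X)=0$. By the known structure of the bigrading for Hilbert schemes of $K3$ surfaces, $A^i(X)$ is concentrated in even degrees $0\le j\le i$; thus $A^2(Y)=A^2_{(0)}(Y)\oplus A^2_{(2)}(Y)$ and $A^3(Y)=A^3_{(0)}(Y)\oplus A^3_{(2)}(Y)$, and theorem \ref{main3} forces $A^2_{(2)}(Y)=A^3_{(2)}(Y)=0$ (these are exactly the bidegrees $(2,2)$ and $(3,2)$), so $A^2(Y)$ and $A^3(Y)$ are purely of degree $0$. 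The diagonal $\Delta_Y\in A^6(Y\times Y)$ and the small diagonal $\Delta_Y^{sm}\in A^{12}(Y^3)$ lie in degree $0$ because this is the defining property of a multiplicative Chow--K\"unneth decomposition. Since products respect the bigrading ($A^i_{(j)}\cdot A^{i'}_{(j')}\subset A^{i+i'}_{(j+j')}$) and pullbacks along projections preserve it, it follows that $E^\ast(Y^r)\subseteq A^\ast_{(0)}(Y^r)$.

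For the second part, the injectivity of the cycle class map on $A^i_{(0)}(Y^r)$ for $i\ge 6r-1$ is where finite-dimensionality and $\rho$-maximality enter. For $0$-cycles the projector cutting out $A^{6r}_{(0)}(Y^r)$ factors through a Lefschetz motive, so $A^{6r}_{(0)}(Y^r)\cong\QQ$ injects into $H^{12r}(Y^r)=\QQ$. For $1$-cycles I would first show $H^{12r-2}(Y^r)$ is of Tate type. The key input is that the transcendental cohomology of $Y$ is concentrated in degree $6$: indeed $H^2(X)_{tr}^G=0$ because the non-symplectic $G$ acts on the rank-$2$ transcendental lattice of the CM, $\rho$-maximal surface $S_3$ by the primitive cube roots of unity $\zeta,\bar\zeta\neq 1$, whence $H^2(Y)$, and dually $H^{10}(Y)$, are algebraic. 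Now in the K\"unneth decomposition $H^{12r-2}(Y^r)=\bigoplus H^{d_1}(Y)\otimes\cdots\otimes H^{d_r}(Y)$ with $\sum d_a=12r-2$ the total ``deficit from the top'' $\sum(12-d_a)$ equals $2$; as $Y$ has no odd cohomology, the only possibility is one factor in $H^{10}(Y)$ and the rest in $H^{12}(Y)$, all algebraic, so $H^{12r-2}(Y^r)$ is algebraic. Finite-dimensionality then identifies the submotive $h^{12r-2}(Y^r)$ with a sum of Lefschetz motives, on which the cycle class map is injective, giving injectivity on $A^{6r-1}_{(0)}(Y^r)$. Combining the two parts yields the injectivity of $E^i(Y^r)\to H^{2i}(Y^r)$ for $i\ge 6r-1$; the remaining vanishings of theorem \ref{main3}, in bidegrees $(4,4),(5,2),(6,2),(6,4)$, are the Chow-theoretic reflection on $Y$ itself of this same CM structure.

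The main obstacle is the $1$-cycle case $i=6r-1$: one must pass from the cohomological concentration of transcendental classes to an honest statement about rational equivalence. Establishing that $A^{6r-1}_{(0)}(Y^r)$ contains no nonzero homologically trivial class requires the nilpotence property of finite-dimensional motives, together with a careful bookkeeping of the $G^r$-action on the transcendental part of $H^\ast(X^r)$ coming from the CM Hodge structure of $S_3$, in order to guarantee that the relevant piece of the motive is genuinely of Lefschetz type and not merely numerically so.
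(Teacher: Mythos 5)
Your proposal reaches the right conclusion, and its first half is essentially the paper's argument: descend the MCK decomposition to $Y$ and $Y^r$, put the generators of $E^\ast(Y^r)$ in non-positive degrees of the bigrading using theorem \ref{main3} for the bidegrees $(2,2)$ and $(3,2)$ and lemma \ref{diag} for the diagonals, and use compatibility of products and pullbacks with the bigrading. One caveat: you assert as ``known'' that $A^i(X)$ is concentrated in degrees $0\le j\le i$, hence that $A^2(Y)$ and $A^3(Y)$ are \emph{purely} of degree $0$. The paper is more cautious: it only claims $A^i(Y)=\bigoplus_{j\le 0}A^i_{(j)}(Y)$ for $i\le 3$, so that $E^\ast(Y^r)\subset\bigoplus_{j\le 0}A^\ast_{(j)}(Y^r)$, and then invokes the separately known vanishing $A^i_{(j)}=0$ for $j<0$ in the range $i\ge 6r-1$ to land in $A^i_{(0)}(Y^r)$. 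Your stronger intermediate claim is not needed, and if you cannot source it you should route around it exactly as the paper does.

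Where you genuinely diverge is the second half, the injectivity of the cycle class map on $A^i_{(0)}(Y^r)$ for $i\ge 6r-1$. You prove it by showing $H^{12r-2}(Y^r)$ is algebraic (using that $G$ kills the rank-two transcendental lattice of the $\rho$-maximal $S_3$, so $H^{10}(Y)$ is algebraic), and then using finite-dimensionality to lift the resulting homological isomorphism $h^{12r-2}(Y^r)\cong\bigoplus L^{6r-1}$ to rational equivalence. This works, but it is considerably heavier than what the paper does: the paper quotes Vial's lemma \ref{inj1}, whose proof is purely formal and uses neither finite-dimensionality, nor $\rho$-maximality, nor any CM or Hodge-theoretic input. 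The point there is that the projectors $\pi^{S^r}_{4r}$ and $\pi^{S^r}_{4r-2}$ of the product MCK decomposition are supported on $S^r\times(x\times\cdots\times x)$ and on $\bigcup_m S^r\times(x\times\cdots\times S\times\cdots\times x)$ respectively, so the identity of $A^{2r}_{(0)}(S^r)$ factors through $A^0(\mathrm{pt})$ and the identity of $A^{2r-1}_{(0)}(S^r)$ factors through $\bigoplus A^1(S)$, both of which inject into cohomology; one then transports this to $Y^r$ via the split injections of remark \ref{compat} and lemma \ref{quotientmck}. So the ``main obstacle'' you identify in your last paragraph (passing from cohomological concentration to rational equivalence for $1$-cycles) is a non-obstacle in the paper's approach, and your route, while valid here, would not generalize to Hilbert schemes of arbitrary $K3$ surfaces the way Vial's lemma does. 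The genuinely special inputs (finite-dimensionality, $\rho$-maximality) are consumed entirely by theorem \ref{main3}, which both you and the paper take as given.
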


\begin{nonumberingc}[=corollary \ref{prod1}] Let $X$ and $G$ be as in theorem \ref{main3}, and let $Y:=X/G$. Let $a\in A^i(Y)$ be a cycle with $i\not=3$. Assume $a$ is a sum of intersections of $2$ cycles of strictly positive codimension, i.e.
  \[ a\in \ima\Bigl(  A^m(Y)\otimes A^{i-m}(Y)\ \to\ A^i(Y)\Bigr)\ ,\ \ \ 0<m<i\ .\]
  Then $a$ is rationally trivial if and only if $a$ is homologically trivial.
\end{nonumberingc}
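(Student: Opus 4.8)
The plan is to push everything onto the hyperk\"ahler variety $X=(S_3)^{[3]}$ and to exploit the bigrading coming from the multiplicative Chow--K\"unneth decomposition, the guiding idea being that theorem \ref{main3} forces the $G$--invariant part of the Chow ring to be concentrated in \emph{low weight in small codimension}, and that multiplicativity then propagates this to every decomposable class. First I would reduce to $X$: since $G$ is finite and we work with $\QQ$--coefficients, the projector $\tfrac1{|G|}\sum_{g\in G}g^\ast$ identifies $A^\ast(Y)$ with $A^\ast(X)^G$ compatibly with the intersection product and with the cycle class map, so it suffices to treat a $G$--invariant class $a=\sum b\cdot c$ with $b\in A^m(X)^G$, $c\in A^{i-m}(X)^G$, $0<m<i$. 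The variety $X$ carries an MCK by \cite{SV}, \cite{V6}, respected by the natural automorphisms in $G$, so the bigrading descends to $A^i(X)^G=\bigoplus_j A^i_{(j)}(X)^G$. Each piece $A^i_{(j)}(X)$ with $j>0$ is homologically trivial by construction; hence a class is homologically trivial exactly when its weight--$0$ component vanishes, \emph{provided} the cycle class map is injective on $A^i_{(0)}(X)$. For $X$ of finite--dimensional motive this injectivity is available away from the self--dual middle degree, i.e. for $i\ne 3$, which is precisely why $i=3$ is excluded. Granting it, homological triviality of $a$ gives $a_{(0)}=0$, and it remains to kill the positive--weight components.

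The crucial step is to read off the weights that $G$--invariant classes can carry in each codimension. The nonzero graded pieces $A^i_{(j)}(X)$ occur for even $j$ with $0\le j\le i$ (the weight being governed by the number of transcendental $K3$ factors). Intersecting this list with the vanishings of theorem \ref{main3}, namely $A^2_{(2)}(X)^G=A^3_{(2)}(X)^G=A^4_{(4)}(X)^G=A^5_{(2)}(X)^G=A^6_{(2)}(X)^G=A^6_{(4)}(X)^G=0$, I find that the invariant subrings in low codimension are concentrated in low weight: $A^1(Y),A^2(Y),A^3(Y)$ lie entirely in weight $0$, while $A^4(Y)$ carries only weights $\{0,2\}$ and $A^5(Y)$ only weights $\{0,4\}$. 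This is the single consequence of theorem \ref{main3} that I feed into multiplicativity.

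Next I would run the multiplicative bigrading: the weight--$j$ part of a decomposable class is $\sum_{s+t=j}b_{(s)}c_{(t)}$, so a positive--weight component can arise only if one factor already has positive weight, hence (by the previous step) only if one factor has codimension $\ge 4$. As both factors have strictly positive codimension, this is impossible for $i=2$ and $i=4$, where every decomposable class is therefore of weight $0$ and the claim follows from the homological criterion alone. For $i=5$ the only reachable positive--weight piece is $A^5_{(2)}(X)^G$ (from $A^1\cdot A^4_{(2)}$), and for $i=6$ only $A^6_{(2)}(X)^G$ and $A^6_{(4)}(X)^G$ (from $A^2\cdot A^4_{(2)}$ and $A^1\cdot A^5_{(4)}$); all three vanish by theorem \ref{main3}. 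In particular the \emph{a priori} dangerous pieces $A^4_{(2)}(X)^G$, $A^5_{(4)}(X)^G$ and $A^6_{(6)}(X)^G$ --- which are \emph{not} covered by theorem \ref{main3} and whose invariant parts need not vanish (indeed $A^6_{(6)}(X)^G$ is expected to be all of $A^6_{(6)}(X)$ by conjecture \ref{theconj}) --- are never reached, precisely because the low--codimension invariant factors carry no positive weight. Combining with the homological criterion yields $a=\sum_{j>0}a_{(j)}=0$.

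The step I expect to be the genuine obstacle is the cohomological input used above: establishing that $A^i_{(0)}(X)$ injects into $H^{2i}(X)$, equivalently that the weight--$0$ part of the Chow ring is detected by cohomology. This is a Bloch--Beilinson--type statement, to be extracted from the finite--dimensionality of the motive of $S_3$ (and its $\rho$--maximality), and it is exactly its unavailability in the self--dual middle degree $i=3$ that dictates the hypothesis $i\ne 3$; once it is granted, the remainder is bookkeeping with the bigrading built on theorem \ref{main3}.
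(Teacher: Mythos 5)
Your weight bookkeeping is correct and coincides with the paper's: using theorem \ref{main3} one sees that $A^1(Y),A^2(Y),A^3(Y)$ carry only weights $\le 0$, that $A^4(Y)$ and $A^5(Y)$ carry at worst the extra weights $2$ resp.\ $4$, and that consequently every decomposable class in $A^i(Y)$, $i\ne 3$, has all its positive--weight components landing in pieces killed by theorem \ref{main3}. The genuine gap is in the input you defer to the end, namely the injectivity of the cycle class map on the weight--$\le 0$ part. You assert that $A^i_{(0)}(X)\hookrightarrow H^{2i}(X)$ holds ``for $i\ne 3$'' as a consequence of finite--dimensionality and $\rho$--maximality. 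This is not available, and it is not what the paper uses. For $i\ge 5$ the injectivity on $A^i_{(0)}(X)$ is lemma \ref{inj1}, which follows from the explicit supports of the projectors $\pi^{S^r}_{4r}$ and $\pi^{S^r}_{4r-2}$ and needs neither finite--dimensionality nor the group. For $i=4$, injectivity on all of $A^4_{(0)}(X)$ is a Bloch--Beilinson/Murre--type statement that is out of reach (already $A^4_{(0)}(S^3)$ receives contributions from $\pi_2^{S,tr}\times\pi_2^{S,tr}\times\pi_4^S$, which involve $A^2_{AJ}(S)\otimes A^2_{AJ}(S)$); what is actually provable, and what the paper proves as proposition \ref{inj2}, is the statement for the $G$--\emph{invariant} part only: $A^4_{(0)}(X)^G\cap A^4_{hom}(X)=0$ and $A^4_{(j)}(X)^G=0$ for $j<0$ (this second vanishing is also not automatic and is part of the same proposition, so your blanket claim that nonzero pieces occur only for $0\le j\le i$ needs justification in codimension $4$). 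The proof of proposition \ref{inj2} is where the real work sits: one shows via lemma \ref{pi4} that $\Delta^G_{S^3}\circ\pi_8^{S^3}$ is homologically supported on $(\hbox{codim.\ }2)\times(\hbox{codim.\ }4)$ --- this is where $\rho$--maximality and the algebraicity of Hodge classes on $S^2$ enter --- and then applies the nilpotence theorem to upgrade this to a rational equivalence, whence the correspondence kills $A^4_{hom}$. Since your class $a$ is $G$--invariant this weaker statement does suffice for the corollary, but it must be proved, and it cannot be proved in the non--equivariant form you state.

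Two smaller points. For $i=2$ the paper does not argue through $A^2_{(0)}$ at all: it invokes Voisin's theorem \cite[Theorem 1.4]{V12} on intersections of divisors on $S^{[n]}$, precisely because injectivity on $A^2_{(0)}(X)$ is not known either; if you want to stay inside the bigrading formalism you would again need a $G$--invariant substitute. Finally, your diagnosis of why $i=3$ is excluded (``self--dual middle degree'') points at the right place but for a slightly wrong reason: decomposable classes in $A^3(Y)$ do land in $\bigoplus_{j\le 0}A^3_{(j)}(Y)$, and the obstruction is that the equivariant injectivity argument breaks down there because a generator $h$ of $G_S$ acts on $H^{2,0}(S)^{\otimes 3}$ by $\nu^3=1$, so $\Delta^G_{S^3}\circ\pi_6^{S^3}$ retains the full transcendental part and no support estimate analogous to lemma \ref{pi4} is available.
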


This behaviour is remarkable, because $A^6(Y)$ is ``huge'' (it is not supported on any proper subvariety). In a sense, corollary \ref{prod1} is a mixture of the Beauville--Voisin conjecture (concerning the Chow ring of Hilbert schemes of $K3$ surfaces \cite[Conjecture 1.3]{V12}) on the one hand, and results
concerning $0$--cycles on certain Calabi--Yau varieties \cite{V13}, \cite{LFu}, on the other hand (cf. remark \ref{compare}).
These corollaries are easily proven; one merely exploits the good properties of multiplicative Chow--K\"unneth decompositions combined with finite--dimensionality of the motive of $X$.

We also give a partial generalization of theorem \ref{main3} to Hilbert schemes of higher dimension. This generalization concerns Hilbert schemes of the other special $K3$ surfaces $S_k$ ($k>3$) studied by Livn\'e--Sch\"utt--Yui \cite{LSY}. The surfaces $S_k$ all have finite--dimensional motive, however (apart from $k=3$) they are {\em not\/} $\rho$--maximal; for this reason, the conclusion is weaker in these cases:

\begin{nonumbering}[=theorem \ref{main}] Let $S_k$ be one of the $16$ $K3$ surfaces studied in \cite{LSY}. Let $X$ be the Hilbert scheme $X=(S_k)^{[k]}$ of dimension $n=2k$. Let $G\subset\aut(X)$ be the order $k$ group of natural automorphisms induced by the order $k$ automorphisms of $S_k$. Then
  \[ A^i_{(2)}(X)\cap A^i(X)^G =0\ \ \ \hbox{for}\ i\in\{2,n\}\ .\]
\end{nonumbering}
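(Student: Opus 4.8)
The plan is to deduce the two vanishings from a single statement on the K3 surface $S=S_k$, namely $A^2_{(2)}(S)^G=0$, and then to propagate it to $X=(S_k)^{[k]}$ via the motivic decomposition of the Hilbert scheme. The key input, and the only place where finite--dimensionality is used, is the surface statement. Let $g$ generate $G_{S_k}$. Being non--symplectic, $g$ acts on the line $H^{2,0}(S)$ by a nontrivial $k$--th root of unity, so the projector $p:=\tfrac1k\sum_{h\in G}h^\ast$ annihilates $H^{2,0}(S)$. Since the transcendental Hodge structure $H^2_{tr}(S)$ has no nonzero sub--Hodge--structure whose complexification misses the $(2,0)$--part, $p$ acts as $0$ on $H^2_{tr}(S)$. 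As $g$ preserves the (canonical) transcendental motive $t_2(S)$, the operator $p$ restricts to a homologically trivial idempotent of $t_2(S)$; because $S$ has finite--dimensional motive \cite{Kim}, this idempotent is nilpotent, hence zero. Therefore $p$ acts as $0$ on $A^\ast(t_2(S))=A^2_{hom}(S)=A^2_{(2)}(S)$, i.e. $A^2_{(2)}(S)^G=0$.

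I would then transfer this vanishing to the Hilbert scheme. For $X=(S_k)^{[k]}$ one has a multiplicative Chow--K\"unneth decomposition (\cite{SV}, \cite{V6}), refining the de Cataldo--Migliorini description of $\mathfrak h(S^{[k]})$ as a direct sum of Tate twists of symmetric summands of $\mathfrak h(S)^{\otimes m}$. Because the bigrading is additive under tensor products, and because the only tensor factor carrying positive weight is the transcendental motive $t_2(S)$ (the contributions of $A^1(S)$, of $A^2_{(0)}(S)=\QQ\,o_S$, and of the Nakajima/diagonal classes all have weight $0$), the graded piece $A^i_{(2)}(X)$ is assembled from summands in which exactly one tensor factor equals $t_2(S)$ and the remaining factors have weight $0$. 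Each such summand contributes a copy of $A^2_{(2)}(S)$. Since the automorphisms in $G$ are the natural ones induced by the diagonal action of $g$ on $S^m$, this decomposition is $G$--equivariant and $G$ acts on the distinguished transcendental factor through $g^\ast$ on $S$. Passing to $G$--invariants thus produces a direct sum of copies of $A^2_{(2)}(S)^G=0$; hence $A^i_{(2)}(X)\cap A^i(X)^G=0$, in particular for $i\in\{2,n\}$.

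The routine half is the surface statement; the genuine obstacle is the second step, namely making the decomposition of $\mathfrak h(X)$ simultaneously compatible with the multiplicative bigrading and with the natural $G$--action, and checking that in weight $2$ only a single transcendental factor occurs so that the invariants collapse to $A^2_{(2)}(S)^G$ rather than to invariants of a product $t_2(S)\otimes t_2(S)$ or worse. It is precisely this restriction to weight $2$ that keeps the bookkeeping manageable: for the cases $(i,j)=(2,2)$ and $(i,j)=(n,2)$ a unique transcendental factor appears, whereas the higher--weight pieces predicted by Conjecture \ref{theconj} would require controlling products of several transcendental factors (and this is where the failure of $\rho$--maximality for $k>3$ prevents one from reaching the stronger conclusions available in the six--dimensional case).
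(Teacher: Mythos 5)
Your overall strategy is sound and rests on the same two pillars as the paper's proof (non--symplecticity plus indecomposability of $T_S$ to kill $H^2_{tr}(S)$ under $\sum_g g^\ast$, and Kimura finite--dimensionality to upgrade this homological vanishing to a Chow--theoretic one), but you organize the argument differently, and the reorganization creates one step that you assert rather than prove. Your surface statement is fine: since $G_S$ acts trivially on $NS(S)$ and fixes $\oo_S$, the correspondence $\Delta^G_S$ commutes with $\pi_2^{S,tr}$ (this refines lemma \ref{comm} and uses the LSY/Sch\"utt hypothesis in an essential way), so $\Delta^G_S\circ\pi_2^{S,tr}$ is a homologically trivial idempotent of $t_2(S)$, hence zero by theorem \ref{nilp}, giving $A^2_{(2)}(S)^G=0$. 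The paper never isolates this statement; instead it works directly on $S^k\times S^k$: lemma \ref{pi2k} shows $\Delta^G_{S^k}\circ\pi_2^{S^k}$ is homologically equivalent to a cycle supported on $(\hbox{curve})\times(\hbox{divisor})$, and nilpotence of the motive of $S^k$ plus idempotence (lemma \ref{idemp}) then express this projector as a sum of correspondences with that small support, which annihilate $A^2_{AJ}(S^k)$ and, after transposing, $A^n(S^k)$ for dimension reasons. This sidesteps entirely the point where your argument is thin.

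That point is the sentence ``each such summand contributes a copy of $A^2_{(2)}(S)$'', together with the conclusion that the $G$--invariants collapse to a direct sum of copies of $A^2_{(2)}(S)^G$. This is a K\"unneth--type statement for Chow groups of $S^r$, and there is no K\"unneth formula for Chow groups: additivity of the bigrading under products tells you where each projector summand sends $A^i$, not that its image is a tensor product of surface--level pieces. The claim is true in the two cases you need, but for two different, case--specific reasons that must be supplied. For $i=n$, only the partition $\mu=(1^k)$ contributes and $A^{2k}(S^k)$ is generated by products of points, so $(\pi_4^{\times(j-1)}\times\pi_2\times\pi_4^{\times(k-j)})_\ast$ lands in $\oo_S^{\times(j-1)}\times A^2_{AJ}(S)\times\oo_S^{\times(k-j)}$. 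For $i=2$ one must instead use that $\pi_0^S=\oo_S\times S$ is supported over a point, so the action of $\pi_0^{\times(j-1)}\times\pi_2\times\pi_0^{\times(k-j)}$ on $A^2(S^r)$ factors through restriction to a fibre $\{\oo_S\}^{j-1}\times S\times\{\oo_S\}^{k-j}$ followed by $(\pi_2^S)_\ast$ and a pullback along the $j$--th projection, hence lies in a copy of $A^2_{(2)}(S)$. With these two observations (and the $G$--equivariance you invoke, which does hold since $g_\ast\oo_S=\oo_S$ and the natural action is diagonal), your proof closes; without them it has a genuine gap exactly at the transfer from $S$ to $S^k$. Your closing remark correctly locates why only $j=2$ and $i\in\{2,n\}$ are accessible without $\rho$--maximality, though the obstruction elsewhere is not merely bookkeeping: summands such as $\pi_2^{S,tr}\times\pi_2^{S,tr}$ genuinely occur and cannot be handled by the surface statement alone.
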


The $K3$ surfaces $S_k$ of \cite{LSY} have $k$ ranging from $3$ to $66$; the dimension $n$ in theorem \ref{main} thus ranges from $6$ to $132$. 
Theorem \ref{main} as proven below is actually more general than the above statement: theorem \ref{main} also
applies to certain of the $K3$ surfaces studied in \cite{Schu} (in particular, there exists a one--dimensional family of Hilbert schemes $X$ of dimension $8$ for which theorem \ref{main} is true).

Again, the quotient $Y:=X/G$ is a ``Calabi--Yau variety with quotient singularities'' (of dimension $n$ up to $132$) which has interesting Chow--theoretic behaviour:

\begin{nonumberingc}[=corollaries \ref{cor1} and \ref{cor2}] Let $X$ and $G$ be as in theorem \ref{main}. Let $Y:=X/G$. 

\noindent
(\rom1) Let $a\in A^{n-1}(Y)$ be a $1$--cycle which is in the image of the intersection product map
  \[ A^{i_1}(Y)\otimes A^{i_2}(Y)\otimes\cdots\otimes A^{i_r}(Y)\ \to\ A^{n-1}(Y)\ ,\]
  where all $i_j$ are $\le 2$. Then $a$ is rationally trivial if and only if $a$ is homologically trivial.

\noindent
(\rom2) Let $a\in A^n(Y)$ be a $0$--cycle which is in the image of the intersection product map
  \[ A^3(Y)\otimes A^{i_1}(Y)\otimes\cdots\otimes A^{i_r}(Y)\ \to\ A^{n}(Y)\ ,\]
   where all $i_j$ are $\le 2$. Then $a$ is rationally trivial if and only if $a$ is homologically trivial.
  \end{nonumberingc}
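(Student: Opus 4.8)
The plan is to transport the statement from the quotient $Y$ to the $G$--invariant part of the Chow ring of $X$, where the bigrading coming from the multiplicative Chow--K\"unneth decomposition is available, and then to read off both assertions by bookkeeping of bigraded degrees. First I would use that for the quotient map $\pi\colon X\to Y$ the pullback $\pi^\ast$ is a ring isomorphism $A^\ast(Y)\xrightarrow{\ \sim\ }A^\ast(X)^G$ (with $\QQ$--coefficients, $\tfrac{1}{\vert G\vert}\pi_\ast\pi^\ast=\ide$ provides the inverse), compatible with the cycle class maps; thus a class on $Y$ is homologically (resp.\ rationally) trivial if and only if its image in $A^\ast(X)^G$ is. Since the automorphisms in $G$ are natural, they respect the multiplicative Chow--K\"unneth decomposition, so $g^\ast$ preserves every piece $A^i_{(j)}(X)$ and $A^i(X)^G=\bigoplus_j A^i_{(j)}(X)^G$. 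Writing $a$ for the image of the given cycle and $a=\sum_j a_{(j)}$ for its bigraded decomposition, I would invoke the defining properties of the decomposition: the cycle class map is injective on $A^i_{(0)}(X)$, whereas $A^i_{(j)}(X)\subset A^i_{hom}(X)$ for $j>0$. Hence $a$ is homologically trivial exactly when $a_{(0)}=0$, and the content of the corollary is to show that then all remaining $a_{(j)}$ vanish as well.

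The second step is the codimension bookkeeping, and this is where theorem \ref{main} enters. For $(S_k)^{[k]}$ the bigraded pieces are concentrated in low degree: $A^1(X)=A^1_{(0)}(X)$, while codimension--$2$ and codimension--$3$ classes have components only in degrees $0$ and $2$. A $G$--invariant codimension--$2$ class therefore lies in $A^2_{(0)}(X)^G\oplus A^2_{(2)}(X)^G$, and the second summand is zero by theorem \ref{main}; consequently every $G$--invariant cycle of codimension $\le 2$ lives in bigraded degree $0$. By multiplicativity, $A^i_{(0)}(X)\cdot A^{i'}_{(0)}(X)\subset A^{i+i'}_{(0)}(X)$, so any product of such factors stays in degree $0$.

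Part (\rom1) now follows immediately: the $1$--cycle $a=\delta_1\cdots\delta_r$ with all $\delta_j$ of codimension $\le 2$ lies in $A^{n-1}_{(0)}(X)^G$, on which the cycle class map is injective, so homological triviality forces $a=0$. For part (\rom2) the $0$--cycle is $a=\gamma\cdot\delta$, where $\gamma\in A^3(X)^G$ and $\delta=\delta_1\cdots\delta_r\in A^{n-3}_{(0)}(X)^G$ by the previous step. Since $\gamma$ has components only in degrees $0$ and $2$ and $\delta$ is of degree $0$, multiplicativity confines $a$ to $A^n_{(0)}(X)^G\oplus A^n_{(2)}(X)^G$. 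Homological triviality kills the degree--$0$ part, and the degree--$2$ part vanishes by the second assertion $A^n_{(2)}(X)\cap A^n(X)^G=0$ of theorem \ref{main}. Hence $a=0$. (The reverse implication, rational triviality $\Rightarrow$ homological triviality, is of course automatic in both parts.)

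The one genuinely non--formal ingredient --- and the step I expect to be the crux --- is the input on the shape of the bigrading, namely that $G$--invariant codimension--$2$ classes have no transcendental ($j=2$) part and that codimension--$3$ classes carry no part of degree $\ge 4$. The former is precisely theorem \ref{main}; the latter rests on the explicit description of the multiplicative Chow--K\"unneth bigrading of the Hilbert scheme $(S_k)^{[k]}$ (and ultimately on the finite--dimensionality of the motive of $S_k$). Everything else is the formal interplay of the bigrading with the ring isomorphism $A^\ast(Y)\cong A^\ast(X)^G$; in particular no additional geometry of the singular quotient $Y$ is required.
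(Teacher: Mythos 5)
Your argument follows the same route as the paper's proofs of corollaries \ref{cor1} and \ref{cor2}: identify $A^\ast(Y)$ with $A^\ast(X)^G$, use theorem \ref{main} to kill the $(2)$--graded parts of invariant classes in codimension $2$ and in codimension $n$, use multiplicativity of the bigrading to confine the products to $A^\ast_{(0)}$ (plus an $A^n_{(2)}$--component in part (\rom2), again killed by theorem \ref{main}), and conclude by injectivity of the cycle class map on the $(0)$--part in top degrees. One point in your write--up needs repair. The injectivity of $A^i_{(0)}(X)\to H^{2i}(X)$ is \emph{not} a ``defining property'' of an MCK decomposition: in general it is conjectural (it is precisely the statement the paper displays with question marks in the proof of corollary \ref{prod3}), and it is only \emph{proven}, via the explicit support argument of lemma \ref{inj1}, in the range $i\ge 2k-1=n-1$. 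You invoke it only for $i=n-1$ and $i=n$, so your argument survives, but you must cite lemma \ref{inj1} rather than assert the injectivity as a formal property; as written the sentence claims something false, and this injectivity is just as much a non--formal crux as the shape of the bigrading you single out. Two smaller remarks: the fact that natural automorphisms respect the Chow--K\"unneth projectors is itself a lemma in the paper (lemmas \ref{comm} and \ref{idempx}, resting on $h^\ast(\oo_S)=\oo_S$), and the paper is more cautious about the concentration of the bigrading --- it only uses $A^i(Y)=\bigoplus_{j\le 0}A^i_{(j)}(Y)$ for $i\le 2$ together with the vanishing of the $j<0$ pieces in degrees $\ge n-1$, rather than asserting outright that classes of codimension $\le 3$ have no components in degrees $j<0$. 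With those citations supplied, your proof is correct and is essentially the paper's.
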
 

Results similar in spirit have been obtained for certain other hyperk\"ahler varieties and their Calabi--Yau quotients in \cite{EPW}, \cite{BlochHK4}.

 \vskip0.6cm

\begin{convention} In this article, the word {\sl variety\/} will refer to a reduced irreducible scheme of finite type over $\C$. A {\sl subvariety\/} is a (possibly reducible) reduced subscheme which is equidimensional. 

{\bf All Chow groups will be with rational coefficients}: we will denote by $A_j(X)$ the Chow group of $j$--dimensional cycles on $X$ with $\QQ$--coefficients; for $X$ smooth of dimension $n$ the notations $A_j(X)$ and $A^{n-j}(X)$ are used interchangeably. 


The notations $A^j_{hom}(X)$, $A^j_{AJ}(X)$ will be used to indicate the subgroups of homologically trivial, resp. Abel--Jacobi trivial cycles.
For a morphism $f\colon X\to Y$, we will write $\Gamma_f\in A_\ast(X\times Y)$ for the graph of $f$.
The contravariant category of Chow motives (i.e., pure motives with respect to rational equivalence as in \cite{Sc}, \cite{MNP}) will be denoted $\MM_{\rm rat}$.



We will write $H^j(X)$ 
to indicate singular cohomology $H^j(X,\QQ)$.

Given a group $G\subset\aut(X)$ of automorphisms of $X$, we will write $A^j(X)^G$ (and $H^j(X)^G$) for the subgroup of $A^j(X)$ (resp. $H^j(X)$) invariant under $G$.
\end{convention}

\section{Preliminaries}

\subsection{Quotient varieties}

\begin{definition} A {\em projective quotient variety\/} is a variety
  \[ Y=X/G\ ,\]
  where $X$ is a smooth projective variety and $G\subset\aut(X)$ is a finite group.
  \end{definition}
  
 \begin{proposition}[Fulton \cite{F}]\label{quot} Let $Y$ be a projective quotient variety of dimension $n$. Let $A^\ast(Y)$ denote the operational Chow cohomology ring. The natural map
   \[ A^i(Y)\ \to\ A_{n-i}(Y) \]
   is an isomorphism for all $i$.
   \end{proposition}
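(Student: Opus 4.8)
The plan is to reduce the statement to ordinary Poincaré duality on the smooth variety $X$ via the quotient morphism $\pi\colon X\to Y$, using crucially that all Chow groups carry $\QQ$--coefficients so that $\vert G\vert$ is invertible. The natural map under consideration sends an operational class $c\in A^i(Y)$ to $c\cap[Y]\in A_{n-i}(Y)$. The key point is that a nonzero multiple of this map factors through $X$: combining the operational pullback $\pi^\ast$, the cap product with $[X]$ on the smooth variety, and the proper pushforward $\pi_\ast$, the projection formula yields
\[ \pi_\ast\bigl(\pi^\ast c\cap[X]\bigr)=c\cap\pi_\ast[X]=\vert G\vert\,\bigl(c\cap[Y]\bigr)\ ,\]
because $\pi$ is generically finite of degree $\vert G\vert$ (the $G$--action being generically free), so that $\pi_\ast[X]=\vert G\vert\,[Y]$.

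It then suffices to see that each map in this factorization is an isomorphism. First, on the smooth projective $X$ the cap product $A^i(X)\to A_{n-i}(X)$, $c\mapsto c\cap[X]$, is an isomorphism, this being the standard Poincaré duality for operational Chow cohomology of smooth varieties \cite{F}; moreover it is $G$--equivariant, since $g^\ast(c\cap[X])=g^\ast c\cap[X]$ for $g\in G$, so it restricts to an isomorphism $A^i(X)^G\xrightarrow{\sim}A_{n-i}(X)^G$. Second, pushforward $\pi_\ast\colon A_{n-i}(X)^G\to A_{n-i}(Y)$ is an isomorphism rationally: it is surjective because every subvariety of $Y$ is a positive multiple of the pushforward of some cycle on $X$, which may be taken $G$--invariant after averaging, and injective on invariants because $\tfrac1{\vert G\vert}\pi^\ast$ is a left inverse, using $\pi^\ast\pi_\ast=\sum_{g\in G}g_\ast=\vert G\vert\cdot\ide$ on $A_\ast(X)^G$ together with $\pi_\ast\pi^\ast=\vert G\vert\cdot\ide$. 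Third, operational pullback $\pi^\ast\colon A^i(Y)\to A^i(X)^G$ lands in the invariants since $\pi\circ g=\pi$, and is an isomorphism whose inverse is the normalized transfer $\tfrac1{\vert G\vert}$ times the pushforward of bivariant classes along the finite map $\pi$. Assembling these three isomorphisms, the displayed identity realizes $\vert G\vert$ times the natural map $A^i(Y)\to A_{n-i}(Y)$ as a composite of isomorphisms; since we work over $\QQ$, the natural map itself is therefore an isomorphism.

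The step I expect to be the main obstacle is the construction of the transfer inverting $\pi^\ast$ on operational classes. An operational class on $Y$ is the datum of compatible actions on $A_\ast(T)$ for every $T\to Y$, so to descend a $G$--invariant class from $X$ one must base-change each such $T$ against the cover $X\to Y$, let the class act on $A_\ast(T\times_Y X)$, average the result down to $T$, and then verify that this is well defined and compatible with proper pushforward, flat pullback and intersection with Cartier divisors. This is exactly where the quotient structure is used---every test scheme over $Y$ admits a base change against $X$---and where the rational coefficients are indispensable. The cycle-level injectivity above likewise relies on having a pullback $\pi^\ast$ on $A_\ast(Y)$, which for the finite cover of a quotient variety is legitimate rationally; making both of these precise is the technical core of Fulton's argument \cite{F}.
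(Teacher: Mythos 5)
The paper offers no argument of its own here: its proof is the single line ``This is \cite[Example 17.4.10]{F}'', and your write-up is essentially a reconstruction of the proof sketched in that example --- reduction to Poincar\'e duality on the smooth cover $X$ via $\pi^\ast$, $\pi_\ast$ and the averaged transfer, everything with $\QQ$--coefficients. Your outline is correct, and the one step you rightly flag as the technical core (defining the transfer on operational classes by base--changing test schemes against $X\to Y$ and averaging, and checking its compatibilities) is precisely the content that Fulton's example supplies, so there is nothing further to compare.
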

   
   \begin{proof} This is \cite[Example 17.4.10]{F}.
      \end{proof}

\begin{remark} It follows from proposition \ref{quot} that the formalism of correspondences goes through unchanged for projective quotient varieties (this is also noted in \cite[Example 16.1.13]{F}). We can thus consider motives $(Y,p,0)\in\MM_{\rm rat}$, where $Y$ is a projective quotient variety and $p\in A^n(Y\times Y)$ is a projector. For a projective quotient variety $Y=X/G$, one readily proves (using Manin's identity principle) that there is an isomorphism
  \[  h(Y)\cong h(X)^G:=(X,\Delta^G_X,0)\ \ \ \hbox{in}\ \MM_{\rm rat}\ ,\]
  where $\Delta^G_X$ denotes the idempotent 
 \[ \Delta^G_X:=  {1\over \vert G\vert}{\sum_{g\in G}}\ \Gamma_g\ \ \ \in A^n(X\times X).\] 
 (NB: $\Delta^G_X$ is a projector on the $G$--invariant part of the Chow groups $A^\ast(X)^G$.) 
  \end{remark}

\subsection{Finite--dimensional motives}

We refer to \cite{Kim}, \cite{An}, \cite{MNP}, \cite{Iv}, \cite{J4} for basics on the notion of finite--dimensional motive. 
An essential property of varieties with finite--dimensional motive is embodied by the nilpotence theorem:

\begin{theorem}[Kimura \cite{Kim}]\label{nilp} Let $X$ be a smooth projective variety of dimension $n$ with finite--dimensional motive. Let $\Gamma\in A^n(X\times X)_{}$ be a correspondence which is numerically trivial. Then there is $N\in\NN$ such that
     \[ \Gamma^{\circ N}=0\ \ \ \ \in A^n(X\times X)_{}\ .\]
\end{theorem}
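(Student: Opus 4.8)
The plan is to reduce the statement to a property of endomorphisms in the rigid tensor category $\MM_{\rm rat}$ and then to exploit the representation theory of the symmetric groups hidden in the notion of finite dimensionality. Writing $M:=h(X)=(X,\Delta_X,0)$, one has $\operatorname{End}(M)=A^n(X\times X)$ with composition given by composition of correspondences, and the categorical trace $\operatorname{tr}\colon\operatorname{End}(M)\to\operatorname{End}(\mathbf 1)=\QQ$ computes intersection numbers: concretely $\operatorname{tr}(\Gamma\circ\Psi)$ equals the intersection pairing of $\Gamma$ with (the transpose of) $\Psi$, so that numerical triviality of $\Gamma$ is exactly the condition that $\Gamma$ lie in the trace radical
\[ \mathcal R:=\bigl\{\Gamma\in\operatorname{End}(M)\ :\ \operatorname{tr}(\Gamma\circ\Psi)=0\ \hbox{for all}\ \Psi\in\operatorname{End}(M)\bigr\}\ . \]
Since $\operatorname{tr}(fg)=\operatorname{tr}(gf)$, this $\mathcal R$ is a two--sided ideal, and the theorem is the assertion that every element of $\mathcal R$ is nilpotent (indeed, that $\mathcal R$ is a nilpotent ideal, which gives the uniform bound $N$). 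A first, free consequence of $\Gamma\in\mathcal R$ is that all its power traces vanish: taking $\Psi=\Gamma^{\circ(k-1)}$ yields $\operatorname{tr}(\Gamma^{\circ k})=0$ for every $k\ge 1$.

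The heart of the matter is a Cayley--Hamilton phenomenon in tensor categories, and this is where finite dimensionality enters. By hypothesis $M=M^+\oplus M^-$, with $M^+$ evenly and $M^-$ oddly finite--dimensional, say $\wedge^{N+1}M^+=0$ and $\operatorname{Sym}^{N+1}M^-=0$. For the even part one shows, using that the sign idempotent $\tfrac1{(N+1)!}\sum_{\sigma\in\Sy_{N+1}}\operatorname{sgn}(\sigma)\,\sigma$ acting on $(M^+)^{\otimes(N+1)}$ is zero, that every $f\in\operatorname{End}(M^+)$ satisfies its characteristic polynomial
\[ f^{N+1}-\lambda_1 f^{N}+\lambda_2 f^{N-1}-\cdots+(-1)^{N+1}\lambda_{N+1}\,\ide =0\ ,\qquad \lambda_k:=\operatorname{tr}(\wedge^k f)\in\QQ\ . \]
Newton's identities express the $\lambda_k$ as universal polynomials in the power traces $\operatorname{tr}(f^j)$; hence if all power traces of $f$ vanish, all $\lambda_k$ ($k\ge1$) vanish and the relation collapses to $f^{N+1}=0$. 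The oddly finite--dimensional summand is treated dually, with $\operatorname{Sym}$ in place of $\wedge$ and the complete homogeneous symmetric traces $\operatorname{tr}(\operatorname{Sym}^k f)$ in place of the $\lambda_k$; again vanishing of all power traces forces nilpotence.

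It then remains to glue the two summands and to control the off--diagonal corners of an endomorphism $f=\left(\begin{smallmatrix} a&b\\ c&d\end{smallmatrix}\right)$ of $M^+\oplus M^-$. The structural input is that an evenly and an oddly finite--dimensional object share no common direct factor, so that the composites $bc\in\operatorname{End}(M^-)$ and $cb\in\operatorname{End}(M^+)$ are automatically nilpotent; this shows the off--diagonal morphisms generate a nilpotent ideal, which combined with the diagonal analysis yields that $\mathcal R$ is nilpotent, with index bounded in terms of $N$ alone. I expect this last gluing step to be the main obstacle: the Cayley--Hamilton relation on each summand is clean, but propagating it to a single \emph{uniform} nilpotence statement for $\mathcal R$ on $M=M^+\oplus M^-$ requires care with the interaction of the even and odd parts, and is exactly where the Kimura--O'Sullivan sign conventions (together with the semisimplicity of numerical motives, which identifies $\operatorname{End}(M)/\mathcal R$ with a finite--dimensional semisimple $\QQ$--algebra, so that $\mathcal R$ is precisely the radical one must prove nilpotent) do the real work \cite{Kim}, \cite{An}.
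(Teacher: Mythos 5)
This theorem is imported into the paper as a black box from \cite{Kim}; the paper contains no proof of it, so there is nothing internal to compare your argument against. Your sketch is, in substance, Kimura's own proof (as also presented in \cite{An}): the identification of numerical triviality with membership in the trace radical of $\operatorname{End}(h(X))$, the tensor--categorical Cayley--Hamilton identity coming from $\wedge^{N+1}M^+=0$ and $\operatorname{Sym}^{N+1}M^-=0$ combined with Newton's identities in characteristic zero, and the even/odd interaction lemma forcing $bc$ and $cb$ to be nilpotent. The one step you rightly flag as delicate --- passing from nilpotence of the diagonal and off--diagonal pieces separately to nilpotence of the whole, since a sum of two non--commuting nilpotents need not be nilpotent --- is indeed where the remaining work lies (one uses that a nil ideal of bounded nilpotence index in a $\QQ$--algebra is nilpotent, so that one may argue modulo the off--diagonal ideal), but your outline names the correct ingredients for it.
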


 Actually, the nilpotence property (for all powers of $X$) could serve as an alternative definition of finite--dimensional motive, as shown by a result of Jannsen \cite[Corollary 3.9]{J4}.
   Conjecturally, all smooth projective varieties have finite--dimensional motive \cite{Kim}. We are still far from knowing this, but at least there are quite a few non--trivial examples:
 
\begin{remark} 
The following varieties have finite--dimensional motive: abelian varieties, varieties dominated by products of curves \cite{Kim}, $K3$ surfaces with Picard number $19$ or $20$ \cite{P}, surfaces not of general type with $p_g=0$ \cite[Theorem 2.11]{GP}, certain surfaces of general type with $p_g=0$ \cite{GP}, \cite{PW}, \cite{V8}, Hilbert schemes of surfaces known to have finite--dimensional motive \cite{CM}, generalized Kummer varieties \cite[Remark 2.9(\rom2)]{Xu} (an alternative proof is contained in \cite{FTV}),
 threefolds with nef tangent bundle \cite{Iy} (an alternative proof is given in \cite[Example 3.16]{V3}), fourfolds with nef tangent bundle \cite{Iy2}, certain threefolds of general type \cite[Section 8]{V5}, varieties of dimension $\le 3$ rationally dominated by products of curves \cite[Example 3.15]{V3}, varieties $X$ with $A^i_{AJ}(X)_{}=0$ for all $i$ \cite[Theorem 4]{V2}, products of varieties with finite--dimensional motive \cite{Kim}.
\end{remark}

\begin{remark}
It is an embarassing fact that up till now, all examples of finite-dimensional motives happen to lie in the tensor subcategory generated by Chow motives of curves, i.e. they are ``motives of abelian type'' in the sense of \cite{V3}. On the other hand, there exist many motives that lie outside this subcategory, e.g. the motive of a very general quintic hypersurface in $\PP^3$ \cite[7.6]{D}.
\end{remark}

\subsection{MCK decomposition}

\begin{definition}[Murre \cite{Mur}] Let $X$ be a projective quotient variety of dimension $n$. We say that $X$ has a {\em CK decomposition\/} if there exists a decomposition of the diagonal
   \[ \Delta_X= \pi_0+ \pi_1+\cdots +\pi_{2n}\ \ \ \hbox{in}\ A^n(X\times X)\ ,\]
  such that the $\pi_i$ are mutually orthogonal idempotents and $(\pi_i)_\ast H^\ast(X)= H^i(X)$.
  
  (NB: ``CK decomposition'' is shorthand for ``Chow--K\"unneth decomposition''.)
\end{definition}

\begin{remark} The existence of a CK decomposition for any smooth projective variety is part of Murre's conjectures \cite{Mur}, \cite{J2}. 
\end{remark}

\begin{definition}[Shen--Vial \cite{SV}] Let $X$ be a projective quotient variety of dimension $n$. Let $\Delta_X^{sm}\in A^{2n}(X\times X\times X)$ be the class of the small diagonal
  \[ \Delta_X^{sm}:=\bigl\{ (x,x,x)\ \vert\ x\in X\bigr\}\ \subset\ X\times X\times X\ .\]
  An MCK decomposition is a CK decomposition $\{\pi_i\}$ of $X$ that is {\em multiplicative\/}, i.e. it satisfies
  \[ \pi_k\circ \Delta_X^{sm}\circ (\pi_i\times \pi_j)=0\ \ \ \hbox{in}\ A^{2n}(X\times X\times X)\ \ \ \hbox{for\ all\ }i+j\not=k\ .\]
  
 (NB: ``MCK decomposition'' is shorthand for ``multiplicative Chow--K\"unneth decomposition''.) 
  \end{definition}
  
  \begin{remark} The small diagonal (seen as a correspondence from $X\times X$ to $X$) induces the {\em multiplication morphism\/}
    \[ \Delta_X^{sm}\colon\ \  h(X)\otimes h(X)\ \to\ h(X)\ \ \ \hbox{in}\ \MM_{\rm rat}\ .\]
 Suppose $X$ has a CK decomposition
  \[ h(X)=\bigoplus_{i=0}^{2n} h^i(X)\ \ \ \hbox{in}\ \MM_{\rm rat}\ .\]
  By definition, this decomposition is multiplicative if for any $i,j$ the composition
  \[ h^i(X)\otimes h^j(X)\ \to\ h(X)\otimes h(X)\ \xrightarrow{\Delta_X^{sm}}\ h(X)\ \ \ \hbox{in}\ \MM_{\rm rat}\]
  factors through $h^{i+j}(X)$.
  It follows that if $X$ has an MCK decomposition, then setting
    \[ A^i_{(j)}(X):= (\pi^X_{2i-j})_\ast A^i(X) \ ,\]
    one obtains a bigraded ring structure on the Chow ring: that is, the intersection product sends $A^i_{(j)}(X)\otimes A^{i^\prime}_{(j^\prime)}(X) $ to  $A^{i+i^\prime}_{(j+j^\prime)}(X)$.
      
  The property of having an MCK decomposition is severely restrictive, and is closely related to Beauville's ``weak splitting property'' \cite{Beau3}. For more ample discussion, and examples of varieties with an MCK decomposition, we refer to \cite[Section 8]{SV}, as well as \cite{V6}, \cite{SV2}, \cite{FTV}, \cite{EPW}.
    \end{remark}
    
\begin{theorem}[Vial \cite{V6}]\label{hilbk} Let $S$ be an algebraic $K3$ surface, and let $X=S^{[k]}$ be the Hilbert scheme of length $k$ subschemes of $S$. Then $X$ has a self--dual MCK decomposition.
\end{theorem}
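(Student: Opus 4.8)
The plan is to build the MCK decomposition of $X=S^{[k]}$ out of the one on the surface $S$, propagating it first to symmetric products and then transporting it to the Hilbert scheme via the motivic decomposition of De Cataldo and Migliorini. First I would record the base case. A $K3$ surface $S$ carries a self--dual CK decomposition with $\pi_0^S=\oo_S\times S$, $\pi_4^S=S\times\oo_S$ (where $\oo_S\in A^2(S)$ is the Beauville--Voisin zero--cycle) and $\pi_2^S=\Delta_S-\pi_0^S-\pi_4^S$; here $\pi_0^S$ and $\pi_4^S$ are mutually transpose and $\pi_2^S$ is symmetric, so the decomposition is self--dual. That this CK decomposition is multiplicative is exactly the content of the Beauville--Voisin relations: the vanishing $\pi_c^S\circ\Delta_S^{sm}\circ(\pi_a^S\times\pi_b^S)=0$ for $a+b\neq c$ translates into the statement that intersections of divisors and the second Chern class of $S$ are proportional to $\oo_S$ in $A^2(S)$. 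Thus $S$ has a self--dual MCK decomposition.

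Next I would propagate this upward. Since MCK decompositions are stable under products (Shen--Vial \cite{SV}), the power $S^n$ carries a self--dual MCK decomposition obtained from the box products $\pi^S_{i_1}\times\cdots\times\pi^S_{i_n}$ regrouped by total degree. This decomposition is manifestly equivariant for the permutation action of $\Sy_n$, so it descends to the symmetric quotient: using the quotient--motive formalism of Proposition \ref{quot}, the invariant projectors $\Delta^{\Sy_n}_{S^n}\circ\pi_i\circ\Delta^{\Sy_n}_{S^n}$ furnish a self--dual MCK decomposition on each symmetric product $S^{(n)}$, and more generally on any product $S^{(\lambda)}=\prod_i S^{(a_i)}$ attached to a partition $\lambda=(1^{a_1}2^{a_2}\cdots)$ of $k$. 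I would then transport the structure to the Hilbert scheme through the De Cataldo--Migliorini isomorphism
\[ h(S^{[k]})\;\cong\;\bigoplus_{\lambda\vdash k} h\bigl(S^{(\lambda)}\bigr)\bigl(-(k-\ell(\lambda))\bigr)\qquad \hbox{in }\MM_{\rm rat}\ ,\]
where $\ell(\lambda)$ is the number of parts. This isomorphism is realized by explicit correspondences supported on incidence loci and respects the cohomological grading, so summing the (twisted) graded pieces of the MCK decompositions of the various $S^{(\lambda)}$ yields a self--dual CK decomposition $\{\pi^X_i\}$ of $X=S^{[k]}$.

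The main obstacle, and the technical heart of the argument, is to verify that this induced CK decomposition is \emph{multiplicative}, i.e. that $\pi^X_c\circ\Delta_X^{sm}\circ(\pi^X_a\times\pi^X_b)=0$ whenever $a+b\neq c$. The difficulty is that the De Cataldo--Migliorini isomorphism is not a priori a ring isomorphism: the intersection product on $A^\ast(S^{[k]})$ does not match the product structures on the $S^{(\lambda)}$ in any naive way, and one must instead analyze the induced product through the incidence correspondences that glue the various strata. My plan is to split the verification into two kinds of contributions. The ``diagonal'' contributions, coming from a single partition $\lambda$, are controlled by the already--established MCK property of $S^{(\lambda)}$. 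The ``cross'' contributions, mixing different partitions, should be expressible through correspondences built out of small diagonals and the class $\oo_S$ on the surface $S$, so that their grading behaviour is again dictated by the Beauville--Voisin relations on $S$. In this way the entire multiplicativity statement on $X$ would bottom out at the multiplicativity of the MCK decomposition of $S$. Carrying out this reduction precisely --- matching the geometry of the incidence loci against the bigrading and keeping careful track of the Tate twists --- is where I expect the real work to lie.
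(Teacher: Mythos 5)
Your proposal follows essentially the same route as the paper: an MCK decomposition on $S$ coming from the Beauville--Voisin class, the product MCK decomposition on $S^r$, descent to the symmetric products $S^{(\mu)}$ by $\Sy_r$--invariance, and transport to $S^{[k]}$ via the de Cataldo--Migliorini correspondences (this is exactly formula (\ref{defv})), with self--duality read off from the construction. Note that the paper does not itself carry out the multiplicativity verification you rightly single out as the technical heart --- it simply cites \cite[Theorem 1]{V6} for that step --- so your outline matches the paper's treatment, with the hard part deferred in both cases.
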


\begin{proof} This is \cite[Theorem 1]{V6}. For later use, we briefly review the construction. First, one takes an MCK decomposition $\{\pi^S_i\}$ for $S$ (this exists, thanks to \cite{SV}). Taking products, this induces an MCK decomposition $\{ \pi^{S^r}_i\}$ for $S^r$, $r\in\NN$. This product MCK decomposition is invariant under the action of the symmetric group $\Sy_r$, and hence it induces an MCK decomposition
$\{ \pi_i^{S^{(r)}}\}$ for the symmetric products $S^{(r)}$, $r\in\NN$.
There is the isomorphism of de Cataldo--Migliorini \cite{CM}
  \[  \bigoplus_{\mu\in \Bb(k)} ({}^t \hat{\Gamma}_\mu)_\ast \colon\ \ \ A^i(X)\ \xrightarrow{\cong}\ \bigoplus_{\mu\in \Bb(k)}\ A^{i+l(\mu)-k}(S^{(\mu)})\ ,\]
  where $\Bb(k)$ is the set of partitions of $k$, $l(\mu)$ is the length of the partition $\mu$, and $S^{(\mu)}=S^{l(\mu)}/ {\Sy_{l(\mu)}}$, and ${}^t \hat{\Gamma}_\mu$ is a correspondence in $A^{k+l(\mu)}(S^{[k]}\times S^{(\mu)})$. Using this isomorphism, Vial defines \cite[Equation (4)]{V6} a natural CK decomposition for $X$, by setting
  \begin{equation}\label{defv} \pi_i^X:= {\displaystyle\sum_{\mu\in \Bb(k)}} \ {1\over m_\mu} \hat{\Gamma}_\mu \circ \pi^{S^{(\mu)}}_{i-2k+2l(\mu)}\circ {}^t \hat{\Gamma}_\mu\  , \end{equation}
  where the $m_\mu$ are rational numbers coming from the de Cataldo--Migliorini isomorphism.
  The $\{\pi^X_i\}$ of definition (\ref{defv}) are proven to be an MCK decomposition.
  
The self--duality of the $\{ \pi^X_i\}$ is apparent from definition (\ref{defv}).  
 
 \end{proof}

\begin{remark}\label{compat} It follows from definition (\ref{defv}) that the de Cataldo--Migliorini isomorphism is compatible with the bigrading of the Chow ring, in the sense that there are induced isomorphisms
  \[   \bigoplus_{\mu\in \Bb(k)} ({}^t \hat{\Gamma}_\mu)_\ast \colon\ \ \ A^i_{(j)}(X)\ \xrightarrow{\cong}\ \bigoplus_{\mu\in \Bb(k)}\ A^{i+l(\mu)-k}_{(j)}(S^{(\mu)})\ .\]
  In particular, there are split injections
   \[   \bigoplus_{\mu\in \Bb(k)} ({}^t {\Gamma}_\mu)_\ast \colon\ \ \ A^i_{(j)}(X)\ \xrightarrow{\cong}\ \bigoplus_{\mu\in \Bb(k)}\ A^{i+l(\mu)-k}_{(j)}(S^{\mu})\ .\]  
  \end{remark}

\begin{lemma}[Shen--Vial]\label{diag} Let $X$ be a projective quotient variety of dimension $n$, and suppose $X$ has a self--dual MCK decomposition. Then
  \[ \begin{split} &\Delta_X\ \ \in A^n_{(0)}(X\times X)\ ,\\
                         &\Delta_X^{sm}\ \ \in A^{2n}_{(0)}(X\times X\times X)\ .\\
                \end{split} \]        
\end{lemma}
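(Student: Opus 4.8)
The plan is to reduce both assertions to the self--duality relations ${}^t\pi^X_a=\pi^X_{2n-a}$ together with the orthogonality of the $\pi^X_i$ (for $\Delta_X$) and the multiplicativity relation (for $\Delta^{sm}_X$). Throughout I would work with the induced product Chow--K\"unneth decompositions, whose projectors are $\pi^{X\times X}_m=\sum_{a+b=m}\pi^X_a\times\pi^X_b$ on $X\times X$ and $\pi^{X^3}_m=\sum_{a+b+c=m}\pi^X_a\times\pi^X_b\times\pi^X_c$ on $X\times X\times X$, and I recall the convention $A^i_{(j)}=(\pi_{2i-j})_\ast A^i$. The only non--formal ingredient is the standard identity governing how an external product of correspondences acts on a cycle read as a correspondence: for $\gamma\in A^\ast(X\times X)$ one has $(\alpha\times\beta)_\ast\gamma=\beta\circ\gamma\circ{}^t\alpha$, and analogously for a cycle in $X^3$ read as a correspondence from $X\times X$ to $X$.

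For the diagonal, I would compute each summand of $(\pi^{X\times X}_m)_\ast\Delta_X$ via this formula:
\[ (\pi^X_a\times\pi^X_b)_\ast\Delta_X=\pi^X_b\circ\Delta_X\circ{}^t\pi^X_a=\pi^X_b\circ\pi^X_{2n-a}\ , \]
where the last equality uses self--duality. As the $\pi^X_i$ are mutually orthogonal idempotents, this vanishes unless $b=2n-a$, i.e.\ unless $a+b=2n$. Hence $(\pi^{X\times X}_m)_\ast\Delta_X=0$ for $m\ne 2n$; since $\sum_m\pi^{X\times X}_m=\Delta_{X\times X}$ acts as the identity on $A^\ast(X\times X)$, this forces $(\pi^{X\times X}_{2n})_\ast\Delta_X=\Delta_X$. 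Because $2\cdot n-0=2n$, this says precisely $\Delta_X\in A^n_{(0)}(X\times X)$.

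For the small diagonal I would rerun the argument, now reading $\Delta^{sm}_X\in A^{2n}(X^3)$ as the multiplication correspondence from $X\times X$ (the first two factors) to $X$ (the third). The action formula and self--duality give
\[ (\pi^X_a\times\pi^X_b\times\pi^X_c)_\ast\Delta^{sm}_X=\pi^X_c\circ\Delta^{sm}_X\circ(\pi^X_{2n-a}\times\pi^X_{2n-b})\ . \]
This is exactly the expression appearing in the multiplicativity relation, which annihilates it unless $c=(2n-a)+(2n-b)$, i.e.\ unless $a+b+c=4n$. Summing over partitions of $m$ yields $(\pi^{X^3}_m)_\ast\Delta^{sm}_X=0$ for $m\ne 4n$, and the identity $\sum_m\pi^{X^3}_m=\Delta_{X^3}$ then gives $(\pi^{X^3}_{4n})_\ast\Delta^{sm}_X=\Delta^{sm}_X$. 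Since $2\cdot 2n-0=4n$, this is the statement $\Delta^{sm}_X\in A^{2n}_{(0)}(X^3)$.

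The computation is entirely formal once the action formula is available; the single point needing care is the bookkeeping in the last display, namely recognizing that after self--duality converts the two transposed source projectors into $\pi^X_{2n-a}\times\pi^X_{2n-b}$, the expression matches verbatim the left--hand side of the multiplicativity relation $\pi_k\circ\Delta^{sm}_X\circ(\pi_i\times\pi_j)=0$ for $i+j\ne k$. I expect this index--matching, rather than any geometric input, to be the only place where an error could plausibly slip in.
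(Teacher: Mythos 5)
Your proof is correct and takes essentially the same route as the paper: for $\Delta_X$ the paper performs the identical computation via Lieberman's lemma, self--duality and orthogonality (it shows directly that $(\pi^{X\times X}_{2n})_\ast\Delta_X=\Delta_X$, whereas you show the complementary projectors annihilate $\Delta_X$ --- the same calculation organized the other way around). For $\Delta_X^{sm}$ the paper simply cites Shen--Vial, and the argument you supply from the multiplicativity relation is the standard one behind that citation.
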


\begin{proof} The first statement follows from \cite[Lemma 1.4]{SV2} when $X$ is smooth. The same argument works for projective quotient varieties; the point is just that
  \[ \begin{split} \Delta_X &= {\displaystyle\sum_{i=0}^{2n}}\pi_{i}^X =   {\displaystyle\sum_{i=0}^{2n}}\pi_{i}^X\circ \pi_i^X\\
                                        &=   {\displaystyle\sum_{i=0}^{2n}}({}^t\pi_{i}^X \times  \pi_i^X)_\ast \Delta_X\\
                                        &=  {\displaystyle\sum_{i=0}^{2n}}(\pi_{2n-i}^X \times  \pi_i^X)_\ast \Delta_X\\
                                        &=  (\pi_{2n}^{X\times X})_\ast \Delta_X\ \ \ \in A^n_{(0)}(X\times X)\ .\\
                                    \end{split}\]
                    (Here, the second line follows from Lieberman's lemma \cite[Lemma 3.3]{V3}, and the last line is the fact that the product of $2$ MCK decompositions is MCK.)
                    
           The second statement is proven for smooth $X$ in \cite[Proposition 8.4]{SV}; the same argument works for projective quotient varieties.                             
       \end{proof}

\subsection{Birational invariance}

\begin{proposition}[Rie\ss \cite{Rie}, Vial \cite{V6}]\label{birat} Let $X$ and $X^\prime$ be birational hyperk\"ahler varieties. Assume $X$ has an MCK decomposition. Then also $X^\prime$ has an MCK decomposition, and there are natural isomorphisms
  \[ A^i_{(j)}(X)\cong A^i_{(j)}(X^\prime)\ \ \ \hbox{for\ all\ }i,j\ .\]
\end{proposition}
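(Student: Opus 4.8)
The plan is to transport the MCK decomposition of $X$ to $X'$ along an isomorphism of Chow motives that respects the multiplicative structure. The essential input is the theorem of Rie\ss\ \cite{Rie}: for birational hyperk\"ahler varieties $X,X'$ of dimension $n$, there is a correspondence $\Gamma\in A^n(X\times X')$ inducing an isomorphism $h(X)\xrightarrow{\cong}h(X')$ in $\MM_{\rm rat}$ which is, moreover, an isomorphism of algebra objects — that is, $\Gamma$ is compatible with the multiplication morphisms in the sense that $\Delta_{X'}^{sm}\circ(\Gamma\times\Gamma)=\Gamma\circ\Delta_X^{sm}$ as morphisms $h(X)\otimes h(X)\to h(X')$. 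Let $\Psi\in A^n(X'\times X)$ denote the inverse correspondence, so that $\Psi\circ\Gamma=\Delta_X$ and $\Gamma\circ\Psi=\Delta_{X'}$.

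First I would define transported projectors $\pi_i^{X'}:=\Gamma\circ\pi_i^X\circ\Psi$, where $\{\pi_i^X\}$ is the given MCK decomposition. That these are mutually orthogonal idempotents summing to $\Delta_{X'}$ is immediate from $\Psi\circ\Gamma=\Delta_X$ and $\Gamma\circ\Psi=\Delta_{X'}$. For the K\"unneth property I would use that $\Gamma$ induces an isomorphism on cohomology preserving the cohomological grading: birational hyperk\"ahler varieties are deformation equivalent by Huybrechts, so their graded Hodge structures agree, and the class of $\Gamma$ realizes this graded isomorphism. Hence $(\pi_i^{X'})_\ast H^\ast(X')=H^i(X')$, and $\{\pi_i^{X'}\}$ is a CK decomposition of $X'$.

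Next I would verify multiplicativity by conjugating the corresponding identity on $X$. For $i+j\not=k$, using $(\pi_i^{X'}\times\pi_j^{X'})=(\Gamma\times\Gamma)\circ(\pi_i^X\times\pi_j^X)\circ(\Psi\times\Psi)$ together with the compatibility $\Delta_{X'}^{sm}\circ(\Gamma\times\Gamma)=\Gamma\circ\Delta_X^{sm}$ and the relation $\Psi\circ\Gamma=\Delta_X$, one computes
\[ \pi_k^{X'}\circ\Delta_{X'}^{sm}\circ(\pi_i^{X'}\times\pi_j^{X'})=\Gamma\circ\bigl(\pi_k^X\circ\Delta_X^{sm}\circ(\pi_i^X\times\pi_j^X)\bigr)\circ(\Psi\times\Psi)\ , \]
and the bracketed term vanishes because $\{\pi_i^X\}$ is MCK. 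Thus $\{\pi_i^{X'}\}$ is an MCK decomposition of $X'$. Finally, since $(\pi_{2i-j}^{X'})_\ast=\Gamma_\ast\circ(\pi_{2i-j}^X)_\ast\circ\Psi_\ast$, the map $\Gamma_\ast$ restricts to an isomorphism $A^i_{(j)}(X)\xrightarrow{\cong}A^i_{(j)}(X')$ with inverse $\Psi_\ast$, giving the asserted graded isomorphisms.

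The main obstacle is entirely concentrated in the input from \cite{Rie}, \cite{V6}: the delicate point is not merely that $\Gamma_\ast$ be a ring isomorphism between the Chow groups of $X$ and $X'$, but the stronger motivic identity $\Delta_{X'}^{sm}\circ(\Gamma\times\Gamma)=\Gamma\circ\Delta_X^{sm}$, which encodes compatibility with the product on all powers and is precisely what makes the conjugation argument above close up. Establishing this identity is the heart of Rie\ss's analysis — resolving the birational map through a common resolution and controlling the class of the closure of the graph — and once it is granted, the transfer of the MCK decomposition and the resulting isomorphisms of bigraded pieces are formal.
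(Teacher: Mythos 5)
Your argument is correct and is essentially the paper's own proof: the paper simply cites Rie\ss's theorem that birational hyperk\"ahler varieties have isomorphic Chow motives \emph{as algebra objects} and refers to \cite{SV} and \cite{BlochHK4} for the formal conjugation of projectors, which is exactly the computation you carry out. Your identification of the algebra-compatibility $\Delta_{X'}^{sm}\circ(\Gamma\times\Gamma)=\Gamma\circ\Delta_X^{sm}$ as the sole non-formal input is precisely where the paper places the weight as well.
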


\begin{proof} As noted by Vial \cite[Introduction]{V6}, this is a consequence of Rie\ss's result that $X$ and $X^\prime$ have isomorphic Chow motive (as algebras in the category of Chow motives). For more details, cf. \cite[Section 6]{SV} or \cite[Lemma 2.8]{BlochHK4}.
\end{proof}

\subsection{A commutativity lemma}

\begin{lemma}\label{comm} Let $S$ be an algebraic $K3$ surface, and let $\{\pi_i^S\}$ be the MCK decomposition as above. Let $h\in\aut(S)$.
Then
  \[ \Gamma_h\circ \pi_i^S=\pi_i^S\circ \Gamma_h\ \ \ \hbox{in}\ A^2(S\times S)\ \ \ \forall i\ .\]
\end{lemma}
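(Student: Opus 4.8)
The plan is to exploit the fact that the MCK decomposition ``as above'' is the completely explicit one of Shen--Vial \cite{SV}, and then to check the commutation one projector at a time. Recall that for a K3 surface $S$ this decomposition is $\pi_0^S=\oo_S\times S$, $\pi_4^S=S\times\oo_S$, and $\pi_2^S=\Delta_S-\pi_0^S-\pi_4^S$, with $\pi_1^S=\pi_3^S=0$ (a K3 surface has no odd cohomology); here $\oo_S\in A^2(S)$ is the Beauville--Voisin zero-cycle. Since $\Gamma_h\circ\Delta_S=\Gamma_h=\Delta_S\circ\Gamma_h$ and $\pi_2^S$ is a $\QQ$--linear combination of $\Delta_S$, $\pi_0^S$, $\pi_4^S$, the statement for $\pi_2^S$ reduces to the statements for $\pi_0^S$ and $\pi_4^S$; the vanishing pieces $\pi_1^S,\pi_3^S$ are trivial. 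So everything comes down to showing that $\Gamma_h$ commutes with $\pi_0^S$ and with $\pi_4^S$.

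The one geometric input I would isolate is that every automorphism of a K3 surface fixes the Beauville--Voisin class, i.e. $h^\ast\oo_S=h_\ast\oo_S=\oo_S$ in $A^2(S)$. This follows from the Beauville--Voisin description of $\oo_S$ as $\tfrac1{24}c_2(S)$ together with the functoriality of Chern classes, $h^\ast c_2(S)=c_2(h^\ast T_S)=c_2(T_S)=c_2(S)$, and $h_\ast=(h^{-1})^\ast$. Trivially one also has $h_\ast[S]=h^\ast[S]=[S]$ for the fundamental class, as $h$ is an automorphism.

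With these two facts the computation is immediate, using the standard formulas $\Gamma_h\circ(a\times b)=a\times h_\ast b$ and $(a\times b)\circ\Gamma_h=h^\ast a\times b$ for a product cycle $a\times b\in A^\ast(S\times S)$. Indeed $\Gamma_h\circ\pi_0^S=\oo_S\times h_\ast[S]=\oo_S\times[S]=\pi_0^S$ while $\pi_0^S\circ\Gamma_h=h^\ast\oo_S\times[S]=\oo_S\times[S]=\pi_0^S$, so $\Gamma_h$ commutes with $\pi_0^S$; the computation for $\pi_4^S$ is identical with the two factors exchanged, using $h_\ast\oo_S=\oo_S$. Combined with the reduction above, this yields $\Gamma_h\circ\pi_i^S=\pi_i^S\circ\Gamma_h$ for all $i$. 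I do not expect a genuine obstacle here: the only points requiring care are keeping the composition conventions consistent and remembering that the relevant decomposition is the explicit Beauville--Voisin one, for which the $\aut(S)$--invariance of $\oo_S$ does all the work.
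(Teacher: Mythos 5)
Your proof is correct and follows essentially the same route as the paper: reduce to $\pi_0^S$ and $\pi_4^S$ (with $\pi_2^S$ handled by linearity via $\Delta_S$), and conclude from $h^\ast\oo_S=\oo_S$ together with the product structure of these projectors. The only cosmetic difference is that you derive $h^\ast\oo_S=\oo_S$ from the identity $\oo_S=\tfrac1{24}c_2(S)$ and functoriality of Chern classes, whereas the paper uses the other Beauville--Voisin fact that $\oo_S$ spans the image of $A^1(S)\otimes A^1(S)\to A^2(S)$; both inputs come from the same source and are equally valid.
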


\begin{proof} It suffices to prove this for $i=0$. Indeed, by definition of $\{\pi_i^S\}$ we have
  \[ \begin{split} \pi_4^S&:={}^t \pi_0^S\ \ \ \hbox{in}\ A^2(S\times S)\ ,\\
                      \pi_2^S&:=\Delta_S-\pi_0^S-\pi_4^S\ .\\
                      \end{split}\]
        Supposing the lemma holds for $i=0$, by taking transpose correspondences we get an equality
        \[ \Gamma_{h^{-1}}\circ \pi_4^S= \pi_4^S\circ \Gamma_{h^{-1}}\ \ \ \hbox{in}\ A^2(S\times S)\ .\]      
        Composing on both sides with $\Gamma_h$, we get
        \[ \pi_4^S\circ \Gamma_h=\Gamma_h\circ \pi_4^S\ \ \ \hbox{in}\ A^2(S\times S)\ .\]      
        Next, since obviously the diagonal $\Delta_S$ commutes with $\Gamma_h$, we also get
        \[  \Gamma_h\circ \pi_2^S=   \Gamma_h\circ (\Delta_S-\pi_0^S-\pi_4^S)=  (\Delta_S-\pi_0^S-\pi_4^S)\circ \Gamma_h=   \pi_2^S\circ \Gamma_h\ \ \ \hbox{in}\ A^2(S\times S)      \ .\]
        
     It remains to prove the lemma for $i=0$. The projector $\pi_0^S$ is defined as
     \[ \pi_0^S=\oo_S\times S\ \ \ \in A^2(S\times S)\ ,\]
     where $\oo_S\in A^2(S)$ is the ``distinguished point'' of \cite{BV} (any point lying on a rational curve in $S$ equals $\oo_S$ in $A^2(S)$). It is known \cite{BV} that          
     \[\ima\bigl( A^1(S)\otimes A^1(S)\ \to\ A^2(S)\bigr) =\QQ[\oo_S]   \ .\]
     It follows that there exist divisors $D_1, D_2\in A^1(S)$ such that $\oo_S=D_1\cdot D_2$, and so
     \[ h^\ast(\oo_S) =h^\ast(D_1\cdot D_2)  =h^\ast(D_1)\cdot h^\ast(D_2)\ \ \ \in \QQ[\oo_S]\ .\]
     Since $h^\ast(\oo_S)$ is the class of a point $h^{-1}(x)$ (where $x\in S$ is any point lying on a rational curve), it has degree $1$ and thus
     \[ h^\ast(\oo_S)=\oo_S\ \ \ \hbox{in}\ A^2(S)\ .\]
     
     Using Lieberman's lemma \cite[Lemma 3.3]{V6}, we find that
     \[  \begin{split} \pi_0^S\circ \Gamma_h &=  ({}^t \Gamma_h\times\Delta_S)_\ast (\pi_0^S)\\ &= ({}^t \Gamma_h\times\Delta_S)_\ast(\oo_S\times S) \\ &= h^\ast(\oo_S)\times S \\ &=\oo_S\times S=\pi_0^S\ \ \ \hbox{in}\ A^2(S\times S)\ ,\\
     \end{split}\]
     whereas obviously
     \[ \Gamma_h\circ \pi_0^S = (\Delta_S\times \Gamma_h)_\ast (\oo_S\times S)=\oo_S\times S=\pi_0^S\ \ \ \hbox{in}\ A^2(S\times S)\ .\]
     This proves the $i=0$ case of the lemma.
      \end{proof}
    
  The following lemmas establish some corollaries of lemma \ref{comm}:

      \begin{lemma}\label{idemp} Let $S$ be an algebraic $K3$ surface, and $G_S\subset\aut(S)$ a group of finite order $k$. For any $r\in\NN$, let $\{\pi_i^{S^r}\}$ denote the product MCK decomposition of $S^r$ induced by the MCK decomposition of $S$ as above. Let
  \[ \Delta^G_{S^r}:={1\over k}{\displaystyle\sum_{g\in G_S}}\ \Gamma_g\times\cdots\times\Gamma_g\ \ \ \in A^{2r}(S^r\times S^r)\ .\]
  Then
  \[ \Delta^G_{S^r}\circ \pi_i^{S^r}= \pi_i^{S^r}\circ \Delta^G_{S^r}\ \ \ \in A^{2r}(S^r\times S^r)\ \]
  is an idempotent, for any $i$.
  \end{lemma}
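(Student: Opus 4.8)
The plan is to reduce everything to the single--automorphism commutativity of Lemma \ref{comm} via the compatibility of composition with external products. First I would observe that the external product $\Gamma_g\times\cdots\times\Gamma_g$ is nothing but the graph $\Gamma_{g^{\times r}}$ of the simultaneous automorphism $g^{\times r}:=(g,\ldots,g)\in\aut(S^r)$, so that
\[ \Delta^G_{S^r}=\frac1k\sum_{g\in G_S}\Gamma_{g^{\times r}}\ . \]
Recall also that, by construction, the product CK decomposition is an external product of the projectors on the factors:
\[ \pi_i^{S^r}=\sum_{a_1+\cdots+a_r=i}\pi_{a_1}^S\times\cdots\times\pi_{a_r}^S\ . \]

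Next I would prove commutativity one automorphism at a time. Using the standard fact that composition of correspondences is compatible with external products, namely
\[ (\alpha_1\times\cdots\times\alpha_r)\circ(\beta_1\times\cdots\times\beta_r)=(\alpha_1\circ\beta_1)\times\cdots\times(\alpha_r\circ\beta_r)\ , \]
I get
\[ \Gamma_{g^{\times r}}\circ(\pi_{a_1}^S\times\cdots\times\pi_{a_r}^S)=(\Gamma_g\circ\pi_{a_1}^S)\times\cdots\times(\Gamma_g\circ\pi_{a_r}^S)\ . \]
Applying Lemma \ref{comm} in each factor and running the identity in reverse shows that $\Gamma_{g^{\times r}}$ commutes with each summand $\pi_{a_1}^S\times\cdots\times\pi_{a_r}^S$, hence with $\pi_i^{S^r}$. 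Averaging over $g\in G_S$ then yields $\Delta^G_{S^r}\circ\pi_i^{S^r}=\pi_i^{S^r}\circ\Delta^G_{S^r}$, the required commutativity.

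For idempotence I would first check that $\Delta^G_{S^r}$ is itself a projector: since $\Gamma_{g^{\times r}}\circ\Gamma_{(g')^{\times r}}=\Gamma_{(gg')^{\times r}}$ and $g\mapsto g^{\times r}$ is a group homomorphism, expanding $\Delta^G_{S^r}\circ\Delta^G_{S^r}=\frac1{k^2}\sum_{g,g'}\Gamma_{(gg')^{\times r}}$ and re--indexing the inner sum returns $\Delta^G_{S^r}$. The projector $\pi_i^{S^r}$ is idempotent because it is part of a CK decomposition. Writing $P:=\Delta^G_{S^r}\circ\pi_i^{S^r}=\pi_i^{S^r}\circ\Delta^G_{S^r}$ and combining commutativity with the two idempotencies, a one--line computation $P\circ P=\pi_i^{S^r}\circ(\Delta^G_{S^r})^{\circ2}\circ\pi_i^{S^r}=\pi_i^{S^r}\circ\Delta^G_{S^r}\circ\pi_i^{S^r}=(\pi_i^{S^r})^{\circ2}\circ\Delta^G_{S^r}=P$ shows that $P$ is idempotent.

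I do not expect a serious obstacle here: the whole argument is formal once Lemma \ref{comm} is in hand. The only point requiring genuine care is the compatibility of composition with external products, together with the attendant reordering of the factors of $S^r\times S^r$ that makes $\pi_{a_1}^S\times\cdots\times\pi_{a_r}^S$ a self--correspondence, and keeping the convention for composition of graphs consistent; the group--averaging and the idempotence manipulations are then routine bookkeeping.
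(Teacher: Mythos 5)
Your proposal is correct and follows essentially the same route as the paper: the paper likewise reduces idempotence to the commutativity claim (noting both factors are themselves idempotent), and proves commutativity one automorphism at a time by expanding $\pi_i^{S^r}$ as a sum of external products, using compatibility of composition with external products, and invoking Lemma \ref{comm} factorwise. Your explicit verification that $\Delta^G_{S^r}$ is a projector and the one-line computation $P\circ P=P$ merely spell out what the paper leaves as a parenthetical remark.
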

  
 \begin{proof} It suffices to prove the commutativity statement. (Indeed, since both $\Delta^G_{S^r}$ and $\pi_i^{S^r}$ are idempotent, the idempotence of their composition follows immediately from the stated commutativity relation.) 
  To prove the commutativity statement, we will prove more precisely that for any $h\in\aut(S)$ we have equality
    \begin{equation}\label{eachg}  \Gamma_{h^{\times r}}\circ \pi_i^{S^r}= \pi_i^{S^r} \circ \Gamma_{h^{\times r}}  \ \ \ \in A^{2r}(S^r\times S^r)\ .  \end{equation}
  This can be seen as follows:
  we have
    \[ \begin{split}  \Gamma_{h^{\times r}}\circ \pi_i^{S^r} &= (\Gamma_h\times \cdots \times\Gamma_h)\circ ({\displaystyle\sum_{i_1+\cdots+i_r=i}} \pi_{i_1}^S\times \cdots\times \pi_{i_r}^S)\\
                                                           &=   {\displaystyle\sum_{i_1+\cdots+i_r=i}} (\Gamma_h\circ \pi^S_{i_1})\times \cdots \times (\Gamma_h\circ \pi^S_{i_r})\\  
                                                           &=  {\displaystyle\sum_{i_1+\cdots+i_r=i}} ( \pi^S_{i_1}\circ \Gamma_h)\times \cdots \times ( \pi^S_{i_r}\circ \Gamma_h)\\  
                                                           &= {\displaystyle\sum_{i_1+\cdots+i_r=i}} (\pi_{i_1}^S\times \cdots\times \pi_{i_r}^S)      \circ (\Gamma_h\times\cdots\times\Gamma_h)\\
                                                           &= \pi_i^{S^r}\circ \Gamma_{h^{\times r}}\ \ \ \hbox{in}\ A^{2r}(S^r\times S^r)\ .\\
                                                           \end{split}\]
                Here, the first and last lines are the definition of the product MCK decomposition for $S^r$; the second and fourth line are just regrouping, and the third line is lemma \ref{comm}.                                           
       \end{proof}
  
 \begin{lemma}\label{idempx}  Let $S$ be an algebraic $K3$ surface, and $G_S\subset\aut(S)$ a group of finite order $k$. For any $r\in\NN$, let $X=S^{[r]}$ and let $G\subset\aut(X)$ be the group of natural automorphisms induced by $G_S$. Let $\{\pi_i^X\}$ be the MCK decomposition of theorem \ref{hilbk}. Let $\Delta^G_X$ denote the correspondence
   \[ \Delta^G_{X}:={1\over k}{\displaystyle\sum_{g\in G}}\ \Gamma_g\ \ \ \in A^{2r}(X\times X)\ .\]
  Then
  \[ \Delta^G_{X}\circ \pi_i^{X}= \pi_i^{X}\circ \Delta^G_{X}\ \ \ \in A^{2r}(X\times X)\ \]
  is an idempotent, for any $i$.
  \end{lemma}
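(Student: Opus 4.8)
The plan is to imitate the proof of Lemma \ref{idemp}. As there, it suffices to prove the commutativity relation $\Delta^G_X\circ\pi_i^X=\pi_i^X\circ\Delta^G_X$, since the idempotence of the composition then follows at once from the fact that $\Delta^G_X$ and $\pi_i^X$ are each idempotent. Moreover, because $\Delta^G_X=\tfrac1k\sum_{g\in G}\Gamma_g$ is an average and every $g\in G$ is the natural automorphism $h_X$ induced by a unique $h\in G_S\subset\aut(S)$, it is enough to establish, for each such $h$,
\[ \Gamma_{h_X}\circ \pi_i^X = \pi_i^X\circ \Gamma_{h_X}\quad\hbox{in } A^{2r}(X\times X)\ . \]

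The first and main step is to record the equivariance of the de Cataldo--Migliorini correspondence. For a partition $\mu\in\Bb(r)$, let $\bar h_\mu\in\aut(S^{(\mu)})$ be the automorphism of $S^{(\mu)}=S^{l(\mu)}/\Sy_{l(\mu)}$ induced by the diagonal automorphism $h^{\times l(\mu)}$ of $S^{l(\mu)}$ (this descends to the quotient since $h^{\times l(\mu)}$ commutes with the $\Sy_{l(\mu)}$-action). The claim is that $\hat\Gamma_\mu$ intertwines $h_X$ and $\bar h_\mu$, i.e.
\[ \Gamma_{h_X}\circ \hat\Gamma_\mu = \hat\Gamma_\mu\circ \Gamma_{\bar h_\mu},\qquad \Gamma_{\bar h_\mu}\circ {}^t\hat\Gamma_\mu = {}^t\hat\Gamma_\mu\circ \Gamma_{h_X}\ . \]
These follow from the geometric origin of $\hat\Gamma_\mu$, which is supported on an incidence subvariety of $S^{[r]}\times S^{(\mu)}$ defined purely in terms of subschemes and their supports; the product automorphism $h_X\times \bar h_\mu$ carries this incidence subvariety to itself, so $(h_X\times\bar h_\mu)_\ast\hat\Gamma_\mu=\hat\Gamma_\mu$, which by Lieberman's lemma is exactly the first relation, the second being its transpose.

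Next I would invoke Lemma \ref{idemp} with $r$ replaced by $l(\mu)$, giving $\Gamma_{h^{\times l(\mu)}}\circ \pi_j^{S^{l(\mu)}}=\pi_j^{S^{l(\mu)}}\circ \Gamma_{h^{\times l(\mu)}}$ on $S^{l(\mu)}$ for all $j$. Since both the decomposition $\{\pi_j^{S^{(\mu)}}\}$ and the automorphism $\bar h_\mu$ are induced from their $\Sy_{l(\mu)}$-invariant counterparts on $S^{l(\mu)}$, and the quotient map is $\bar h_\mu$-equivariant, this relation descends to the symmetric product:
\[ \Gamma_{\bar h_\mu}\circ \pi^{S^{(\mu)}}_j = \pi^{S^{(\mu)}}_j\circ \Gamma_{\bar h_\mu}\quad\hbox{for all } j\ . \]

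Finally I would assemble everything in definition (\ref{defv}). Substituting the formula for $\pi_i^X$ and moving $\Gamma_{h_X}$ across the three factors term by term — first past $\hat\Gamma_\mu$ (which converts it into $\Gamma_{\bar h_\mu}$), then past $\pi^{S^{(\mu)}}_{i-2k+2l(\mu)}$ (by the commutativity just established), then past ${}^t\hat\Gamma_\mu$ (which converts $\Gamma_{\bar h_\mu}$ back into $\Gamma_{h_X}$) — yields $\Gamma_{h_X}\circ\pi_i^X=\pi_i^X\circ\Gamma_{h_X}$, as desired. I expect the main obstacle to be the equivariance of $\hat\Gamma_\mu$ in the second paragraph: it requires unwinding de Cataldo--Migliorini's explicit incidence construction and checking that the defining correspondences really are equivariant for the natural automorphisms. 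Once that equivariance is in hand, the remaining steps are the same bookkeeping as in Lemma \ref{idemp}.
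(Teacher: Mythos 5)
Your proposal is correct and follows essentially the same route as the paper: expand $\pi_i^X$ via definition (\ref{defv}), push $\Gamma_g$ through the de Cataldo--Migliorini correspondences using their equivariance under natural automorphisms, and apply the commutativity relation (\ref{eachg}) from Lemma \ref{idemp} in the middle. The only difference is that you spell out the equivariance of $\hat\Gamma_\mu$ and the descent to the symmetric product, which the paper compresses into the phrase ``by construction of natural automorphisms of $X$.''
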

  
 \begin{proof} Again, it suffices to prove the commutativity statement. This can be done as follows: for any $g\in G$, we can write $g=h^{[r]}$ where $h\in\aut(S)$. Then we have 
    \[ \begin{split}  \Gamma_g\circ \pi_i^X &= \Gamma_g\circ {\displaystyle\sum_{\mu\in \Bb(k)}} \ {1\over m_\mu} {\Gamma}_\mu \circ \pi^{S^{\mu}}_{i-2k+2l(\mu)}\circ {}^t {\Gamma}_\mu\\
                                   &=   {\displaystyle\sum_{\mu\in \Bb(k)}} \ {1\over m_\mu}\Gamma_g\circ {\Gamma}_\mu \circ \pi^{S^{\mu}}_{i-2k+2l(\mu)}\circ {}^t {\Gamma}_\mu\\
                                   &=  {\displaystyle\sum_{\mu\in \Bb(k)}} \ {1\over m_\mu} {\Gamma}_\mu \circ \Gamma_{h^{\times l(\mu)}} \circ \pi^{S^{\mu}}_{i-2k+2l(\mu)}\circ {}^t {\Gamma}_\mu\\
                                   & =  {\displaystyle\sum_{\mu\in \Bb(k)}} \ {1\over m_\mu} {\Gamma}_\mu \circ \pi^{S^{\mu}}_{i-2k+2l(\mu)}\circ \Gamma_{h^{\times l(\mu)}} \circ {}^t {\Gamma}_\mu\\
                                   & =  {\displaystyle\sum_{\mu\in \Bb(k)}} \ {1\over m_\mu} {\Gamma}_\mu \circ \pi^{S^{\mu}}_{i-2k+2l(\mu)}\circ {}^t {\Gamma}_\mu\circ \Gamma_g\\
                                   &= \pi_i^X\circ \Gamma_g\ \ \ \hbox{in}\ A^{2r}(X\times X)\ .\\
                                   \end{split}\]
                        Here, the first line follows from the definition of $\pi_i^X$ (definition (\ref{defv})). The second line is just regrouping, the third line is by construction of natural automorphisms of $X$, the fourth line is equality (\ref{eachg}) above, and the fifth line is again by construction of natural automorphisms.          
                                   \end{proof}

\begin{lemma}\label{quotientmck} Let $S$ be an algebraic $K3$ surface, and let $X=S^{[r]}$ be the Hilbert scheme of length $r$ subschemes. Let $G\subset\aut(X)$ a group of finite order $k$ of natural automorphisms. Then the quotient $Y:=X/G$ has a self--dual MCK decomposition.
\end{lemma}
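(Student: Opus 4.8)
The plan is to transport the self--dual MCK decomposition $\{\pi_i^X\}$ of $X=S^{[r]}$ (theorem \ref{hilbk}) to the quotient, using the isomorphism of motives $h(Y)\cong h(X)^G=(X,\Delta^G_X,0)$ recalled in the preliminaries. Concretely, I would set (writing $n:=2r=\dim X=\dim Y$)
\[ \pi_i^Y:= \Delta^G_X\circ \pi_i^X=\pi_i^X\circ \Delta^G_X\ \ \ \in A^{n}(X\times X)\ , \]
the two expressions agreeing by lemma \ref{idempx}, and regard the $\pi_i^Y$ as endomorphisms of the motive $h(Y)$ via the above isomorphism. It then remains to check that $\{\pi_i^Y\}$ is a self--dual MCK decomposition of $Y$.

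The verification that $\{\pi_i^Y\}$ is a self--dual CK decomposition is routine, each point reducing to the corresponding property of $\{\pi_i^X\}$ with the help of lemma \ref{idempx}. Idempotence and mutual orthogonality follow from the orthogonality of the $\pi_i^X$ together with the commutation relation and the idempotence of $\Delta^G_X$, e.g.
\[ \pi_i^Y\circ \pi_j^Y=\Delta^G_X\circ \pi_i^X\circ \Delta^G_X\circ \pi_j^X=\Delta^G_X\circ \Delta^G_X\circ \pi_i^X\circ \pi_j^X=\delta_{ij}\,\pi_i^Y\ . \]
Summing gives $\sum_i \pi_i^Y=\Delta^G_X\circ \Delta_X=\Delta^G_X$, which is precisely the identity of the motive $h(Y)=(X,\Delta^G_X,0)$. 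Since $H^\ast(Y)=H^\ast(X)^G$, and $(\pi_i^X)_\ast$ cuts out $H^i(X)$ while $(\Delta^G_X)_\ast$ is the projector onto the $G$--invariants, one gets $(\pi_i^Y)_\ast H^\ast(Y)=H^i(X)^G=H^i(Y)$, so the Künneth property holds. Finally, self--duality of $\{\pi_i^Y\}$ follows from that of $\{\pi_i^X\}$ together with ${}^t\Delta^G_X={1\over k}\sum_{g}\Gamma_{g^{-1}}=\Delta^G_X$: indeed ${}^t\pi_i^Y={}^t\pi_i^X\circ {}^t\Delta^G_X=\pi_{2n-i}^X\circ \Delta^G_X=\pi_{2n-i}^Y$.

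The real work is multiplicativity, and this is the step I expect to be the main obstacle. The point is that $h(Y)\cong (X,\Delta^G_X,0)$ is in fact an isomorphism of \emph{algebra} objects in $\MM_{\rm rat}$: since the quotient map $p\colon X\to Y$ induces a ring isomorphism $p^\ast\colon A^\ast(Y)\xrightarrow{\sim} A^\ast(X)^G$ (and likewise in cohomology), the multiplication morphism $\Delta_Y^{sm}$ of $Y$ should correspond, under this isomorphism, to the restriction $\Delta^G_X\circ \Delta_X^{sm}\circ (\Delta^G_X\times \Delta^G_X)$ of the multiplication of $X$ to the $G$--invariant part. Equivalently, one checks via Manin's identity principle that $(p\times p\times p)^\ast \Delta_Y^{sm}$ is, with the appropriate normalization, the $G\times G$--average of the (twisted) small diagonals $\{(x,gx,g^\prime x)\}\subset X^3$, which is exactly $\Delta^G_X\circ \Delta_X^{sm}\circ (\Delta^G_X\times \Delta^G_X)$. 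Granting this identification, multiplicativity of $\{\pi_i^Y\}$ reduces to that of $\{\pi_i^X\}$: for $i+j\not=k$,
\[ \pi_k^Y\circ \Delta_Y^{sm}\circ (\pi_i^Y\times \pi_j^Y)=\Delta^G_X\circ \pi_k^X\circ \Delta_X^{sm}\circ (\pi_i^X\times \pi_j^X)\circ (\Delta^G_X\times \Delta^G_X)=0\ , \]
where one first absorbs the various copies of $\Delta^G_X$ using lemma \ref{idempx} and the idempotence of $\Delta^G_X$, and then invokes the MCK relation $\pi_k^X\circ \Delta_X^{sm}\circ (\pi_i^X\times \pi_j^X)=0$ for $X$. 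The delicate part is thus the careful identification of $\Delta_Y^{sm}$ with the restricted multiplication of $X$ --- handling the cross terms $g\not=g^\prime$ correctly and justifying the correspondence--level identities by Manin's principle --- after which everything collapses onto the already established MCK decomposition of $X$.
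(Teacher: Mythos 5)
Your proposal is correct and is essentially the paper's proof viewed from the other side of the isomorphism $h(Y)\cong (X,\Delta^G_X,0)$: the paper defines $\pi_j^Y:={1\over k}\Gamma_p\circ\pi_j^X\circ{}^t\Gamma_p$ directly on $Y\times Y$, which corresponds precisely to your $\Delta^G_X\circ\pi_i^X$ under that isomorphism. The "delicate part" you flag is handled there by the same computation (lemma \ref{sm}: ${}^t\Gamma_p\circ\Delta_Y^{sm}\circ\Gamma_{p\times p}=k^3\,\Delta^G_X\circ\Delta_X^{sm}\circ(\Delta^G_X\times\Delta^G_X)$, using ${}^t\Gamma_p\circ\Gamma_p=\sum_g\Gamma_g$), after which multiplicativity collapses onto the MCK relation for $X$ exactly as you describe.
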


\begin{proof} Let $p\colon X\to Y$ denote the quotient morphism. One defines
    \[ \pi_j^Y:= {1\over k} \Gamma_p\circ \pi_j^X\circ {}^t \Gamma_p\ \ \ \in A^{2r}(Y\times Y)\ ,\]
    where $\{\pi^X_j\}$ is the self--dual MCK decomposition of theorem \ref{hilbk}.
    This defines a self--dual CK decomposition $\{\pi_j^Y\}$, since
    \[   \begin{split} \pi_i^Y\circ \pi_j^Y &= {1\over k^2} \Gamma_p\circ \pi_i^X\circ {}^t \Gamma_p  \circ   \Gamma_p\circ \pi_j^X\circ {}^t \Gamma_p  \\
                         &= {1\over k} \Gamma_p\circ \pi_i^X\circ \Delta^G_X \circ \pi_j^X\circ {}^t \Gamma_p \\
                         &= {1\over k} \Gamma_p\circ \pi_i^X\circ \pi_j^X\circ \Delta^G_X\circ {}^t \Gamma_p \\
                         &=\begin{cases}  0 &\hbox{if\ }i\not=j\ ;\\
                                                    {1\over k} \Gamma_p\circ \pi_i^X\circ {}^t \Gamma_p=\pi_i^Y &\hbox{if\ }i=j\ .\\
                                                   \end{cases} 
                                              \end{split}\]   
           (Here, in the third line we have used lemma \ref{comm}.)
           
   It remains to check this CK decomposition is multiplicative. To this end, let $i,j,k$ be integers with $k\not=i+j$. We note that                                 
           \[    \begin{split}    
      \pi^Y_k\circ \Delta_Y^{sm}\circ (\pi^Y_i\times \pi^Y_j)  &= {1\over k^3}\ \ \Gamma_p\circ \pi^X_k\circ {}^t \Gamma_p\circ \Delta^Y_{sm}\circ \Gamma_{p\times p}\circ (\pi^X_i\times \pi^X_j)\circ {}^t \Gamma_{p\times p}\\
                          &=\Gamma_p\circ \pi^X_k\circ \Delta^G_X\circ \Delta_X^{sm}\circ (\Delta^G_X\times \Delta^G_X)\circ   (\pi^X_i\times \pi^X_j)\circ {}^t \Gamma_{p\times p}\\
                       & = \Gamma_p\circ \Delta^G_X\circ \pi^X_k\circ \Delta_X^{sm}\circ   (\pi^X_i\times \pi^X_j)\circ (\Delta^G_X\times \Delta^G_X)\circ {}^t \Gamma_{p\times p}\\
                       & =0\ \ \ \hbox{in}\ A^{2n}(Y\times Y\times Y)\ .\\
                       \end{split}\]
                 Here, the first equality is by definition of the $\pi^Y_i$, the second equality is lemma \ref{sm} below, the third equality follows from lemma \ref{idempx}, and the fourth equality is the fact that the $\pi^X_i$ are an MCK decomposition for $X$.      
        
     \begin{lemma}\label{sm} There is equality
     \[ \begin{split} {}^t \Gamma_p\circ \Delta_Y^{sm}\circ \Gamma_{p\times p}&= (\sum_{g\in G} \Gamma_g)\circ \Delta_X^{sm}\circ \bigl((\sum_{g\in G} \Gamma_g)\times 
         (\sum_{g\in G}           \Gamma_g)\bigr) \\  
         &= k^3\ \Delta^G_X\circ \Delta_X^{sm}\circ (\Delta^G_X\times \Delta^G_X)\ \ \ \hbox{in}\ A^{2n}(X\times X\times X)\ .\\
         \end{split}\]                    
                                                \end{lemma}
                                                
  \begin{proof} The second equality is just the definition of $\Delta^G_X$. As to the first equality, we first note that
         \[  \Delta_Y^{sm}  =(p\times p\times p)_\ast (\Delta_X^{sm}) = \Gamma_p\circ \Delta_X^{sm}\circ {}^t \Gamma_{p\times p}\ \ \ \hbox{in}\ A^{3n}(Y\times Y\times Y)\ .\]
                 This implies that
                 \[ {}^t \Gamma_p\circ \Delta_Y^{sm}\circ \Gamma_{p\times p}= {}^t \Gamma_p\circ \Gamma_p\circ \Delta_X^{sm}\circ {}^t \Gamma_{p\times p}\circ \Gamma_{p\times p}\ .\]
                 But ${}^t \Gamma_p\circ \Gamma_p=\sum_{g\in G} \Gamma_g$, and thus
                 \[ {}^t \Gamma_p\circ \Delta_Y^{sm}\circ \Gamma_{p\times p}=  (\sum_{g\in G} \Gamma_g)\circ \Delta_X^{sm}\circ \bigl((\sum_{g\in G} \Gamma_g)\times 
         (\sum_{g\in G}           \Gamma_g)\bigr) \ \ \ \hbox{in}\ A^{2n}(X\times X\times X)\ ,\]
         as claimed. 
         \end{proof}

\end{proof}

 \subsection{An injectivity result}
 
 \begin{lemma}[Vial \cite{V6}]\label{inj1} Let $S$ be an algebraic $K3$ surface, and $X=S^{[r]}$ the Hilbert scheme of length $r$ subschemes of $S$. The cycle class map induces a map
   \[ A^i_{(0)}(X)\ \to\ H^{2i}(X) \]
   that is injective for $i\ge 2r-1$.
 \end{lemma}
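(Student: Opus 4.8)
The plan is to reduce to products of the $K3$ surface by means of the de Cataldo--Migliorini isomorphism, and then to exploit the fact that in the top two codimensions the weight--$0$ part of the Chow groups is built only out of divisors and points. First I would invoke the split injection of remark \ref{compat}: for $j=0$ it reads
\[ \bigoplus_{\mu\in\Bb(r)} ({}^t\Gamma_\mu)_\ast\colon\ A^i_{(0)}(X)\ \hookrightarrow\ \bigoplus_{\mu\in\Bb(r)} A^{i+l(\mu)-r}_{(0)}(S^{l(\mu)})\ . \]
These correspondences act compatibly on Chow groups and on cohomology, so if $a\in A^i_{(0)}(X)$ is homologically trivial, then each component $({}^t\Gamma_\mu)_\ast a\in A^{m}_{(0)}(S^l)$ (with $l:=l(\mu)$ and $m:=i+l-r$) is again homologically trivial. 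Since the displayed map is injective, it therefore suffices to prove that the cycle class map $A^m_{(0)}(S^l)\to H^{2m}(S^l)$ is injective for every partition $\mu$ of $r$. The numerical hypothesis enters exactly here: from $i\ge 2r-1$ and $l\le r$ one gets $m=i+l-r\ge (2r-1)+l-r=r+l-1\ge 2l-1$, so that $m$ falls in the top two codimensions of the $2l$--dimensional variety $S^l$.

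It then remains to prove that $A^m_{(0)}(S^l)\to H^{2m}(S^l)$ is injective for $m\ge 2l-1$. Here I would use the explicit product MCK decomposition
\[ \pi^{S^l}_{2m}=\sum_{i_1+\cdots+i_l=2m}\pi^S_{i_1}\times\cdots\times\pi^S_{i_l}\ , \]
together with the vanishing $\pi^S_1=\pi^S_3=0$ for a $K3$ surface, which forces every index $i_s$ into $\{0,2,4\}$. As $m\ge 2l-1$, the total $2m\ge 4l-2$ is within $2$ of the maximum $4l$; writing $i_s=4-d_s$ with $d_s\in\{0,2,4\}$ and $\sum d_s=4l-2m\le 2$, the only possibilities are $\sum d_s=0$ (all $i_s=4$) and $\sum d_s=2$ (exactly one $i_s=2$, the rest $=4$). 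In the first case $(\pi^S_4\times\cdots\times\pi^S_4)_\ast A^{2l}(S^l)=\QQ\,(\oo_S\times\cdots\times\oo_S)$ is spanned by the class of a point. In the second case the $l-1$ factors carrying $\pi^S_4$ each contribute only $A^2_{(0)}(S)=\QQ\,\oo_S$ in codimension $2$, so the single factor carrying $\pi^S_2$ is forced to contribute in codimension $m-2(l-1)=1$, i.e.\ it contributes $A^1_{(0)}(S)=A^1(S)$.

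The point I would lean on to make this rigorous is that the factors carrying $\pi^S_0,\pi^S_4$ are Tate (Lefschetz) summands $(S,\pi^S_0)\cong\QQ(0)$, $(S,\pi^S_4)\cong\QQ(-2)$, for which Chow groups are transparent (a Tate twist merely shifts the codimension grading). Since for $m\ge 2l-1$ at most one tensor factor is non--Tate, the contribution of each index tuple is literally an external product of point classes with at most one divisor class, namely of the form
\[ \oo_S\times\cdots\times A^1(S)\times\cdots\times\oo_S\ . \]
Because $A^1_{hom}(S)=0$ for a $K3$ surface and the point class $\oo_S$ is nonzero in $H^4(S)$, each such external product injects into the corresponding Künneth summand of $H^{2m}(S^l)$; as the summands attached to distinct tuples are mutually orthogonal and land in distinct Künneth components, the whole map $A^m_{(0)}(S^l)\to H^{2m}(S^l)$ is injective.

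The main obstacle is precisely the codimension bookkeeping of the preceding paragraph, and it is also where the hypothesis is essential: the argument works only because for $m\ge 2l-1$ at most one tensor factor is the ``transcendental'' motive $(S,\pi^S_2)$. For smaller $m$ two or more factors $(S,\pi^S_2)$ occur simultaneously, and the Chow groups of $(S,\pi^S_2)^{\otimes 2}$ on $S\times S$ do not split as a tensor product (Chow--Künneth fails for products of $K3$ surfaces); such a weight--$0$ class may then be homologically trivial without being rationally trivial, so injectivity genuinely breaks down. This is why the bound $i\ge 2r-1$ cannot be relaxed by this method.
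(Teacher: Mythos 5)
Your proof is correct and takes essentially the same route as the paper: both reduce to powers of $S$ via the de Cataldo--Migliorini split injection of remark \ref{compat}, and both then observe that in the top two codimensions the relevant projectors $\pi^{S^l}_{4l}$ and $\pi^{S^l}_{4l-2}$ factor the identity of $A^i_{(0)}$ through a point class, respectively through copies of $A^1(S)$, which inject into cohomology since $A^1_{hom}(S)=0$. Your explicit decomposition into external products of $\pi^S_0,\pi^S_2,\pi^S_4$ is simply a spelled-out form of the paper's statement about the supports of these projectors.
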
 
 
 \begin{proof} This is stated without proof in \cite[Introduction]{V6}. The idea is as follows: let $i\ge 2r-1$. Using remark \ref{compat}, we obtain a commutative diagram
   \[  \begin{array}[c]{ccc} A^i_{(0)}(X) &\to& A^i_{(0)}(S^r)\\
                        \downarrow &&\downarrow\\
                        H^{2i}(X) &\to& \ H^{2i}(S^r)\ ,\\
                        \end{array}\]
  where horizontal arrows are split injections, and vertical arrows are restrictions of the cycle class map. It thus suffices to prove that restriction of the cycle class map
  \[ A^i_{(0)}(S^r)\ \to\ H^{2i}(S^r) \]
  is injective. 
  
  Let $\{\pi_j^{S^r}\}$ denote the product MCK decomposition constructed above.
  It follows from the definition of $A^i_{(0)}(S^r)$ that 
    \[ (\pi_{2i}^{S^r})_\ast=\ide\colon\ \ \ A^i_{(0)}(S^r)\ \to\ A^i(S^r)\ .\] 
    Let $x\in S$ be a point such that $x=\oo_S$ in $A^2(S)$. Then the projector $\pi^{S^r}_{4r}$ is supported on $S^r\times (x\times\cdots\times x)$, and $\pi_{4r-2}^{S^r}$ is supported on
    \[ S^r\times (S\times x\times\cdots\times x)\cup S^r\times (x\times S\times x\times\cdots\times x)\cup\cdots\cup S^r\times (x\times\cdots\times x\times S)\ \ \ \subset S^r\times S^r\ .\]
    It follows that for $i=2r$ there is a factorization
    \[ \begin{array}[c]{ccc}
               A^{2r}_{(0)}(S^r) &\to& H^{4r}(S^r)\\
               \downarrow&&\downarrow\\
               A^0(x\times\cdots\times x) &\to& H^0(x\times\cdots\times x)\\
               \downarrow&&\downarrow\\
               A^{2r}_{(0)}(S^r) &\to& \ \ H^{4r}(S^r)\ ,\\
               \end{array}\]
               where composition of vertical arrows is $(\pi_{4r}^{S^r})_\ast=\ide$. This implies $A^{2r}_{(0)}(S^r)\cong\QQ$ and the map to $H^{4r}(S^r)$ is an isomorphism.
               
               Likewise, for $i=2r-1$ there is a factorization
           \[     \begin{array}[c]{ccc}
               A^{2r-1}_{(0)}(S^r) &\to& H^{4r-2}(S^r)\\
               \downarrow&&\downarrow\\
             \bigoplus  A^1(S) &\to& \bigoplus H^2(S)\\
               \downarrow&&\downarrow\\
               A^{2r-1}_{(0)}(S^r) &\to& \ \ H^{4r-2}(S^r)\ ,\\
               \end{array}\]     
               where composition of vertical maps is $(\pi_{4r-2}^{S^r})_\ast=\ide$. Since the middle horizontal arrow is injective, this implies the other horizontal arrows are injective as well.
               \end{proof}
               
    \begin{remark} As explained in \cite{SV}, conjecturally the restriction of the cycle class map
    \[ A^i_{(0)}(X)\ \to\ H^{2i}(X) \]
    is injective for any variety $X$ having an MCK decomposition. This is related to Murre's ``conjecture D'' \cite{Mur}, and the expectation that the bigrading $A^\ast_{(\ast)}$ should give a splitting of a Bloch--Beilinson filtration.
    
    As we will see below (lemma \ref{inj2}), for Hilbert schemes of special $K3$ surfaces one can prove more than lemma \ref{inj1}.
    \end{remark}

\subsection{LSY surfaces}

\begin{definition}\label{lsy} An {\em LSY surface\/} (short for ``Livn\'e--Sch\"utt--Yui surface'') is a projective $K3$ surface $S$, with the following properties:

\noindent
(\rom1)
There is a group
  $ G_S\subset\aut(S)$
  acting trivially on $NS(S)$;
  
\noindent
(\rom2) Let $k:=\ord(G_S)$. There is equality
  \[ \dim(T_S)=\phi(k)\ ,\]
  where $T_S\subset H^2(S)$ denotes the transcendental lattice, and $\phi(k)$ is Euler's totient function.
  \end{definition}
  
  \begin{remark} Assumption (\rom1) of definition \ref{lsy} implies that $G_S$ is a finite cyclic group \cite{LSY}, so the definition of the integer $k$ makes sense. 
  Under assumption (\rom1), $\phi(k)$ divides $\dim(T_S)$, so assumption (\rom2) is equivalent to asking that the Picard number of $S$ is maximal among all $K3$ surfaces satisfying (\rom1) for a given value of $k=ord(G_S)$.
  \end{remark}
  
\begin{theorem}[Livn\'e--Sch\"utt--Yui \cite{LSY}]\label{lsythm} Let $S$ be an LSY surface, and $k:=\ord(G_S)$. Then
  \[ k\in\ \ \Bigl\{ 3, 5, 7, 9, 11, 12, 13, 17, 19, 25, 27, 28, 36, 42, 44, 66\Bigr\}\ .\]
  Conversely, for each of these values of $k$, there exists a unique LSY surface $S_k$ with $k:=\ord(G_S)$ up to isomorphism.
  All these surfaces $S_k$ have finite--dimensional motive. 
  \end{theorem}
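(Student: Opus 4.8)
The statement bundles together a classification (the precise list of admissible $k$, together with uniqueness of $S_k$) and a motivic assertion (finite--dimensionality), and the plan is to treat these two halves separately, the classification being essentially the content of Livn\'e--Sch\"utt--Yui.

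First I would set up the linear algebra. Let $g$ generate $G_S$. Since $g$ acts trivially on $NS(S)$ and $g\neq\ide$, the Torelli theorem forces $g$ to act non--trivially on the transcendental lattice $T_S$; moreover $g$ multiplies the holomorphic $2$--form by a root of unity $\zeta_k$ which, by Nikulin's result that a symplectic automorphism acts trivially on $T_S$, must be \emph{primitive} of order exactly $k$. Hence the cyclotomic polynomial $\Phi_k$ of degree $\phi(k)$ divides the characteristic polynomial of $g\vert_{T_S\otimes\QQ}$; as $\dim T_S=\phi(k)$ by hypothesis~(\rom2), this characteristic polynomial equals $\Phi_k$, so $T_S\otimes\QQ$ is a one--dimensional $\QQ(\zeta_k)$--vector space and the Hodge structure on $T_S$ has complex multiplication by $\QQ(\zeta_k)$. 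Comparing the eigenvalue $\zeta_k$ on $H^{2,0}(S)$ with the signature $(2,\phi(k)-2)$ of $T_S$, and using that $\phi(k)$ is even for $k\ge 3$ while $\dim T_S\le 21$, I would extract the coarse bound $\phi(k)\le 20$, which already leaves only finitely many candidates for $k$.

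Next comes the lattice classification. The $\ZZ[\zeta_k]$--module structure makes $T_S$ a rank--one Hermitian $\ZZ[\zeta_k]$--lattice, and evenness of the integral form tightly constrains its discriminant form. I would then invoke Nikulin's theory of discriminant forms and primitive embeddings: $T_S$ must embed primitively into the K3 lattice $\Lambda=U^{\oplus 3}\oplus E_8(-1)^{\oplus 2}$, with orthogonal complement $NS(S)$ of signature $(1,21-\phi(k))$. Running Nikulin's existence and uniqueness criteria against the cyclotomic discriminant data is a finite check over the candidates from the previous step; only the sixteen listed values survive, and for each the lattice $T_{S_k}$ with its primitive embedding is unique up to isometry. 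Uniqueness of $S_k$ then follows from surjectivity of the period map and the strong Torelli theorem: the period is pinned down as the $\zeta_k$--eigenline of $g$ on $T_{S_k}\otimes\C$, so the marked Hodge structure, hence the surface, is rigid. Existence is settled by exhibiting each $S_k$ explicitly as a weighted Delsarte (Fermat--type) surface carrying the required order--$k$ symmetry.

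For finite--dimensionality I would exploit the CM structure. In the Chow--K\"unneth decomposition
\[ h(S)=\mathbf 1\ \oplus\ \mathbb L^{\oplus\rho}\ \oplus\ t_2(S)\ \oplus\ \mathbb L^{\otimes 2}\ , \]
the Tate (Lefschetz) summands are finite--dimensional, so it suffices to handle the transcendental motive $t_2(S)$, whose realization is $T_S$. Because $T_S$ is of CM type, $t_2(S)$ is of abelian type, being a direct summand of the motive of an abelian variety (the Kuga--Satake variety, itself of CM type; most concretely, each $S_k$ is dominated by a Fermat surface, which by Shioda--Katsura is dominated by products of Fermat curves). Since abelian varieties and products of curves have finite--dimensional motive by Kimura, and finite--dimensionality passes to direct summands, finite quotients, and blow--ups, it follows that $h(S)$ is finite--dimensional. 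The genuine obstacle throughout is the finite but delicate lattice computation of the classification step --- converting $\phi(k)\le 20$ into the exact list of sixteen values (eliminating $k=4,6,8,\dots$) and proving uniqueness demands a careful genus/discriminant--form analysis of rank--one $\ZZ[\zeta_k]$--lattices; a secondary point requiring care is upgrading the CM $\Rightarrow$ abelian--type implication from a Hodge--theoretic to a motivic statement.
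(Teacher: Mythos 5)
The paper does not actually reprove this statement: its ``proof'' is a citation of \cite[Theorems 1 and 2]{LSY} together with the explicit equations in \cite[Sections 3 and 4]{LSY}, with the finite--dimensionality justified in the remark that follows the theorem. Your reconstruction of the classification half is faithful to that literature: the chain (trivial action on $NS(S)$ plus Torelli $\Rightarrow$ $g^\ast$ has no nonzero fixed vector on $T_S$ $\Rightarrow$ all eigenvalues are primitive $k$--th roots of unity $\Rightarrow$ $\Phi_k$ divides the characteristic polynomial, so $\phi(k)$ divides $\dim T_S$, with equality by hypothesis (\rom2) making $T_S\otimes\QQ$ a one--dimensional $\QQ(\zeta_k)$--vector space), followed by the discriminant--form analysis of the rank--one Hermitian $\ZZ[\zeta_k]$--lattice and the Torelli/surjectivity argument for uniqueness, is exactly the Vorontsov--Kondo--Livn\'e--Sch\"utt--Yui strategy \cite{Vor}, \cite{Ko}, \cite{LSY}. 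You correctly identify the finite lattice computation as the real content of the classification, and it is reasonable to delegate it.

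The one genuine problem is your primary argument for finite--dimensionality. You assert that $t_2(S)$ is a direct summand of the motive of an abelian variety (the Kuga--Satake variety) because $T_S$ has CM. This is not available: the Kuga--Satake correspondence is a Hodge class that is not known to be algebraic in general, so CM of the Hodge structure does not by itself yield any motivic conclusion --- this is precisely the ``secondary point'' you flag in your last sentence, except that it is not secondary, it is the whole issue. The valid route is the one you relegate to a parenthesis and the one the paper actually uses: for $k\neq 3$ the surface $S_k$ is a Delsarte surface, hence dominated by a Fermat surface, hence dominated by a product of curves and of finite--dimensional motive by \cite{Kim}; and $S_3$, which the paper explicitly excludes from the Delsarte list, has $\rho(S_3)=20$ and is covered by Pedrini's theorem that $K3$ surfaces of Picard number $19$ or $20$ have finite--dimensional motive \cite{P}, which goes through the Shioda--Inose structure --- an honest algebraic correspondence with a Kummer surface, unlike Kuga--Satake. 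Your proof is repaired by promoting the parenthetical to the main argument and treating $S_3$ separately.
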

  
 \begin{proof} This is \cite[Theorems 1 and 2]{LSY}, combined with the explicit descriptions given in \cite[Sections 3 and 4]{LSY}. 
 \end{proof}

\begin{remark} The study of LSY surfaces was initiated by Vorontsov \cite{Vor} and Kondo \cite{Ko}.  Livn\'e--Sch\"utt--Yui give explicit equations for all the surfaces $S_k$ \cite[Sections 3 and 4]{LSY}. 
To give one example, the surface $S_{66}$ can be described as a hypersurface of degree $12$ 
   \[ x_0^2 + x_1^3 + x_2^{11}x_3 + x_3^{12}=0  \]
   in a weighted projective space $\PP(6,4,1,1)$. (As explained in \cite[Remark 2]{LSY}, the surface $S_{66}$ can also be described as an elliptic surface.). 

With the exception of $S_3$ (which is of maximal Picard rank $\rho(S_3)=20$), all the $S_k$ are Delsarte surfaces; as such, they are dominated by Fermat surfaces. This immediately implies finite--dimensionality of the $S_k$.
\end{remark}

\subsection{Sch\"utt surfaces}

\begin{definition}\label{schu} A {\em Sch\"utt surface\/} is a projective $K3$ surface $S$, with the following properties:

\noindent
(\rom1)
There is a group
  $ G_S\subset\aut(S)$  acting trivially on $NS(S)$;
  
\noindent
(\rom2) The order $k:=\ord(G_S)$ is a $2$--power;

\noindent
(\rom3) There is equality
  \[ \dim(T_S)=k\ ,\]
  where $T_S\subset H^2(S)$ denotes the transcendental lattice.
  \end{definition}

Complementing results of \cite{Vor}, \cite{Ko}, \cite{LSY}, Sch\"utt has classified Sch\"utt surfaces:

\begin{theorem}[Sch\"utt \cite{Schu}]\label{thschu} Let $S$ be a Sch\"utt surface, and $k=\ord{G_S}$. Then
  \[ k\in \bigl\{ 2,4,8,16\bigr\}\ .\]
  Conversely:
  
  \begin{description}
   \item[$k=2$] there exists a unique Sch\"utt surface $S_2$ with $k=2$ (up to isomorphism);
   \item[$k=4$] any Sch\"utt surface with $k=4$ is an element of the one--dimensional family $S_{4,\lambda}^{unimod}$  ($\lambda\in\C$) or the one--dimensional family $S_{4,\lambda}^{non}$ ($\lambda\in\C$);
   \item[$k=8$] any Sch\"utt surface with $k=8$ is an element of a one--dimensional family $S_{8,\lambda}$ ($\lambda\in\C$);
   \item[$k=16$] any Sch\"utt surface with $k=16$ is an element of a one--dimensional family $S_{16,\lambda}$ ($\lambda\in\C$).
  \end{description}
  \end{theorem}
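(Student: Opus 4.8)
This is Sch\"utt's classification \cite{Schu}; I sketch the strategy one would follow. Since $S$ is a K3 surface and $G_S$ acts trivially on $NS(S)$, the global Torelli theorem reduces the problem to lattice theory: writing $\Lambda=H^2(S,\ZZ)$ for the K3 lattice, one classifies the finite-order isometries $g\in O(\Lambda)$ together with an admissible period $\omega\in\PP(\Lambda\otimes\C)$ for which the invariant part $\Lambda^g$ coincides with the algebraic lattice $NS(S)$. By the remark following Definition \ref{lsy} the group $G_S=\langle g\rangle$ is cyclic of order $k=\ord(G_S)$; as $H^{2,0}(S)=\C\omega$ is one-dimensional, $g$ acts on $\omega$ by a primitive $k$-th root of unity $\zeta_k$, so the cyclotomic polynomial $\Phi_k$ divides the characteristic polynomial of $g$ on $T_S\otimes\QQ$. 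Moreover $g$ has no nonzero fixed vector in $T_S=NS(S)^\perp$: a fixed transcendental vector would be orthogonal to both $\omega$ and $\bar\omega$, hence of type $(1,1)$ and therefore algebraic, a contradiction. Thus $\Phi_1$ does not occur in the characteristic polynomial of $g|_{T_S}$.

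The bound on $k$ is then immediate: every K3 surface satisfies $\dim T_S=22-\rho(S)\le 21$, so condition (\rom3), namely $\dim T_S=k$, gives $2^m\le 21$ and hence $m\le 4$; since $\dim T_S\ge 2$ forces $k\ge 2$, one obtains $k\in\{2,4,8,16\}$.

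For the converse one must, for each admissible $k$, determine the possible pairs $(\Lambda^g,\,g|_{T_S})$ and establish their realizability and moduli. The crucial input is the multiplicity of each eigenvalue of $g$ on $T_S\otimes\C$; these multiplicities are constrained by the topological and holomorphic Lefschetz fixed-point formulas applied to the (smooth) fixed locus of $g$. Combined with the equality $\dim T_S=k=2\phi(k)$, this forces, for $k\ge 4$, that $T_S\otimes\QQ$ be $\Phi_k$-isotypic of multiplicity $2$ — equivalently, $T_S$ is a free $\ZZ[\zeta_k]$-module of rank $2$ — while for $k=2$ one has $g=-\ide$ on $T_S$ with $\rank T_S=2$. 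Nikulin's existence and uniqueness criteria for even lattices and for lattice isometries in a prescribed genus then pin down $\Lambda^g=NS(S)$ and the transcendental datum up to finitely many possibilities, each realized by an actual K3 via surjectivity of the period map. The dimension of each family equals that of the associated period domain: for $k\ge 4$ one has $\zeta_k^2\neq 1$, so the eigenspace $V_{\zeta_k}\subset T_S\otimes\C$ is isotropic of dimension $2$ and $\omega$ ranges over a complex ball of dimension $1$ in $\PP(V_{\zeta_k})$, giving the one-dimensional families; for $k=2$ the lattice $T_S$ is positive definite of rank $2$, so $S$ is a singular K3 surface, rigid and determined by $T_S$, whence the unique $S_2$. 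The two families for $k=4$ arise from the two admissible genera of $T_S$ (unimodular, resp.\ not), recorded as $S_{4,\lambda}^{unimod}$ and $S_{4,\lambda}^{non}$.

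The step I expect to be the main obstacle is precisely this lattice-theoretic enumeration: carrying out the fixed-point-formula computation of the eigenvalue multiplicities for each $k$, and then verifying through Nikulin's discriminant-form machinery exactly which genera of invariant and transcendental lattices occur and that they are effectively realized. Producing Sch\"utt's explicit (elliptic-fibration) models that exhibit the families and separate the two $k=4$ cases is the most laborious ingredient; short of that, one invokes \cite{Schu} directly.
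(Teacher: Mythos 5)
The paper's own ``proof'' of theorem \ref{thschu} is nothing more than the citation of \cite[Theorem 1]{Schu}, and your proposal ultimately defers to the same reference, so the two agree in approach. Your accompanying sketch of Sch\"utt's strategy --- faithfulness of the cyclic action on $H^{2,0}$, the absence of fixed vectors in $T_S$, the fact that $T_S$ becomes a free $\ZZ[\zeta_k]$-module so that $k=\dim T_S=2\phi(k)\le 21$ forces $k\in\{2,4,8,16\}$, and the Nikulin-style lattice enumeration together with the period-domain dimension count giving rigidity for $k=2$ and one-dimensional families for $k\ge 4$ --- is an accurate outline of what the cited classification actually involves, even though none of it appears in the paper itself.
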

  
\begin{proof} This is \cite[Theorem 1]{Schu}.
\end{proof}

\begin{remark} The surfaces in theorem \ref{thschu} are given by explicit equations. For example, the family $S_{4,\lambda}^{unimod}$ is defined by the Weierstrass equation
  \[ y^2=x^3-3\lambda t^4x+t^5+t^7\ \]
  \cite[Theorem 1]{Schu}.
  For a generic $\lambda$, this surface will have $\rank(T_S)=4$, and so the surface is a Sch\"utt surface.
  \end{remark}

Contrary to the LSY surfaces, {\em not\/} all Sch\"utt surfaces have provably finite--dimensional motive. Some of them do, however:

\begin{proposition}[Sch\"utt \cite{Schu}]\label{schuttfdim} Let $S$ be either a $S_{4,\lambda}^{unimod}$ with $\lambda$ generic, or 
  \[ S\in \Bigl\{ S_2, S_{4,0}^{non}, S_{8,0}, S_{8,2}, S_{8,\sqrt{3}}, S_{16,0}, S_{16,2}, S_{16,\sqrt{3}}\Bigr\}\ .\]
  Then $S$ is a Sch\"utt surface with finite--dimensional motive.
  \end{proposition}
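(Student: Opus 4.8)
The plan is to verify finite--dimensionality for each surface on the list by reducing it, through explicit geometry, to one of the two standard sources of finite--dimensional motives available for $K3$ surfaces: Picard number $19$ or $20$ (Pedrini \cite{P}), and domination by products of curves (Kimura \cite{Kim}). The unifying principle is that in each case the transcendental part of $h^2(S)$, call it $t_2(S)$, is cut out by a projector from the motive of an abelian variety (equivalently, from a variety dominated by products of curves), while the complementary summands of $h(S)$ are Tate motives; since finite--dimensionality is stable under direct sums and passes to direct summands, $h(S)$ is then finite--dimensional.

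First I would treat $S_2$, where $\dim T_{S_2}=k=2$ forces $\rho(S_2)=22-2=20$, so finite--dimensionality is immediate from \cite{P}. Next, for the generic member of $S_{4,\lambda}^{unimod}$ one has $\rank T_S=4$, hence $\rho=18$, which is too small for \cite{P}. Here I would exploit the label \emph{unimod}: the transcendental lattice is the unimodular even lattice $U^{\oplus 2}$ of signature $(2,2)$, which is exactly the transcendental lattice of a generic product of elliptic curves $A=E_1\times E_2$ (there $NS(A)\cong U$, whence $T_A\cong U^{\oplus 2}$). By the Shioda--Inose structure theory, such a $K3$ surface fits into a diagram of degree--$2$ rational quotient maps relating $S$ to the Kummer surface $\mathrm{Km}(A)$ and inducing an isomorphism of transcendental motives $t_2(S)\cong t_2(\mathrm{Km}(A))$. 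Since $\mathrm{Km}(A)$ is dominated by $A=E_1\times E_2$, a product of curves, it is finite--dimensional by \cite{Kim}; hence so is the summand $t_2(S)$, and therefore $h(S)$.

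It remains to handle the special members $S_{4,0}^{non}$, $S_{8,0}$, $S_{8,2}$, $S_{8,\sqrt{3}}$, $S_{16,0}$, $S_{16,2}$, $S_{16,\sqrt{3}}$, whose generic siblings have large transcendental lattices ($\dim T_S=k\in\{4,8,16\}$, so $\rho$ as low as $6$) and are not obviously finite--dimensional. The point is that these particular fibres are \emph{special}: at the listed parameter values the Hodge structure on $T_S$ degenerates to a complex multiplication Hodge structure (its Mumford--Tate group becomes a torus). For such CM $K3$ surfaces the transcendental motive is again of abelian type --- it is realized as a direct summand of the motive of an abelian variety with CM, which is isogenous to a product of CM abelian varieties and hence dominated by products of curves --- so finite--dimensionality follows once more from \cite{Kim}. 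For the values $\lambda=0$ I would first check whether the defining Weierstrass equation becomes of Delsarte type, in which case $S$ is directly dominated by a Fermat surface, hence by a product of curves, giving finite--dimensionality with no Hodge--theoretic input.

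The main obstacle lies entirely in the explicit geometry, not in the motivic formalism. For the generic unimodular family one must genuinely identify the pair $(E_1,E_2)$ attached to each $\lambda$ and confirm the Shioda--Inose hypothesis $T_S\cong T_{E_1\times E_2}$; for the special fibres one must \emph{prove} that complex multiplication (or a Delsarte/Fermat structure) actually occurs at the precise values $0,2,\sqrt{3}$, and not merely that some proper subfield acts. This is the delicate, case--by--case heart of the argument, and it is where the detailed lattice and Weierstrass computations of \cite{Schu} are indispensable; once the relevant abelian varieties or Fermat dominations are in hand, the transfer of finite--dimensionality is the soft closure argument sketched above.
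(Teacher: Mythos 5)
Your treatment of $S_2$ (Picard number $20$) and of the generic member of $S_{4,\lambda}^{unimod}$ (Shioda--Inose structure, hence $t_2(S)$ of abelian type) matches the paper's argument. The divergence, and the problem, is in how you handle the seven special surfaces $S_{4,0}^{non}, S_{8,0}, S_{8,2}, S_{8,\sqrt{3}}, S_{16,0}, S_{16,2}, S_{16,\sqrt{3}}$. For these the paper simply invokes \cite[Lemma 18]{Schu}: \emph{all seven} are dominated by Fermat surfaces (being of Delsarte type at these parameter values), hence by products of curves, hence finite--dimensional by \cite{Kim}. You reserve the Delsarte/Fermat route only for the $\lambda=0$ fibres and propose to handle the remaining ones ($\lambda=2,\sqrt{3}$) via complex multiplication.

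That CM step is a genuine gap. The assertion that a CM $K3$ surface has $t_2(S)$ realized as a direct summand of the motive of a CM abelian variety is essentially the algebraicity of the Kuga--Satake correspondence for such surfaces; it does not follow softly from the Mumford--Tate group being a torus, and it was not an available general theorem one could cite here. Even granting that the Hodge structure on $T_S$ acquires CM at $\lambda=2,\sqrt{3}$, you would still need to produce an actual correspondence (a cycle on $S\times A$) inducing the isomorphism of Hodge structures before finite--dimensionality can be transferred from the abelian side --- and producing such correspondences is precisely the hard, generally open, part. The fix is to drop the CM detour entirely: Sch\"utt's explicit computations show that at each of the listed special parameter values the Weierstrass model is of Delsarte type, so the Fermat--domination argument you already sketched for $\lambda=0$ applies uniformly to all seven surfaces, and no Hodge--theoretic input is needed.
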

 
 \begin{proof} A generic element of the pencil $S_{4,\lambda}^{unimod}$ is a Sch\"utt surface \cite{Schu}. It also has a Shioda--Inose structure \cite{Schu}, which implies finite--dimensionality. The surface $S_2$ has Picard number $20$, hence is Kummer. The other surfaces in proposition \ref{schuttfdim} are dominated by Fermat surfaces \cite[Lemma 18]{Schu}, hence have finite--dimensional motive.
 \end{proof}

\subsection{Transcendental part of the motive}

\begin{theorem}[Kahn--Murre--Pedrini \cite{KMP}]\label{kmp} Let $S$ be a surface. There exists a decomposition
  \[ h_2(S)= t_2(S)\oplus h_2^{alg}(S)\ \in \MM_{\rm rat}\ ,\]
  such that
  \[  H^\ast(t_2(S),\QQ)= H^2_{tr}(S)\ ,\ \ H^\ast(h_2^{alg}(S),\QQ)=NS(S)_{\QQ}\ \]
  (here $H^2_{tr}(S)$ is defined as the orthogonal complement of the N\'eron--severi group $NS(S)_{\QQ}$ in $H^2(S,\QQ)$),
  and
   \[ A^\ast(t_2(S))_{\QQ}=A^2_{AJ}(S)_{\QQ}\ .\]
   (The motive $t_2(S)$ is called the {\em transcendental part of the motive\/}.)
 
   Let $h_2^{alg}(S)=(S,\pi_2^{alg},0)\in\MM_{\rm rat}$. The projector $\pi_2^{alg}$ is supported on $D\times D$, for $D\subset S$ a divisor.
    \end{theorem}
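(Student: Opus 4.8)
The plan is to refine a Chow--K\"unneth decomposition of the surface $S$ and split off, by hand, the algebraic part of the middle cohomology $H^2(S)$. By Murre's construction there is a CK decomposition
\[ \Delta_S=\pi_0+\pi_1+\pi_2+\pi_3+\pi_4\ \ \ \hbox{in}\ A^2(S\times S)\ ,\]
with $(\pi_i)_\ast H^\ast(S)=H^i(S)$, with $\pi_0=P\times S$ and $\pi_4=S\times P$ for a point $P$, and with $\pi_1,\pi_3$ supported on $C\times S$ resp. $S\times C$ for a curve $C$. The middle motive $h_2(S)=(S,\pi_2,0)$ satisfies $(\pi_2)_\ast A^1(S)=NS(S)_\QQ$ and $(\pi_2)_\ast A^2(S)=A^2_{AJ}(S)$, and the whole point is to peel off the first summand motivically.

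Next I would build the algebraic projector explicitly. Choosing divisors $D_1,\ldots,D_\rho$ whose classes form a basis of $NS(S)_\QQ$, the Hodge index theorem guarantees that the intersection matrix $q_{ij}:=D_i\cdot D_j$ is invertible; writing $(q^{ij})$ for its inverse, I set
\[ \pi_2^{alg}:=\sum_{i,j} q^{ij}\, D_i\times D_j\ \ \ \in A^2(S\times S)\ ,\]
which is manifestly supported on $D\times D$ with $D:=\bigcup_i D_i$. The composition rule $(D_k\times D_l)\circ(D_i\times D_j)=q_{jk}\,(D_i\times D_l)$ together with $\sum_j q^{ij}q_{jk}=\delta^i_k$ shows $\pi_2^{alg}\circ\pi_2^{alg}=\pi_2^{alg}$, and a dimension count shows $\pi_2^{alg}$ is orthogonal to $\pi_0$ and $\pi_4$. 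To make $\pi_2^{alg}$ orthogonal to $\pi_1,\pi_3$ as well — so that it becomes a genuine summand of $\pi_2$ — I would replace it by $\pi_2\circ\pi_2^{alg}\circ\pi_2$, checking that this neither changes its cohomology class nor destroys the $D\times D$ support. Setting $t_2:=\pi_2-\pi_2^{alg}$ then yields the orthogonal decomposition $h_2(S)=t_2(S)\oplus h_2^{alg}(S)$ in $\MM_{\rm rat}$, with $h_2^{alg}(S)=(S,\pi_2^{alg},0)$.

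The identifications are then direct checks. Using Lieberman's lemma, $(\pi_2^{alg})_\ast$ acts on $H^2(S)$ as $\alpha\mapsto\sum_{i,j}q^{ij}\langle D_i,\alpha\rangle D_j$, which restricts to the identity on $NS(S)_\QQ$ (take $\alpha=D_k$ and use $\sum_{i,j}q^{ij}q_{ik}D_j=D_k$) and vanishes on the orthogonal complement $H^2_{tr}(S)$; hence $H^\ast(h_2^{alg}(S))=NS(S)_\QQ$ and $H^\ast(t_2(S))=H^2_{tr}(S)$. For the Chow groups, the key observation is that on $0$--cycles the correspondence $D_i\times D_j$ acts as zero: for $z\in A^2(S)$ one may move $z$ off $D_i$, so $(D_i\times D_j)_\ast z=0$ for dimension reasons, whence $(\pi_2^{alg})_\ast=0$ on $A^2(S)$ and $(t_2)_\ast$ agrees there with $(\pi_2)_\ast$, giving $A^2_{AJ}(S)$. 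On $A^1(S)$ the action $(\pi_2^{alg})_\ast D=\sum_{i,j} q^{ij}(D\cdot D_i)D_j$ depends only on the numerical class of $D$, hence equals the $NS$--projection and coincides with $(\pi_2)_\ast$, so $(t_2)_\ast$ kills $A^1(S)$; and both projectors vanish on $A^0(S)$. This yields $A^\ast(t_2(S))=A^2_{AJ}(S)$.

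The main obstacle is the orthogonalization step: the naive $\pi_2^{alg}$ is an idempotent but need not be orthogonal to $\pi_1$ and $\pi_3$ at the level of rational equivalence (only cohomologically), and one must verify that the corrected projector $\pi_2\circ\pi_2^{alg}\circ\pi_2$ retains the crucial support property on $D\times D$. This hinges on the fine structure of Murre's decomposition — concretely, that $\pi_1,\pi_3$ may be chosen supported on a curve — and is exactly the point where the explicit geometry of the surface enters.
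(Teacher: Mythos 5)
The paper offers no proof of this statement: it is quoted from Kahn--Murre--Pedrini \cite{KMP} as a black box, so the only meaningful comparison is with the construction in \cite{KMP} itself. Your outline follows that construction: start from Murre's Chow--K\"unneth decomposition of a surface, split off the algebraic part of $\pi_2$ by the explicit correspondence $\pi_2^{alg}=\sum_{i,j}q^{ij}\,D_i\times D_j$ (visibly supported on $D\times D$), and set $t_2:=\pi_2-\pi_2^{alg}$. Your verification that $\pi_2^{alg}$ is idempotent, your computation of its action on $H^2(S)$ (identity on $NS(S)_\QQ$, zero on the orthogonal complement), and the moving argument showing $(\pi_2^{alg})_\ast=0$ on $A^2(S)$ are all correct.

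The genuine gap is the one you name in your last paragraph and then leave open: orthogonality of $\pi_2^{alg}$ against $\pi_1$ and $\pi_3$. Your proposed repair $\pi_2\circ\pi_2^{alg}\circ\pi_2$ does not come for free, and the two unverified points are exactly the substance of the theorem. First, idempotence of $\pi_2\circ\pi_2^{alg}\circ\pi_2$ is equivalent to $\pi_2^{alg}\circ\pi_2\circ\pi_2^{alg}=\pi_2^{alg}$, i.e.\ to $\pi_2^{alg}\circ\pi_1\circ\pi_2^{alg}=\pi_2^{alg}\circ\pi_3\circ\pi_2^{alg}=0$ --- an orthogonality statement of precisely the kind you were trying to sidestep. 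It is true, but for a reason you must supply: using $\pi\circ(A\times B)=A\times \pi_\ast(B)$ one gets $\pi_1\circ(D_i\times D_j)=D_i\times(\pi_1)_\ast(D_j)$ with $(\pi_1)_\ast(D_j)\in A^1_{hom}(S)$, so a further composition with $D_k\times D_l$ carries the factor $\deg\bigl((\pi_1)_\ast(D_j)\cdot D_k\bigr)=0$. Second, the $D\times D$ support of the corrected projector does not follow from the supports of $\pi_1,\pi_3$ on a curve, as you suggest; rather, expanding $\pi_2=\Delta_S-\pi_0-\pi_1-\pi_3-\pi_4$ shows every term of $\pi_2\circ\pi_2^{alg}\circ\pi_2$ has the form $({}^t\pi_b)_\ast(D_i)\times(\pi_a)_\ast(D_j)$, a product of two divisor classes, which is what saves the support claim. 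The cleaner route of \cite{KMP} is to adjust the representatives $D_i$ modulo $\Pic^0(S)$ (replace $D_i$ by $(\pi_2)_\ast D_i$) so that the naive $\pi_2^{alg}$ is already orthogonal to $\pi_1$ and $\pi_3$. Without one of these arguments spelled out, the proposal is an accurate outline of the KMP construction rather than a proof.
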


\subsection{Natural automorphisms of Hilbert schemes}

\begin{definition}[Boissi\`ere \cite{Bo}] Let $S$ be a surface, and let $X=S^{[k]}$ denote the Hilbert scheme of length $k$ subschemes. An automorphism $\psi\in\aut(S)$ induces an automorphism $\psi^{[k]}$ of $X$. This determines a homomorphism
  \[ \begin{split} \aut(S)\ &\to\ \aut(X)\ ,\\
              \psi\ &\mapsto\ \psi^{[k]}\ ,\\
           \end{split}\]
    which is injective \cite{Bo}. The image of this homomorphism is called the group of {\em natural automorphisms\/} of $X$.          
\end{definition}

\begin{theorem}[Boissi\`ere--Sarti \cite{BoSa}] Let $S$ be a $K3$ surface, and $X=S^{[k]}$. Let $E\subset X$ denote the exceptional divisor of the Hilbert--Chow morphism. An automorphism $g\in\aut(X)$ is natural if and only if $g^\ast(E)=E$ in $NS(X)$.
\end{theorem}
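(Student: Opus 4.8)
The plan is to prove the two implications separately; the forward one is immediate, and all the substance is in the converse. Throughout I assume $k\ge 2$ (for $k=1$ there is nothing to prove), and write $\pi\colon X\to S^{(k)}$ for the Hilbert--Chow morphism, a crepant resolution whose exceptional divisor is exactly $E$. Two facts I would use freely: $E$ is rigid (it is the unique effective divisor in its numerical class, its Beauville--Bogomolov square being negative), and $NS(X)=\pi^\ast NS(S)\oplus\ZZ\delta$ with $E=2\delta$. For the forward direction, if $g=\psi^{[k]}$ is natural then by construction $\pi\circ g=\psi^{(k)}\circ\pi$, so $g$ maps the exceptional locus of $\pi$ to itself; hence $g(E)=E$ and in particular $g^\ast(E)=E$ in $NS(X)$.

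For the converse, suppose $g^\ast(E)=E$. Since $E$ is rigid, the equality of classes improves to $g(E)=E$ as subsets. I would then show $g$ descends along $\pi$. Because $X$ is hyperk\"ahler ($K_X=0$), the morphism $\pi$ is the extremal divisorial contraction determined by $E$: the contracted curves are the rational curves $C$ spanning the extremal ray with $E\cdot C<0$. An automorphism preserving $E$ preserves the intersection pairing, hence preserves this numerical class and sends $\pi$-fibres to $\pi$-fibres; consequently $\pi\circ g$ is constant on the fibres of $\pi$ and factors as $\pi\circ g=\bar g\circ\pi$ for a unique $\bar g\in\aut(S^{(k)})$.

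It remains to see that $\bar g$ comes from an automorphism of $S$. I would lift it to $S^k$. Writing $S^{(k)}_{sm}$ for the complement of the image of the big diagonal and $S^k_{sm}$ for its preimage, the map $S^k_{sm}\to S^{(k)}_{sm}$ is an \'etale $\Sy_k$-cover; since $S$ is a simply connected K3 surface and the diagonals have real codimension $4$, the space $S^k_{sm}$ is simply connected, so this is the universal cover and $\Sy_k=\pi_1(S^{(k)}_{sm})$. Thus $\bar g$ lifts to $\tilde g\in\aut(S^k_{sm})$, which extends to $\aut(S^k)$ by normality. Any lift of a quotient automorphism normalizes the deck group $\Sy_k$; using the structure $\aut(S^k)=\aut(S)\wr\Sy_k$ (valid for a K3 surface, where the factors cannot be mixed) one checks that the normalizer of $\Sy_k$ is the diagonal copy of $\aut(S)$ extended by $\Sy_k$. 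Hence $\tilde g=(\psi,\dots,\psi;\sigma)$ for some $\psi\in\aut(S)$, so that $\bar g=\psi^{(k)}$. Finally $g$ and the natural automorphism $\psi^{[k]}$ both lift $\psi^{(k)}$ along $\pi$ and agree on the dense open set where $\pi$ is an isomorphism, whence $g=\psi^{[k]}$ is natural.

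The main obstacle is the bridge between the purely numerical hypothesis $g^\ast(E)=E$ and the geometric descent: one must argue carefully that preserving the class of $E$ forces $g$ to respect the contraction $\pi$ (this is where rigidity of $E$ and the numerical characterization of the contracted curves enter), and then that the only automorphisms of $S^k$ normalizing $\Sy_k$ are diagonal ones composed with a permutation. This last structural statement for $\aut(S^k)$ is precisely where the special geometry of the K3 surface---its lack of nonconstant maps between distinct factors---is indispensable.
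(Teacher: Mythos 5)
The paper does not actually prove this statement: its ``proof'' consists of the citation to \cite[Theorem 1]{BoSa}, so there is no in-paper argument to compare against. What you have written is, in outline, the standard argument (and essentially the one of Boissi\`ere--Sarti): descend $g$ along the Hilbert--Chow morphism to an automorphism of $S^{(k)}$, lift to $S^k$ via the universal cover of the smooth locus, and identify the normalizer of the deck group $\Sy_k$ inside $\aut(S^k)$. The easy direction, the descent via the extremal contraction (note that for this step the numerical hypothesis already suffices: $g^\ast$ is an isometry of the Beauville--Bogomolov form fixing $\delta$, hence preserves $\delta^\perp=\pi^\ast NS(S^{(k)})_{\QQ}$, hence $g_\ast$ preserves the ray of contracted curve classes, so $g$ permutes the fibres of $\pi$), and the normalizer computation inside the wreath product are all correct.

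Two places, however, are asserted rather than proved and carry real weight. First, the extension of the lift $\tilde g$ from $S^k_{sm}$ to $S^k$: ``by normality'' is not by itself a reason (an isomorphism between open subsets with codimension-$2$ complement need not extend in general); the correct justification is that $S^k$ is the normalization of $S^{(k)}$ in the Galois extension $K(S^k)/K(S^{(k)})$, so $\bar g$ together with the induced automorphism of $K(S^k)$ extends to $S^k$ by the universal property of normalization. Second, and more substantially, the identity $\aut(S^k)=\aut(S)\wr\Sy_k$ is the real content of the converse and is supported only by the slogan that ``the factors cannot be mixed''. This is true for a K3 surface but needs an argument: one must show that each composite $p_i\circ f\colon S^k\to S$, for $f\in\aut(S^k)$, factors through a single projection --- e.g.\ by ruling out positive-dimensional non-surjective images of the slice maps $S\to S$, using that a surjective self-morphism of a K3 surface is an automorphism and that $\aut(S)$ is discrete ($H^0(S,T_S)=0$), or by characterizing the summands $p_i^\ast H^2(S)\subset H^2(S^k)$ intrinsically via the cup product. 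Without this, the step ``$\tilde g=(\psi,\dots,\psi;\sigma)$'' is unsupported. Finally, the rigidity of $E$ deserves one line rather than an appeal to the sign of its square: $g^\ast E$ is an effective divisor in the class of the prime divisor $E$, and since $E$ is covered by $\pi$-contracted curves $C$ with $E\cdot C<0$, any effective divisor in that class contains every such $C$, hence contains $E$, hence equals $E$. With these points supplied the argument is complete.
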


\begin{proof} This is \cite[Theorem 1]{BoSa}.
\end{proof}

\begin{remark} To find examples of non--natural automorphisms of a Hilbert scheme $X$, Boissi\`ere and Sarti introduce the notion of {\em index\/} of an automorphism of $X$. For Hilbert schemes of a generic algebraic $K3$ surface, the index of an automorphism is $1$ if and only if the automorphism is natural \cite[section 4]{BoSa}.
\end{remark}

\subsection{A support lemma}

For later use, we establish a lemma:

  \begin{lemma}\label{pi2k} Let $S$ be an LSY surface or Sch\"utt surface, and let $G_S$ be the order $k$ group as in definition \ref{lsy}, resp. definition \ref{schu}. For any $r\in\NN$ let
  \[    \Delta^G_{S^r}:= {1\over k}{\displaystyle\sum_{g\in G_{S}}} \Gamma_g\times\cdots\times\Gamma_g\in A^{2r}(S^r\times S^r)\ .\]  
  Let $\{\pi_j^{S^r}\}$ denote the product MCK decomposition for $S^r$ as above. There is a homological equivalence
    \[  \Delta^G_{S^r}\circ \pi_2^{S^r} = \gamma\ \ \ \ \hbox{in}\ H^{4r}(S^r\times S^r)   \ ,\]
    where $\gamma$ is a cycle supported on $C\times D\subset S^r\times S^r$, and $C\subset S^r$ is a curve and $D\subset S^r$ is a divisor.
    \end{lemma}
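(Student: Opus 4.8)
The plan is to reduce the statement to a single K3-factor computation using the product structure of the MCK decomposition, then exploit the special geometry of LSY/Sch\"utt surfaces—namely that $G_S$ acts trivially on $NS(S)$—to analyze the action of $\Delta^G_{S^r}$ on the transcendental part of the motive. The key observation is that $\pi_2^{S^r}$ decomposes as a sum of external products of the $\pi_j^S$ over multi-indices summing to $2$, and each $\pi_2^S$ splits via Theorem \ref{kmp} into a transcendental piece $t_2(S)$ and an algebraic piece $h_2^{alg}(S)$ whose projector is already supported on $D\times D$ for a divisor $D$.

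**Reduction to one factor.** First I would write
\[ \pi_2^{S^r}=\sum_{i_1+\cdots+i_r=2}\pi_{i_1}^S\times\cdots\times\pi_{i_r}^S\ ,\]
where each summand has exactly one factor equal to $\pi_2^S$ (the rest being $\pi_0^S$ or $\pi_4^S$, which are supported on $\oo_S\times S$ and $S\times\oo_S$, hence on point$\,\times\,S$ or $S\,\times\,$point), or else has two factors equal to $\pi_1^S$. By Lemma \ref{idemp}, $\Delta^G_{S^r}$ commutes with $\pi_2^{S^r}$, and since $\Delta^G_{S^r}=\frac1k\sum_{g}\Gamma_g\times\cdots\times\Gamma_g$ acts factorwise, the whole computation reduces to understanding $\Delta^G_S\circ\pi_2^S$ on a single K3 factor, where $\Delta^G_S=\frac1k\sum_{h\in G_S}\Gamma_h$. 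For the factors carrying $\pi_0^S$, $\pi_4^S$, or $\pi_1^S$, the external support is automatically of the form (something of dimension $\le 1$)$\,\times\,$(something), so those summands already land on $C\times D$ with $C$ a curve and $D$ a divisor; the real content is the single factor carrying $\pi_2^S$.

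**The transcendental piece vanishes homologically.** For that factor I would use Theorem \ref{kmp} to split $\pi_2^S=\pi_2^{tr}+\pi_2^{alg}$, where $\pi_2^{alg}$ is supported on $D\times D$ (a divisor times a divisor), so $\Delta^G_S\circ\pi_2^{alg}$ is supported on $S\times D$, contributing to the divisor part. The crux is $\Delta^G_S\circ\pi_2^{tr}$. On cohomology, $(\pi_2^{tr})_\ast$ is the projector onto $H^2_{tr}(S)=T_{S,\QQ}$, and since $G_S$ acts trivially on $NS(S)$ but acts on $T_S$ through a primitive character (as $\dim T_S=\phi(k)$ forces $G_S$ to act on $T_S$ without nonzero invariants for $0<i<k$), the averaged action $\frac1k\sum_h h^\ast$ kills $T_{S,\QQ}$. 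Hence $(\Delta^G_S)_\ast\circ(\pi_2^{tr})_\ast=0$ on $H^\ast(S)$, which says precisely that $\Delta^G_S\circ\pi_2^{tr}$ is \emph{homologically trivial} as a correspondence in $H^4(S\times S)$. Combining, $\Delta^G_{S^r}\circ\pi_2^{S^r}$ is homologically equal to a cycle supported on $C\times D$ with $C$ a curve and $D$ a divisor in $S^r$.

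**Main obstacle.** The delicate point is the group-theoretic input that the averaging projector annihilates $T_{S,\QQ}$: one must verify that $G_S$ acts on the transcendental lattice with \emph{no} nonzero invariant classes, i.e. that $\sum_{h\in G_S}h^\ast=0$ on $T_{S,\QQ}$. This follows because $\dim_\QQ T_{S,\QQ}=\phi(k)$ and $G_S\cong\ZZ/k\ZZ$ acts on $T_{S,\QQ}$ making it a $\QQ(\zeta_k)$-vector space on which the generator acts by multiplication by a primitive $k$-th root of unity, so the trace $\sum_h h^\ast$ is multiplication by $\sum_{j}\zeta_k^j=0$. I would cite this eigenvalue description from \cite{LSY} (it is intrinsic to the defining conditions of an LSY/Sch\"utt surface). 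Everything else is bookkeeping with the support of the building-block projectors $\pi_0^S,\pi_1^S,\pi_4^S$ and Lieberman's lemma to track how composition with $\Delta^G_{S^r}$ preserves the external support dimensions.
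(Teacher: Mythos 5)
Your proposal is correct and follows essentially the same route as the paper: decompose $\pi_2^{S^r}$ as a sum of external products, reduce to the single factor carrying $\pi_2^S$, split that factor via theorem \ref{kmp} into $\pi_2^{S,tr}+\pi_2^{S,alg}$, observe that $\pi_2^{S,alg}$ is supported on divisor times divisor while $\Delta^G_S\circ\pi_2^{S,tr}$ vanishes in cohomology, and use that the $\pi_0^S$ factors are supported on $\{\oo_S\}\times S$ to get the curve--times--divisor support on $S^r\times S^r$. The only substantive difference is how the vanishing of $\frac1k\sum_h h^\ast$ on $T_{S,\QQ}$ is justified: you use the $\QQ(\zeta_k)$--module structure of $T_{S,\QQ}$ from \cite{LSY} and the vanishing of $\sum_j\zeta_k^j$, whereas the paper argues that the non--symplectic action kills $H^{2,0}(S)$ and then invokes the indecomposability of $T_S$ as a Hodge structure; both are valid and rest on the same input.

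One small imprecision: you keep the summands of $\pi_2^{S^r}$ containing two factors $\pi_1^S$ and assert that their support is ``automatically'' of the right shape. As written this is not justified (with two $\pi_1^S$ factors and no control on the support of $\pi_1^S$, the left--hand support need not be a curve), and $\pi_4^S$ does not occur in $\pi_2^{S^r}$ at all. The point is simply that $\pi_1^S=0$ (and $\pi_3^S=0$) for the Beauville--Voisin MCK decomposition of a $K3$ surface, since $H^1(S)=H^3(S)=0$; once you say this, those terms disappear and your reduction is exactly the paper's displayed formula $\pi_2^{S^r}=\pi_2^S\times\pi_0^S\times\cdots\times\pi_0^S+\cdots+\pi_0^S\times\cdots\times\pi_0^S\times\pi_2^S$.
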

    
    \begin{proof} Let us first do the $r=1$ case. Since the group $G_S\subset\aut(S)$ consists of non--symplectic automorphisms, we have
     \[ (\Delta^{G}_S)_\ast=0\colon\ \ \ H^{2,0}(S)\ \to\ H^{2,0}(S)\ .\]
     Let $T\subset H^2(S)$ denote the transcendental lattice. Since $T$ defines an indecomposable Hodge structure (i.e., every Hodge sub--structure of $T$ is either $T$ or $0$), we must have
     \[ (\Delta^{G}_S)_\ast=0\colon\ \ \ T\ \to\ T\ .\]
    Since $\Delta^G_S$ acts as the identity on $NS(S)$, this implies
     \[ \Delta^G_S\circ \pi_2^S = \pi^{S,alg}_{2}\ \ \ \hbox{in}\ H^4(S\times S)\ .\]
     But $\pi^{S,alg}_2$ is supported on divisor times divisor (theorem \ref{kmp}); this proves the case $k=1$.
     
     For arbitrary $r$, note that (by definition of the product MCK decomposition)
     \[ \pi_2^{S^r}= \pi_2^S\times \pi_0^S\times\cdots\times \pi_0^S + \cdots +  \pi_0^S\times\cdots\times \pi_0^S\times \pi_2^S\ \ \ \in A^{2r}(S^r\times S^r)\ .\]
     Thus,
       \[ \begin{split}       \Delta^G_{S^r}\circ \pi_2^{S^r} &= {1\over k}{\displaystyle\sum_{h\in G_S}}\ (\Gamma_h\times\cdots\times\Gamma_h)\circ      ( \pi_2^S\times \pi_0^S\times\cdots\times \pi_0^S + \cdots +  \pi_0^S\times\cdots\times \pi_0^S\times \pi_2^S)\\
                                         &= {1\over k} {\displaystyle\sum_{h\in G_S}}\ (\Gamma_h\circ \pi_2^S)\times (\Gamma_h\circ \pi_0^S)\times\cdots\times (\Gamma_h\circ \pi_0^S) \\
                                            & \ \ \ \ \ \ \ \ \ \  \ \ \ \ \ \ \ \   +\cdots +
                                                  (\Gamma_h\circ \pi_0^S)\times\cdots\times (\Gamma_h\circ \pi_0^S)  \times    (\Gamma_h\circ \pi_2^S)\\
                                          &= {1\over k}{\displaystyle\sum_{h\in G_S}} (\Gamma_h\circ \pi_2^S) \times \pi_0^S \times \cdots \times  \pi_0^S +\cdots +  \pi_0^S \times \cdots \times  \pi_0^S \times 
                                             (\Gamma_h\circ \pi_2^S)\\
                                          &= (\Delta^G_S\circ \pi_2^S)\times \pi_0^S\times\cdots\times \pi_0^S +\cdots +  \pi_0^S\times\cdots\times \pi_0^S\times (\Delta^G_S\circ \pi_2^S)\\
                                          &= \pi_2^{S,alg}\times  \pi_0^S\times\cdots\times \pi_0^S +\cdots +  \pi_0^S\times\cdots\times \pi_0^S\times  \pi_2^{S,alg}\ \ \ \hbox{in}\ H^{4r}(S^r\times S^r)\ .
                                          \end{split}\]
                            Here, the second line is because $\Gamma_h\circ \pi_0^S=\pi_0^S$ (proof of lemma \ref{idemp}), and the last line is the $r=1$ case treated above. The last line is clearly a cycle supported on curve times divisor, and so the lemma is proven.
                           \end{proof}

\section{Main result}

\begin{theorem}\label{main3} Let $S_3$ be as in theorem \ref{lsythm}, and let $X$ be the Hilbert scheme $X=(S_3)^{[3]}$. Let $G\subset\aut(X)$ be the group of natural automorphisms induced by the order $3$ cyclic group $G_{S_3}\subset\aut(S_3)$ of definition \ref{lsy}. Then
 \[ \begin{split}  (\Delta^G_X)_\ast &=0\colon\ \ A^i_{(j)}(X)\ \to\ A^i(X)\ \ \hbox{for}\ (i,j)\in \Bigl\{ (2,2), (4,4), (3,2),(6,2),(6,4),(5,2)\Bigr\}\ .\\
                               \end{split}\]
\end{theorem}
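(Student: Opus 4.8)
The plan is to transport everything to self--products of $S:=S_3$ through the de Cataldo--Migliorini isomorphism, and then to split the argument according to the parity of $j$. The natural automorphism $g=h^{[3]}$ corresponds, under the correspondences $\Gamma_\mu$, to the diagonal automorphism $h\times\cdots\times h$ of $S^{l(\mu)}$ (compare the proof of lemma \ref{idempx}), so the split injections of remark \ref{compat} are $G$--equivariant. First I would use them to reduce the vanishing of $(\Delta^G_X)_\ast$ on $A^i_{(j)}(X)$ to the vanishing of $(\Delta^G_{S^r})_\ast$ on $A^{i+l(\mu)-3}_{(j)}(S^{l(\mu)})$, where $r=l(\mu)\in\{1,2,3\}$. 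Multiplicativity of the product MCK decomposition writes each $A^\ast_{(j)}(S^r)$ as a sum of exterior products $a_1\times\cdots\times a_r$ with $a_m\in A^{i_m}_{(j_m)}(S)$ and $\sum_m j_m=j$; as the only nonzero pieces for the $K3$ surface $S$ are $A^0_{(0)}(S)$, $A^1_{(0)}(S)=NS(S)$, $A^2_{(0)}(S)=\QQ[\oo_S]$ and the transcendental part $A^2_{(2)}(S)=A^\ast(t_2(S))$, a short bookkeeping shows that each surviving summand contains exactly $j/2$ transcendental factors (the others being a fundamental class, a divisor, or $\oo_S$), and that many of the $\mu$--contributions vanish outright for dimension reasons.

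Since $G_S$ acts trivially on $NS(S)$ and fixes $\oo_S$ (proof of lemma \ref{comm}), the operator $(\Delta^G_{S^r})_\ast$ restricts to the identity on every algebraic factor. For the four cases $(2,2),(3,2),(5,2),(6,2)$ we have $j=2$, so each surviving summand has a single transcendental factor and $(\Delta^G_{S^r})_\ast$ reduces to $(\Delta^G_S)_\ast\otimes\ide$; everything then comes down to the single assertion that $(\Delta^G_S)_\ast=0$ on $A^2_{(2)}(S)=A^\ast(t_2(S))$. This follows from the $r=1$ step of lemma \ref{pi2k}: there one shows $(\Delta^G_S)_\ast=0$ on $T_S$, and hence $\Delta^G_S\circ\pi_2^S=\pi_2^{S,alg}$ in $H^4(S\times S)$. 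As $\pi_2^{S,alg}$ is orthogonal to the transcendental part, the idempotent endomorphism that $\Delta^G_S$ induces on the motive $t_2(S)$ is homologically trivial, hence zero by finite--dimensionality and the nilpotence theorem \ref{nilp}; therefore $(\Delta^G_S)_\ast=0$ on $A^\ast(t_2(S))$, settling these four cases.

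The remaining cases $(4,4)$ and $(6,4)$ have $j=4$, hence two transcendental factors, and there $(\Delta^G_{S^r})_\ast$ restricts to the diagonal average $D:=\frac1k\sum_{h\in G_S}(\Gamma_h\otimes\Gamma_h)$ acting on $A^\ast\bigl(t_2(S)^{\otimes2}\bigr)=A^2_{(2)}(S)\otimes A^2_{(2)}(S)$. This is the crux, because $D$ is \emph{not} homologically trivial and the argument above does not apply; the extra input is $\rho$--maximality. Since $S=S_3$ is $\rho$--maximal, $\dim T_S=\phi(3)=2$ and $T_S=H^{2,0}(S)\oplus H^{0,2}(S)$ carries no transcendental $(1,1)$--classes, a generator $h$ acting by $\zeta_3$ on $H^{2,0}$ and by $\zeta_3^2$ on $H^{0,2}$. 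An eigenvalue computation then shows that the $G$--invariant subspace of $H^2_{tr}(S)^{\otimes2}$ is precisely the two--dimensional $(2,2)$--piece $H^{2,0}\otimes H^{0,2}\oplus H^{0,2}\otimes H^{2,0}$, which is of Tate type. Because $t_2(S_3)$ is a finite--dimensional motive of CM type, these $(2,2)$--classes are algebraic, and the nilpotence theorem lets me lift the cohomological projectors onto the two Tate lines to genuine idempotents, identifying the image motive $\ima(D)=(t_2(S)^{\otimes2})^G$ with a sum of copies of $\mathbb{L}^2$. Such a motive carries no homologically trivial cycles, so $A^\ast_{hom}(\ima(D))=0$. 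As $A^4_{(4)}$ and $A^6_{(4)}$ are homologically trivial, and as $G$ acts trivially on the additional $\oo_S$--factor appearing in the case $(6,4)$, it follows that $D_\ast$ kills the relevant groups and hence $(\Delta^G_X)_\ast=0$ on $A^4_{(4)}(X)$ and $A^6_{(4)}(X)$. The main obstacle is exactly this last identification of $(t_2(S_3)^{\otimes2})^G$ as a sum of Lefschetz motives: it is the one place where $\rho$--maximality, through the CM (Shioda--Inose) structure and the resulting algebraicity of the $(2,2)$--classes, is indispensable, whereas the cases $j=2$ are comparatively formal.
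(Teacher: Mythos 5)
Your proposal is correct in substance and rests on exactly the same three inputs as the paper's proof: non--symplecticity kills the action of $\sum_g g^\ast$ on $T_S$ (hence $\Delta^G_S\circ\pi_2^S=\pi_2^{S,alg}$ in cohomology), $\rho$--maximality plus the Hodge conjecture for the Kummer surface $S_3$ makes the $G$--invariant part of $T_S\otimes T_S$ algebraic of type $(2,2)$, and finite--dimensionality bridges the gap from cohomology to Chow groups. The packaging, however, is genuinely different. The paper never decomposes $h(S^r)$ into tensor factors: it shows that the single correspondence $\Delta^G_{S^r}\circ\pi_j^{S^r}$ is homologically equivalent to a cycle with small support (lemmas \ref{pi2} and \ref{pi4}), applies the nilpotence theorem to that one correspondence, and concludes by dimension/support reasons. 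You instead split $h(S^r)$ into tensor products of $t_2(S)$ and Lefschetz motives, reduce the $j=2$ cases to the vanishing of the homologically trivial idempotent $\Delta^G_S$ on $t_2(S)$, and for $j=4$ identify $\ima\bigl(D|_{t_2(S)^{\otimes 2}}\bigr)$ with $\mathbb{L}^2\oplus\mathbb{L}^2$; this is cleaner conceptually (it even yields $A^4$ of that motive is zero outright, not merely its homologically trivial part) but costs you some bookkeeping the paper avoids.

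One statement needs repair: it is \emph{not} true that multiplicativity writes $A^\ast_{(j)}(S^r)$ as a sum of exterior products $a_1\times\cdots\times a_r$ --- Chow groups of products are far from being spanned by exterior products (e.g. $A^4_{(4)}(S^2)$ is much bigger than the image of $A^2_{(2)}(S)\otimes A^2_{(2)}(S)$, and likewise $A^\ast(t_2(S)^{\otimes 2})\neq A^2_{(2)}(S)\otimes A^2_{(2)}(S)$). What you actually need, and what your later reasoning implicitly uses, is the direct--sum decomposition of the \emph{motive} $h(S^r)$ into tensor products of $\mathbf{1}$, $h_2^{alg}(S)$, $t_2(S)$ and $\mathbb{L}^2$, together with the vanishing of $A^p\bigl(t_2(S)^{\otimes m}\otimes\mathbb{L}^s\bigr)$ outside the expected degree for the summands you discard. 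Most of these vanishings are formal ($A^p(t_2(S))=0$ for $p\neq 2$, $A^p(\mathbb{L}^s)=0$ for $p\neq s$), but the cases $(3,2)$ and $(5,2)$ also require $A^3\bigl(t_2(S)^{\otimes 2}\bigr)=0$, which is not formal: it follows from $\hom(t_2(S)\otimes\mathbb{L},t_2(S))=0$, i.e. from the decomposability $A^1(S\times S)=p_1^\ast NS(S)\oplus p_2^\ast NS(S)$ and the fact that $\pi_2^{S,tr}$ annihilates divisors. You should also record that $\Delta^G_{S^r}$ commutes with the \emph{refined} projectors $\pi_2^{S,alg}$ and $\pi_2^{S,tr}$ (immediate from $G_S$ acting trivially on $NS(S)$ and Lieberman's lemma), since lemma \ref{comm} only gives commutation with $\pi_0^S,\pi_2^S,\pi_4^S$. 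With these points supplied, your argument closes all six cases.
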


\begin{proof} In the course of this proof, let us write $S$ instead of $S_3$. The idea is to reduce to the action of automorphisms on $A^i(S^3)$ and $A^i(S^2)$ and $A^i(S)$. This reduction is possible thanks to the commutative diagram
  \begin{equation}\label{comdiag3}\begin{array} [c]{ccccccc}
        A^i_{(j)}(X) & \hookrightarrow & A^i_{(j)}(S^3)&\oplus & A^{i-1}_{(j)}(S^2)&\oplus & A^{i-2}_{(j)}(S)\\
          &&&&&&\\
        \ \ \ \  \downarrow {\scriptstyle (\Delta_X^G)_\ast} &&\ \ \ \downarrow {\scriptstyle (\Delta^G_{S^3})_\ast} && \ \ \ \downarrow {\scriptstyle (\Delta^G_{S^2})_\ast}&&\ \ \  \downarrow {\scriptstyle (\Delta^G_{S})_\ast}\\
         &&&&&&\\
         A^i_{(j)}(X) & \hookrightarrow & A^i_{(j)}(S^3)&\oplus & A^{i-1}_{(j)}(S^2)&\oplus & A^{i-2}_{(j)}(S)\\
         \end{array}\end{equation}
         
   Here, $\Delta^G_{S^r}$ is as in lemma \ref{pi2k}. This diagram commutes because of the construction of natural automorphisms. Horizontal arrows are injective because of remark \ref{compat}.

  To handle the action of $\Delta^G_{S^r}$ on $A^i_{(j)}(S^r)$ for $r=1,2,3$, we establish two lemmas:
  
  \begin{lemma}\label{pi2} There are homological equivalences
    \[  \begin{split}  \Delta^G_{S^3}\circ \pi_2^{S^3} = \gamma_2^{S^3}\ \ \ \ &\hbox{in}\ H^{12}(S^3\times S^3)\ ,\\
                            \Delta^G_{S^2}\circ \pi_2^{S^2} = \gamma_2^{S^2}\ \ \ \ &\hbox{in}\ H^{8}(S^2\times S^2)\ ,\\   
                           \Delta^G_{S}\circ \pi_2^{S} = \gamma_2^{S}\ \ \ \ &\hbox{in}\ H^{4}(S\times S)\ ,\\
                           \end{split}\]
              where $\gamma_2^{S^3}$ (resp. $\gamma_2^{S^2}$ resp. $\gamma_2^S$) is a cycle in 
          \[ \begin{split} &\ima\Bigl(  A_6(V_{2,3}\times W_{2,3})\ \to\ A^6(S^3\times S^3)\Bigr)\ \\    
             \hbox{(resp.\ }   &\ima\Bigl(  A_4(V_{2,2}\times W_{2,2})\ \to\ A^4(S^2\times S^2)\Bigr)\ ,\\    
              \hbox{(resp.\ }   &\ima\Bigl(  A_2(V_{2,1}\times W_{2,1})\ \to\ A^2(S\times S)\Bigr)\ \hbox{\ )},\\  
              \end{split}\]  
   and $V_{2,r}\subset S^r$ is a closed subvariety of codimension $2r-1$, and $W_{2,r}\subset S^r$ is closed of codimension $1$.          
   \end{lemma}  
   
   \begin{proof} This is a special case of lemma \ref{pi2k}.
   \end{proof}

   \begin{lemma}\label{pi4} There are homological equivalences
    \[  \begin{split}  \Delta^G_{S^3}\circ \pi_4^{S^3} = \gamma_4^{S^3}\ \ \ \ &\hbox{in}\ H^{12}(S^3\times S^3)\ ,\\
                            \Delta^G_{S^2}\circ \pi_4^{S^2} = \gamma_4^{S^2}\ \ \ \ &\hbox{in}\ H^{8}(S^2\times S^2)\ ,\\   
                           \end{split}\]
              where $\gamma_4^{S^3}$ (resp. $\gamma_4^{S^2}$) is a cycle in 
          \[ \begin{split} &\ima\Bigl(  A_6(V_{4,3}\times W_{4,3})\ \to\ A^6(S^3\times S^3)\Bigr)\ \\    
             \hbox{(resp.\ }   &\ima\Bigl(  A_4(V_{4,2}\times W_{4,2})  \ \to\ A^4(S^2\times S^2)\Bigr)\ ,\\    
              \end{split}\]  
   and $V_{4,3}, W_{4,3}\subset S^3$ are closed subvarieties of codimension $4$ resp. $2$, and $V_{4,2}, W_{4,2}\subset S^2$ are closed subvarieties of codimension $2$.          
   \end{lemma}     
   
   \begin{proof} Here we will use the fact that $S=S_3$ is $\rho$--maximal (i.e. the Picard number $\rho(S_3)$ is $20$). This means that the transcendental lattice $T\subset H^2(S)$ has rank $2$
   and injects (under the natural map $H^2(S)\to H^2(S,\C)$) into $H^{2,0}\oplus H^{0,2}$.
   It follows that (under the natural map $H^2(S)\to H^2(S,\C)$)
     \[ T\otimes T\ \subset\ H^{4,0}(S^2)\oplus H^{2,2}(S^2)\oplus H^{0,4}(S^2)\ .\]
     Let $h\in G_S$ be a generator. Since $h$ is non--symplectic, $h^\ast$ acts on $H^{2,0}$ as multiplication by a primitive $3$rd root of unity $\nu$. It follows that
     \[ (h\times h)^\ast=\nu^2\cdot\ide\colon\ \ \ H^{4,0}(S^2)\ \to\ H^{4,0}(S^2)\ ,\]
     and hence (since $\nu^2\not=1$)
     \[ (\Delta^G_{S^2})_\ast=0\colon\ \ \ H^{4,0}(S^2)\ \to\ H^{4,0}(S^2)\ .\]
     For the same reason, we also have
      \[ (\Delta^G_{S^2})_\ast=0\colon\ \ \ H^{0,4}(S^2)\ \to\ H^{0,4}(S^2)\ .\]  
      It follows that
      \[  (\Delta^G_{S^2})_\ast (T\otimes T) =   (\Delta^G_{S^2})_\ast \bigl( (T\otimes T)\cap F^2\bigr)\ \ \ \subset\ H^4(S^2)\ \]
      (here $F^\ast$ denotes the Hodge filtration on $H^\ast(-,\C)$).
      But $H^4(S^2)\cap F^2$ is generated by codimension $2$ cycles (indeed, $S$ is a Kummer surface, and so the Hodge conjecture is true for $S^r$ since it is true for self--products of abelian surfaces \cite[7.2.2]{Ab}). This means that there exist a codimension $2$ subvariety $V\subset S^2$ and a cycle $\gamma$ supported on $V\times V$ such that
      \[ \Delta^G_{S^2}\circ (\pi_2^{S,tr}\times \pi_2^{S,tr})-\gamma =0\ \ \ \hbox{in}\ H^{8}(S^2\times S^2)\ .\] 
      Next, let us write
      \[ H^2(S)=T\oplus N\ ,\]
      where $N:=NS(S)$.
      The action of $\Delta^G_{S^2}$ on $T\otimes N$ and on $N\otimes T$ is $0$. Indeed,
      \[ (h\times h)^\ast =\nu\cdot \ide\times \ide\colon\ \ \ T\otimes N\ \to\ T\otimes N\ ,\]
      and so
      \[ (\Delta^G_{S^2})_\ast = (\Delta^G_S\times\Delta_S)_\ast=0\colon\ \ \ T\otimes N\ \to\ T\otimes N\ .\]
      This means that
      \[ \Delta^G_{S^2}\circ (\pi_2^{S,tr}\times \pi_2^{S,alg}) =\Delta^G_{S^2}\circ ( (\pi_2^{S,alg}\times \pi_2^{S,tr})    =0\ \ \ \hbox{in}\ H^8(S^2\times S^2)\ .\]   
      The correspondences $\pi_0^S\times \pi_4^S$ and $\pi_4^S\times \pi_0^S$ are obviously supported on $V\times V\subset S^2\times S^2$ for some codimension $2$ subvariety $V\subset S^2$.
      It follows that
      \[  \Delta^G_{S^2}\circ \pi_4^{S^2} = \Delta^G_{S^2}\circ (\pi_2^{S,tr}\times \pi_2^{S,tr} + \pi_2^{S,alg}\times \pi_2^{S,alg} + \pi_0^S\times \pi_4^S + \pi_4^S\times \pi_0^S) =\gamma^\prime\ \ \ \hbox{in}\ H^8(S^2\times S^2)\ ,\]
      where $\gamma^\prime$ is supported on $V\times V\subset S^2\times S^2$, for $V\subset S^2$ of codimension $2$.      
       This proves the statement for $S^2$.
       
       The statement for $S^3$ follows immediately. Indeed, we have
       \[ \pi_4^{S^3} = \pi_0^S\times \pi_4^{S^2} + \pi_4^{S^2}\times \pi_0^S + \pi_4^{S^2}\times \pi_0^S\ \ \ \hbox{in}\ A^6(S^3\times S^3)\ ,\]
       where $\pi_0^S$ in the first (resp. second, resp. third) factor lies in the first (resp. second, resp. third) copy of $S$.
       But $\Gamma_h\circ \pi_0^S=\pi_0^S$ (proof of lemma \ref{idemp}), and so
       \[  \Delta^G_{S^3}\circ ( \pi_0^S\times \pi_4^{S^2}) = \pi_0^S\times (\Delta^G_{S^2}\circ \pi_4^{S^2})\ \ \ \hbox{in}\ A^6(S^3\times S^3)\ ,\]
       which (by the above) is homologically supported on $V_{4,3}\times W_{4,3}\subset S^3\times S^3$, where codim. $V_{4,3}=4$, codim. $W_{4,3}=2$.       
       \end{proof}

 We are now in position to wrap up the proof of theorem \ref{main3}. 
 Let us first consider $0$--cycles, i.e. $i=6$. The commutative diagram (\ref{comdiag3}) simplifies to
   \begin{equation}\label{diagsimp} \begin{array}[c]{ccc}
       A^6_{(j)}(X) & \hookrightarrow & A^6_{(j)}(S^3)\\
        &&\\
          \ \ \ \  \downarrow {\scriptstyle (\Delta_X^G)_\ast} &&\ \ \ \downarrow {\scriptstyle (\Delta^G_{S^3})_\ast} \\
          &&\\
     A^6_{(j)}(X) & \hookrightarrow & A^6_{(j)}(S^3)\\
     \end{array}\end{equation}
  In case $0<j<6$ (i.e. $j=2$ or $4$), we need to prove that
  \[  (\Delta^G_X)_\ast A^6_{(j)}(X) = 0\ ,\]
  which (in view of the above diagram) reduces to proving that
  \begin{equation}\label{0cy} (\Delta^G_{S^3})_\ast   A^6_{(j)}(S^3) = (\Delta^G_{S^3}\circ \pi^{S^3}_{12-j})_\ast A^6(S^3)=0\ \ \ \hbox{for\ }j=2,4\ .\end{equation}
  In view of lemma \ref{comm}, we have
  \[  \Delta^G_{S^3}\circ \pi^{S^3}_{12-j} = \pi^{S^3}_{12-j}\circ \Delta^G_{S^3} = {}^t (  \Delta^G_{S^3}\circ \pi^{S^3}_{j})\ \ \ \hbox{for\ }j=2,4\ .\]   
  In view of lemmas \ref{pi2} and \ref{pi4}, it follows that
   \begin{equation}\label{later}  \Delta^G_{S^3}\circ \pi^{S^3}_{12-j} -\gamma  \ \ \ \in A^6_{hom}(S^3\times S^3)\ \ \ \hbox{for\ }j=2,4\ ,\end{equation}
   where $\gamma$ is some cycle with support on $D\times S^3$ with $D\subset S^3$ a divisor. (Indeed, for $j=2$ one may take $\gamma={}^t (\gamma^{S^3}_{2})$, and for $j=4$
   one may take $\gamma={}^t (\gamma^{S^3}_4)$, which is supported on $(\hbox{codim.\ }2)\times(\hbox{codim.\ }4)$.)
   Applying the nilpotence theorem (theorem \ref{nilp}), it follows that there exists $N\in\NN$ such that
   \[   \Bigl( \Delta^G_{S^3}\circ \pi^{S^3}_{12-j} -\gamma \Bigr)^{\circ N}=0  \ \ \ \hbox{in}\ A^6_{}(S^3\times S^3)\ .\]
   Upon developing, this implies that
   \[       \bigl(\Delta^G_{S^3}\circ \pi^{S^3}_{12-j} \bigr)^{\circ N} = Q_1 + Q_2 + \cdots + Q_N   \ \ \ \hbox{in}\ A^6_{}(S^3\times S^3)\ ,\]
   where the $Q_i$ are compositions of correspondences in which $\gamma$ occurs at least once. The left--hand side is just $\Delta^G_{S^3}\circ \pi^{S^3}_{12-j}$ (since $    \Delta^G_{S^3}\circ \pi^{S^3}_{12-j} $ is idempotent, corollary \ref{idemp}). The right--hand side is supported on $D\times S^3$ (since $\gamma$ is), and so does not act on $0$--cycles. This proves equality (\ref{0cy}).
  
    
  We now consider the line $i=j$, i.e. the ``deepest part'' $A^i_{(i)}$ of the Chow groups. Diagram (\ref{comdiag3}) simplifies to
       \begin{equation}\label{diagsimp2} \begin{array}[c]{ccc}
       A^i_{(i)}(X) & \hookrightarrow & A^i_{(i)}(S^3)\\
        &&\\
          \ \ \ \  \downarrow {\scriptstyle (\Delta_X^G)_\ast} &&\ \ \ \downarrow {\scriptstyle (\Delta^G_{S^3})_\ast} \\
          &&\\
     A^i_{(i)}(X) & \hookrightarrow & A^i_{(i)}(S^3)\\
     \end{array}\end{equation}  
    
      In view of lemmas \ref{pi2} and \ref{pi4}, it follows that
   \[  \Delta^G_{S^3}\circ \pi^{S^3}_{i} -\gamma  \ \ \ \in A^6_{hom}(S^3\times S^3)\ ,\]
   where $\gamma$ is some cycle that acts trivially on $A^i(S^3)$. (Indeed, for $i=2$ one may take $\gamma=\gamma^{S^3}_{2}$, and for $i=4$
   one may take $\gamma=\gamma^{S^3}_4$.) Applying the nilpotence theorem, it follows there exists $N\in\NN$ such that
    \[  \Bigl( \Delta^G_{S^3}\circ \pi^{S^3}_{i} -\gamma \Bigr)^{\circ N}=0\ \ \ \hbox{in}\  A^{6}_{}(S^3\times S^3)\ .\]
    Upon developing, this implies that
    \begin{equation}\label{34}    \bigl( \Delta^G_{S^3}\circ \pi^{S^3}_{i}  \bigr)^{\circ N} = Q_1+Q_2+\cdots+Q_N\ \ \ \hbox{in}\  A^{6}_{}(S^3\times S^3)\ ,\end{equation}
    where the $Q_i$ are correspondences composed with $\gamma$. It follows that the right--hand side does not act on $A^6(S^3)$. The left--hand side is $  \Delta^G_{S^3}\circ \pi^{S^3}_{i}  $ (corollary \ref{idemp}), and so
    \[      ( \Delta^G_{S^3}){}_\ast =0\colon\ \ \ A^i_{(i)}(S^3)\ \to\ A^i(S^3) \ \ \ \hbox{for}\ i=2,4 \ .\]
   In view of the commutative diagram (\ref{diagsimp2})), it follows that also
    \[  (\Delta^G_{X})_\ast=0\colon\ \ \ A^i_{(i)}(X)\ \to\ A^i(X)\ \ \ \hbox{for}\ i=2,4\ .\]   
    
    We now consider $i=5$, i.e. $1$--cycles $A^{5}$. Diagram (\ref{comdiag3}) simplifies to
       \begin{equation}\label{diagsimp3} \begin{array}[c]{ccccc}
       A^5_{(j)}(X) & \hookrightarrow & A^5_{(j)}(S^3) &\oplus & A^4_{(j)}(S^2)   \\
        &&&&\\
          \ \ \ \  \downarrow {\scriptstyle (\Delta_X^G)_\ast} &&\ \ \ \downarrow {\scriptstyle (\Delta^G_{S^3})_\ast} &&  \ \ \ \downarrow {\scriptstyle (\Delta^G_{S^2})_\ast}    \\
          &&&&\\
     A^5_{(j)}(X) & \hookrightarrow & A^5_{(j)}(S^3) &\oplus&  A^4_{(j)}(S^2) \\
     \end{array}\end{equation}  
    
   For the $j=2$ case, we recall (equation (\ref{later})) that 
   \[     \Delta^G_{S^3}\circ \pi^{S^3}_{8} -\gamma  \ \ \ \in A^6_{hom}(S^3\times S^3)    \ ,\]
   where $\gamma$ is a cycle supported on $(\hbox{codim.\ }2)\times(\hbox{codim.\ }4)$. It follows that $\gamma$ does not act on $A^5$ (for dimension reasons). As before, applying the nilpotence theorem plus corollary \ref{idemp}, we find that
   \[     (  \Delta^G_{S^3}\circ \pi^{S^3}_{8})_\ast=0\colon\ \ \ A^5(S^3)\ \to\ A^5(S^3)    \ .\]
   This is equivalent to
   \begin{equation}\label{A5}   (  \Delta^G_{S^3})_\ast=0\colon\ \ \ A^5_{(2)}(S^3)\ \to\ A^5(S^3)  \ .\end{equation}
   
   Taking the transpose correspondences of lemma \ref{pi2} (and using lemma \ref{comm}), we also find
   \[ \Delta^G_{S^2}\circ \pi^{S^2}_6 - \gamma \ \ \ \in A^4_{hom}(S^2\times S^2)\ ,\]
   where $\gamma$ is a cycle supported on divisor times curve (indeed, one may take $\gamma={}^t \gamma^{S^2}_{2}$). Once more applying nilpotence (plus idempotence), we find that
   \[   (\Delta^G_{S^2}\circ \pi^{S^2}_6)_\ast =0\colon\ \ \ A^4_{}(S^2)\ \to\ A^4(S^2) \ ,\]
   which is equivalent to  
   \begin{equation}\label{A4}   (\Delta^G_{S^2})_\ast =0\colon\ \ \ A^4_{(2)}(S^2)\ \to\ A^4(S^2)\ .\end{equation}
   Combining equalities (\ref{A5}) and (\ref{A4}) implies that
   \[    (  \Delta^G_{X})_\ast=0\colon\ \ \ A^5_{(2)}(X)\ \to\ A^5(X)    \ ,\]
   in view of commutative diagram (\ref{diagsimp3}).
         
    Finally, the statement for $A^3_{(2)}$ follows from the commutative diagram
    \begin{equation}\label{final} \begin{array}[c]{ccccc}
       A^3_{(2)}(X) & \hookrightarrow & A^3_{(2)}(S^3) &\oplus & A^2_{(2)}(S^2)    \\
        &&&& \\
          \ \ \ \  \downarrow {\scriptstyle (\Delta_X^G)_\ast} &&\ \ \ \downarrow {\scriptstyle (\Delta^G_{S^3})_\ast} &&\ \ \ \downarrow {\scriptstyle (\Delta^G_{S^2})_\ast} \\
          &&&&\\
     A^3_{(2)}(X) & \hookrightarrow & A^3_{(2)}(S^3)  &\oplus & A^2_{(2)}(S^2)    \\   \\
     \end{array}  \end{equation}
  combined with the corresponding statement for $S^3$ and for $S^2$. The statement for $S^3$ is proven by recalling that (from the $i=4$ case of equality (\ref{34}) above)
  \[     \bigl( \Delta^G_{S^3}\circ \pi^{S^3}_{4}  \bigr)^{} = Q_1+Q_2+\cdots+Q_N\ \ \ \hbox{in}\  A^{6}_{}(S^3\times S^3)\ ,\]
  where the $Q_j$ are (composed with $\gamma^{S^3}_4$ and hence) supported on $(\hbox{codim.\ }4)\times(\hbox{codim.\ }2)$. For dimension reasons, the $Q_j$ act trivially on $A^3(S^3)$, and so
  \[     \bigl( \Delta^G_{S^3}\circ \pi^{S^3}_{4}  \bigr){}_\ast=0\colon\ \ \ A^3(S^3)\ \to\ A^3(S^3)\ .\]
  This is equivalent to
  \begin{equation}\label{32}   (\Delta^G_{S^3})_\ast=0\colon\ \ \ A^3_{(2)}(S^3)\ \to\ A^3(S^3)\ .\end{equation}
  The statement for $S^2$ is proven by noting that
  \[ \Delta^G_{S^2}\circ \pi_2^{S^2}- \gamma\ \ \ \in A^4_{hom}(S^2\times S^2)\ ,\]
  where $\gamma=\gamma^{S^2}_2$ is supported on divisor times divisor (lemma \ref{pi2}). Using nilpotence and idempotence, this implies
  \[  \Delta^G_{S^2}\circ \pi_2^{S^2}= Q_1+\cdots +Q_N\ \ \ \hbox{in}\ A^4(S^2\times S^2)\ ,\]
  where the $Q_j$ (are supported on divisor times divisor and hence) act trivially on $A^2_{(2)}(S^2)\subset A^2_{hom}(S^2)=A^2_{AJ}(S^2)$.  It follows that
  \[     \bigl( \Delta^G_{S^2}\circ \pi_2^{S^2}  \bigr){}_\ast=0\colon\ \ \ A^2_{(2)}(S^2)\ \to\ A^2(S^2)\ ,\]
  which is equivalent to
  \begin{equation}\label{22}   (\Delta^G_{S^2})_\ast=0\colon\ \ \ A^2_{(2)}(S^2)\ \to\ A^3(S^2)\ .\end{equation}
  
  Taken together, equations (\ref{32}) and (\ref{22}) imply that
  \[ (\Delta^G_X)_\ast=0\colon\ \ \ A^3_{(2)}(X)\ \to\ A^3(X)\ ,\]
  in view of diagram (\ref{final}).
    \end{proof}

\begin{remark} Let $X$ and $G$ be as in theorem \ref{main3}. Presumably, it is also possible to prove
  \[ A^6_{(6)}(X)\cap A^6(X)^G = A^6_{(6)}(X)\ ,\]
  in accordance with conjecture \ref{theconj}. Indeed, one can prove that
  \[ \Gamma:= (\Delta^G_{S^3}-\Delta_{S^3})\circ \pi_6^{S^3}  \ \ \ \in A^6(S^3\times S^3)  \]
  maps to $0$ under the restriction
  \[ H^{12}(S^3\times S^3)\ \to\ H^{12}\bigl( (S^3\times S^3)\setminus (V\times V)\bigr)\ ,\]
  where $V\subset S^3$ is some subvariety of codimension $2$.
  The problem is to find a cycle $\gamma$ supported on $V\times V$ and such that
  \[ \Gamma=\gamma\ \ \ \hbox{in}\ H^{12}(S^3\times S^3)\ ;\]
  that is, one needs to solve a special case of the ``Voisin standard conjecture'' \cite[Conjecture 1.6]{V0}.
  Perhaps, this can be done using the fact that $\rho(S)=20$ ? (I have tried a bit, then given up as things got messy...)
\end{remark}

\section{Some corollaries}

Theorem \ref{main3} can be extended to hyperk\"ahler varieties birational to $X$:

\begin{corollary} Let $X$ and $G$ be as in theorem \ref{main3}. Let $X^\prime$ be a hyperk\"ahler variety birational to $X$, and let $G^\prime$ denote the group of rational self--maps of $X^\prime$ induced by $G$. Then
  \[     A^i_{(j)}(X^\prime)\cap A^i(X^\prime)^{G^\prime} =
                                                                            0 \ \ \hbox{if} \ (i,j)\in \  \Bigl\{ (2,2), (4,4), (3,2),(5,2),(6,2),(6,4)\Bigr\} .\]
\end{corollary}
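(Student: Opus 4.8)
The plan is to deduce the final corollary directly from theorem \ref{main3} by transferring the vanishing statements along the birational isomorphism, using the birational invariance of the MCK bigrading (proposition \ref{birat}) together with the $G$-equivariance of that invariance. First I would observe that a birational map $\phi\colon X\dashrightarrow X'$ between hyperk\"ahler varieties induces, by Rie\ss's theorem as packaged in proposition \ref{birat}, canonical isomorphisms $A^i_{(j)}(X)\cong A^i_{(j)}(X')$ for all $i,j$. The key extra point is that these isomorphisms intertwine the two group actions: the rational self-maps $G'$ on $X'$ are induced by $G$ precisely through $\phi$, so under the identification $A^i_{(j)}(X)\cong A^i_{(j)}(X')$ the action of $(\Delta^G_{X})_\ast$ corresponds to the action of $(\Delta^{G'}_{X'})_\ast$.

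The main step is therefore to verify this equivariance rather than the vanishing itself. I would argue as follows. By Rie\ss's result, $\phi$ induces an isomorphism of Chow motives respecting the algebra structure, hence respecting the CK/MCK projectors $\pi^X_i$ and $\pi^{X'}_i$. On the other hand, the graph correspondence realizing $\phi$ conjugates $\Gamma_g$ to $\Gamma_{g'}$ for the corresponding elements $g\in G$, $g'\in G'$ (this is exactly the meaning of ``$G'$ induced by $G$''); summing over the group and dividing by $|G|=k$ shows that $\phi$ conjugates $\Delta^G_X$ into $\Delta^{G'}_{X'}$. Combining these two facts, the isomorphism $A^i_{(j)}(X)\xrightarrow{\sim} A^i_{(j)}(X')$ carries $\ker\bigl((\Delta^G_X)_\ast\bigr)$ onto $\ker\bigl((\Delta^{G'}_{X'})_\ast\bigr)$ and likewise for the $G$-invariant parts. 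Since $A^i_{(j)}(X)\cap A^i(X)^G = \ima\bigl((\Delta^G_X)_\ast\colon A^i_{(j)}(X)\to A^i_{(j)}(X)\bigr)$, and theorem \ref{main3} says precisely that $(\Delta^G_X)_\ast=0$ on $A^i_{(j)}(X)$ for the six pairs $(i,j)$ listed, the same vanishing holds for $(\Delta^{G'}_{X'})_\ast$ on $A^i_{(j)}(X')$, giving $A^i_{(j)}(X')\cap A^i(X')^{G'}=0$ for exactly those pairs.

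I expect the genuine obstacle to be purely bookkeeping of the equivariance: one must make sure that the graph of the birational map $\phi$ (which is only a rational map, so $\Gamma_\phi$ lives in $A^\ast(X\times X')$ via its closure) really does conjugate the finite-order correspondences $\Gamma_g$ correctly, and that this conjugation is compatible with Rie\ss's algebra isomorphism. In the hyperk\"ahler setting this is standard because the birational map is an isomorphism in codimension one and the groups act by automorphisms that extend the rational action, but spelling out that $\Gamma_\phi\circ \Delta^G_X = \Delta^{G'}_{X'}\circ \Gamma_\phi$ as correspondences (rather than merely after passing to cohomology) requires invoking that both sides are detected by the MCK bigrading, which is itself birationally invariant. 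Once this single compatibility is in hand, the corollary is immediate, and no new cycle-theoretic input beyond theorem \ref{main3} and proposition \ref{birat} is needed.
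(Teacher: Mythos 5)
Your proposal is correct and takes essentially the same route as the paper, whose entire proof is the one-line observation that the corollary follows from theorem \ref{main3} combined with proposition \ref{birat}. The equivariance of the Rie\ss{} isomorphism with respect to the induced group actions, which you rightly flag as the only point needing verification, is left implicit in the paper but is exactly the compatibility your argument supplies.
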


\begin{proof} This follows from theorem \ref{main3} combined with proposition \ref{birat}.
\end{proof}

\begin{corollary}\label{quot3} Let $X$ and $G\subset\aut(X)$ be as in theorem \ref{main3}. Let $Y$ be the quotient variety $Y:=X/G$. 

\noindent
(\rom1) $Y$ has a self--dual MCK decomposition.

\noindent
(\rom2) \[ \begin{split} A^i(Y)&= \bigoplus_{j\le 0} A^i_{(j)}(Y)\ \ \ \hbox{for}\ i\le 3\ ,\\
                                    A^5(Y)&=A^5_{(0)}(Y)\oplus A^5_{(4)}(Y)\ ,\\
                                    A^6(Y)&=A^6_{(0)}(Y)\oplus A^6_{(6)}(Y)\ .\\
                                    \end{split}\]
                                    
     \end{corollary}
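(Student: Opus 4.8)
The plan is to deduce both parts from material already in hand. Part (\rom1) is nothing but Lemma \ref{quotientmck} applied to $X=(S_3)^{[3]}$ with its group $G$ of natural automorphisms: the self--dual MCK decomposition $\{\pi_j^Y\}$ is given explicitly by $\pi_j^Y=\tfrac1k\,\Gamma_p\circ\pi_j^X\circ {}^t\Gamma_p$, where $p\colon X\to Y$ is the quotient morphism. So the real content is part (\rom2), and the key step there is to identify the graded pieces of $A^\ast(Y)$ with the $G$--invariant graded pieces of $A^\ast(X)$.

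Concretely, I would show that pullback along $p$ induces isomorphisms
\[ p^\ast\colon\ A^i_{(j)}(Y)\ \xrightarrow{\ \cong\ }\ A^i_{(j)}(X)\cap A^i(X)^G\ \qquad\text{for all } i,j. \]
Recall that $p^\ast=({}^t\Gamma_p)_\ast$ is injective with image $A^i(X)^G=(\Delta^G_X)_\ast A^i(X)$, and that ${}^t\Gamma_p\circ\Gamma_p=\sum_{g\in G}\Gamma_g=k\,\Delta^G_X$ (exactly as in the proof of Lemma \ref{sm}). Substituting the definition of $\pi^Y_{2i-j}$ gives ${}^t\Gamma_p\circ\pi^Y_{2i-j}=\Delta^G_X\circ\pi^X_{2i-j}\circ {}^t\Gamma_p$, hence $p^\ast\circ(\pi^Y_{2i-j})_\ast=(\Delta^G_X\circ\pi^X_{2i-j})_\ast\circ p^\ast$. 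Since $\Delta^G_X$ commutes with $\pi^X_{2i-j}$ (Lemma \ref{idempx}), the operator $(\Delta^G_X\circ\pi^X_{2i-j})_\ast$ is precisely the projector onto $A^i_{(j)}(X)\cap A^i(X)^G$; applying this to $b=(\pi^Y_{2i-j})_\ast b\in A^i_{(j)}(Y)$ shows $p^\ast b\in A^i_{(j)}(X)\cap A^i(X)^G$. As Lemma \ref{idempx} also forces $A^i(X)^G=\bigoplus_j\bigl(A^i_{(j)}(X)\cap A^i(X)^G\bigr)$, and $p^\ast$ maps the $j$--piece into the $j$--piece, it restricts to the claimed isomorphism on each graded piece.

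With this identification, the decompositions follow by combining an a priori restriction on the non--vanishing pieces with Theorem \ref{main3}. For the a priori part I would invoke the graded de Cataldo--Migliorini isomorphism of Remark \ref{compat},
\[ A^i_{(j)}(X)\ \cong\ A^{i-2}_{(j)}(S)\oplus A^{i-1}_{(j)}(S^{(2)})\oplus A^{i}_{(j)}(S^{(3)})\ , \]
together with the fact that a $K3$ surface has $A^m_{(j)}(S)=0$ unless $j\in\{0,2\}$ (with $A^2_{(2)}(S)=A^2_{hom}(S)$). Because the bigrading is multiplicative and $\Sy_r$--equivariant, only even $j\ge 0$ occur on the symmetric products $S^{(\mu)}$, and a codimension count bounds $j$ from above. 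One obtains $A^i_{(j)}(X)=0$ unless $j\in\{0,2\}$ when $i\le 3$, unless $j\in\{0,2,4\}$ when $i=5$, and unless $j\in\{0,2,4,6\}$ when $i=6$. Via $p^\ast$ the same vanishing holds on $Y$; in particular all pieces with $j<0$ vanish, so $\bigoplus_{j\le 0}A^i_{(j)}(Y)=A^i_{(0)}(Y)$.

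Finally, Theorem \ref{main3} (transported to $Y$ by the isomorphism above) gives $A^i_{(j)}(Y)=0$ for the pairs $(2,2),(3,2),(5,2),(6,2),(6,4)$. Matching these against the a priori lists yields the statement: for $i\le 3$ the only possibly non--zero piece is $A^i_{(0)}(Y)$ (the cases $i\le 1$ being automatic), so $A^i(Y)=A^i_{(0)}(Y)$; for $i=5$ killing $(5,2)$ leaves $A^5(Y)=A^5_{(0)}(Y)\oplus A^5_{(4)}(Y)$; and for $i=6$ killing $(6,2)$ and $(6,4)$ leaves $A^6(Y)=A^6_{(0)}(Y)\oplus A^6_{(6)}(Y)$. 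I expect the main obstacle to be the bookkeeping in the a priori step—pinning down exactly which $A^{\ast}_{(j)}(S^{(\mu)})$ vanish through Remark \ref{compat} and the multiplicativity of the bigrading—since the correspondence computation establishing the $p^\ast$--isomorphism, though a little lengthy, is entirely routine.
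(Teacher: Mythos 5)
Your proposal is correct and follows the same route as the paper: part (\rom1) is exactly Lemma \ref{quotientmck}, and part (\rom2) is obtained by transporting Theorem \ref{main3} to $Y$ via the quotient map and discarding the graded pieces that vanish for support/codimension reasons; your explicit verification that $p^\ast$ identifies $A^i_{(j)}(Y)$ with $A^i_{(j)}(X)\cap A^i(X)^G$ is a detail the paper leaves implicit but is correct and worth having. One caution: your ``a priori'' step over-claims when it asserts $A^i_{(j)}(X)=0$ for $j<0$ and $i\le 3$. The multiplicativity and $\Sy_r$--equivariance of the bigrading control products of cycles, not the decomposition of an arbitrary cycle on $S^{(\mu)}$, so the vanishing of negative pieces in low codimension does not follow formally from $A^m_{(j)}(S)=0$ for $j\notin\{0,2\}$; it is an open problem of Bloch--Beilinson/Murre type, and this is precisely why the statement (and the paper, which invokes the known vanishing $A^i_{(j)}(S^{[r]})=0$ only for $i\ge 2r-1$ and $j<0$) is phrased as $\bigoplus_{j\le 0}$ for $i\le 3$ rather than as $A^i_{(0)}$. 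Fortunately this over-claim is not needed: for $i\le 3$ you only have to kill the pieces with $j>0$ (which your argument does, via Theorem \ref{main3} for $j=2$ and support counts for $j\ge 4$), while for $i=5,6$ the negative pieces do vanish by the easy support argument since $2i-j>4r-2$ forces the relevant projectors to be supported over points.
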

  
  \begin{proof} Point (\rom1) follows from lemma \ref{quotientmck}.                                          
      Point (\rom2) is just a translation of theorem \ref{main3}, combined with the fact that it is known that 
      \[ A^i_{(j)}(S^{[r]})=0\ \ \ \hbox{for\ }i\ge 2r-1\ \hbox{and\ }j<0\ .\]
      
  \end{proof}

Corollary \ref{quot3} has consequences for the multiplicative structure of the Chow ring of the quotient variety $Y$: 
                                    
\begin{corollary}\label{prod3}
Let $X$ and $G\subset\aut(X)$ be as in theorem \ref{main3}. Let $Y$ be the quotient variety $Y:=X/G$. 
For any $r\in\NN$, let 
  \[E^\ast(Y^r)\subset A^\ast(Y^r)\] 
  denote the subalgebra generated by (pullbacks of) $A^1(Y), A^2(Y), A^3(Y)$ and the diagonal $\Delta_Y\in A^6(Y\times Y)$ and the small diagonal $\Delta_Y^{sm}\in A^{12}(Y^3)$. Then the cycle class map
  \[ E^i(Y^r)\ \to\ H^{2i}(Y^r) \]
  is injective for $i\ge 6r-1$.
  
             \end{corollary}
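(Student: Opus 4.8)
The plan is to reduce the injectivity statement on the quotient $Y^r$ to the corresponding statement on $X^r$, and then to apply lemma \ref{inj1} together with corollary \ref{quot3}. The key observation is that the quotient map $p\colon X\to Y$ (and its powers $p^r\colon X^r\to Y^r$) induce, via pullback $(p^r)^\ast$, a split injection $A^\ast(Y^r)\hookrightarrow A^\ast(X^r)^{G^r}$ compatible with the bigrading (this is how the MCK decomposition $\pi_j^Y$ was defined in lemma \ref{quotientmck}, as a $G$-invariant pushforward/pullback of $\pi_j^X$). Since the cycle class map is compatible with pullback, it suffices to prove the analogous injectivity for the image of $E^\ast(Y^r)$ inside $A^\ast(X^r)$.

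First I would identify where the generators of $E^\ast(Y^r)$ land after pullback. The crucial point is that $(p^r)^\ast$ sends each generating class into a controlled bigraded piece. By corollary \ref{quot3}(\rom2), for $i\le 3$ one has $A^i(Y)=\bigoplus_{j\le 0}A^i_{(j)}(Y)$, so the pullbacks of $A^1(Y),A^2(Y),A^3(Y)$ land in $\bigoplus_{j\le 0}A^\ast_{(j)}(X)$. By lemma \ref{diag}, the diagonal $\Delta_X$ lies in $A^n_{(0)}(X\times X)$ and the small diagonal $\Delta_X^{sm}$ lies in $A^{2n}_{(0)}(X\times X\times X)$; since $\Delta_Y$ and $\Delta_Y^{sm}$ pull back to ($G$-averaged multiples of) these, they too live in the weight-zero part. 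Because the bigrading is multiplicative (products send $A^i_{(j)}\otimes A^{i'}_{(j')}$ to $A^{i+i'}_{(j+j')}$), the entire subalgebra $E^\ast(Y^r)$ pulls back into $\bigoplus_{j\le 0}A^\ast_{(j)}(X^r)$.

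Next I would show that in the relevant range $i\ge 6r-1$ only the weight-zero piece $A^i_{(0)}(X^r)$ can occur, so that the pullback of $E^i(Y^r)$ actually lands in $A^i_{(0)}(X^r)$. This uses the vanishing $A^i_{(j)}(S^{[r]})=0$ for $i\ge 2r-1$ and $j<0$ quoted in the proof of corollary \ref{quot3}, extended to $X^r=(S^{[3]})^r$ via the product MCK decomposition and the de Cataldo--Migliorini compatibility of remark \ref{compat}. With $n=6$ and the given bound $i\ge 6r-1=2(3r)-1$, the surface $S^{3r}$-computation of lemma \ref{inj1} applies on the nose, and the negative-weight pieces vanish; hence only $A^i_{(0)}(X^r)$ survives.

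Finally, having located the image of $E^i(Y^r)$ inside $A^i_{(0)}(X^r)$ for $i\ge 6r-1$, I would invoke lemma \ref{inj1} (applied to $X^r$, viewed through the Hilbert scheme $S^{[3r]}$ or, more directly, through the split injection $A^i_{(0)}(X^r)\hookrightarrow\bigoplus A^\ast_{(0)}(S^{3r})$ of remark \ref{compat}) to conclude that the cycle class map is injective on $A^i_{(0)}(X^r)$ in this range. Compatibility of the cycle class map with the injection $A^\ast(Y^r)\hookrightarrow A^\ast(X^r)$ then yields injectivity of $E^i(Y^r)\to H^{2i}(Y^r)$. The main obstacle I anticipate is the bookkeeping in the second step: one must verify carefully that the degree bound $i\ge 6r-1$ on $Y^r$ translates into exactly the range where lemma \ref{inj1} gives injectivity for the relevant self-products, and that the multiplicativity of the bigrading together with the weight-$\le 0$ placement of the generators genuinely forces the product classes of total codimension $\ge 6r-1$ into weight zero rather than some negative weight that is nonzero a priori; this is where the interplay between the dimension constraint and the vanishing $A^i_{(j)}=0$ for $j<0$ must be applied with care.
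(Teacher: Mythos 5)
Your proposal is correct and follows essentially the same route as the paper: place the generators of $E^\ast(Y^r)$ in $\bigoplus_{j\le 0}A^\ast_{(j)}$ using corollary \ref{quot3}(\rom2) and lemma \ref{diag}, use multiplicativity of the bigrading and the vanishing of negative-weight pieces in codimension $\ge 6r-1$ to land in $A^i_{(0)}$, and conclude by lemma \ref{inj1}. The only cosmetic difference is that you pull back to $X^r$ at the outset, whereas the paper works with the bigrading on $Y^r$ directly and only passes to $X^r$ (and thence to powers of $S$) at the final injectivity step; both versions require the same de Cataldo--Migliorini bookkeeping, which you correctly flag and which does check out.
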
      
             
    \begin{proof} As we have seen (corollary \ref{quot3}(\rom1)), $Y$ has a self--dual MCK decomposition. Since the property of having a self--dual MCK decomposition is stable under products, $Y^r$ has a self--dual MCK decomposition, and so there is a bigraded ring structure $A^\ast_{(\ast)}(Y^r)$. We know (lemma \ref{diag}) that the diagonals $\Delta_Y$ and $\Delta_Y^{sm}$ are ``of pure grade $0$'', i.e.
    \[  \begin{split} \Delta_Y&\in\ \ A^6_{(0)}(Y\times Y)\ ,\\
                             \Delta_Y^{sm}&\in\ \ A^{12}_{(0)}(Y\times Y\times Y)\ .\\
                             \end{split}              \]
  We have also seen (corollary \ref{quot3}(\rom2)) that 
  \[ A^i(Y) = \bigoplus_{j\le 0} A^i_{(j)}(Y)\ \ \ \hbox{for}\ i\le 3\ .\]
  
    Consider now the projections $p_k\colon Y^r\to Y$ (on the $k$--th factor), and $p_{kl}\colon Y^r\to Y^2$ (on the $k$--th and $l$--th factor), and $p_{klm}\colon Y^r\to Y^3$ (on the $k$--th and $l$--th and $m$--th factor).      
    The projections $p_k, p_{kl}, p_{klm}$ 
    respect the bigrading of the Chow ring. (This follows from \cite[Corollary 1.6]{SV2}, or can be readily checked directly.) 
            
   It follows there is an inclusion
     \[ E^\ast(Y^r)\ \subset\  \bigoplus_{j\le 0} A^\ast_{(j)}(Y^r)\ ,\]
     and so in particular
     \[ E^i(Y^r)\ \subset\   A^i_{(0)}(Y^r)\ \ \ \hbox{for\ } i\ge 6r-1\ .\]
  As we have seen (lemma \ref{inj1}), the conjectural equality
    \begin{equation}\label{conjeq}  A^i_{(0)}(Y^r)\cap  A^i_{hom}(Y^r)\stackrel{??}{=}0 \end{equation}
    can be proven for $i\ge 6r-1$. 
    This proves the corollary.
       \end{proof}

                                  
  
                                  

The phenomenon displayed in corollary \ref{prod3} becomes even more pronounced when restricting to the Chow ring of $Y$ (i.e., taking $r=1$):

\begin{corollary}\label{prod1} Let $X$ and $G$ be as in theorem \ref{main3}, and let $Y:=X/G$. Let $a\in A^i(Y)$ be a cycle with $i\not=3$. Assume $a$ is a sum of  intersections of $2$ cycles of strictly positive codimension, i.e.
  \[ a\in \ima\Bigl(  A^m(Y)\otimes A^{i-m}(Y)\ \to\ A^i(Y)\Bigr)\ ,\ \ \ 0<m<i\ .\]
  Then $a$ is rationally trivial if and only if $a$ is homologically trivial.
\end{corollary}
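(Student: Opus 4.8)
The plan is to run the whole argument through the bigraded ring structure on $A^\ast(Y)$ furnished by the self--dual MCK decomposition of corollary \ref{quot3}(\rom1), using theorem \ref{main3} (equivalently, corollary \ref{quot3}(\rom2)) to control which graded pieces can occur. The implication ``rationally trivial $\Rightarrow$ homologically trivial'' is automatic, so I only treat a homologically trivial product $a$ and must deduce $a=0$. Since $\dim Y=6$ and $a$ is an intersection of two cycles of strictly positive codimension, only $i\in\{2,4,5,6\}$ need to be considered. The goal of the first half of the argument is to show that in each of these cases $a$ lands in the top graded piece $A^i_{(0)}(Y)$; the second half is then a question of injectivity of the cycle class map on $A^i_{(0)}(Y)$.

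For the grade bookkeeping, I would first record the available grades on $Y$. A direct computation via remark \ref{compat} shows that all grades of $A^\ast((S_3)^{[3]})$ are non--negative and even, and theorem \ref{main3} kills the ``deep'' pieces, leaving: $A^1(Y),A^2(Y),A^3(Y)$ pure of grade $0$ (using cases $(2,2),(3,2)$); $A^4(Y)=A^4_{(0)}(Y)\oplus A^4_{(2)}(Y)$ (case $(4,4)$); $A^5(Y)=A^5_{(0)}(Y)\oplus A^5_{(4)}(Y)$ (case $(5,2)$); and $A^6(Y)=A^6_{(0)}(Y)\oplus A^6_{(6)}(Y)$ (cases $(6,2),(6,4)$). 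Because the intersection product sends $A^i_{(j)}(Y)\otimes A^{i'}_{(j')}(Y)$ to $A^{i+i'}_{(j+j')}(Y)$, the grade of a two--fold product is the sum of the grades of its factors. For $i=2$ and $i=4$ every factor lies in $A^{\le 3}(Y)$ and so has grade $0$, giving $a\in A^i_{(0)}(Y)$ at once. For $i=5$ at most one factor lies in $A^4(Y)$, so the product has grade in $\{0,2\}$; since $A^5_{(2)}(Y)=0$ the grade--$2$ part vanishes and $a\in A^5_{(0)}(Y)$. For $i=6$ the factors have grade summing to at most $4$ (the top splitting being $A^1\cdot A^5$ or $A^5\cdot A^1$), so the product has grade in $\{0,2,4\}$; as $A^6_{(2)}(Y)=A^6_{(4)}(Y)=0$ and grade $6$ is unreachable, we again get $a\in A^6_{(0)}(Y)$. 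Thus in all cases $a\in A^i_{(0)}(Y)$, and it is precisely theorem \ref{main3} that removes the intermediate grades a product could otherwise populate.

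It remains to prove that the cycle class map is injective on $A^i_{(0)}(Y)$. For $i=5,6$ this is immediate: pulling back along the degree $3$ quotient $p\colon X\to Y$ (which is injective on Chow groups and respects the bigrading) sends $a$ to an element of $A^i_{(0)}(X)$, and lemma \ref{inj1} gives injectivity of $A^i_{(0)}(X)\to H^{2i}(X)$ for $i\ge 2\cdot 3-1=5$; hence $p^\ast a=0$ and so $a=0$. The main obstacle is the remaining range $i=2,4$, where lemma \ref{inj1} does not apply. Here I would again pass to $X$ and decompose via the de Cataldo--Migliorini isomorphism (remark \ref{compat}): the injectivity $A^i_{(0)}(X)\cap A^i_{hom}(X)=0$ splits over partitions $\mu\vdash 3$ into injectivity of $A^{i+l(\mu)-3}_{(0)}(S_3^{\mu})\to H^{\ast}(S_3^{\mu})$. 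All summands with $l(\mu)\le 2$ are harmless — they are point classes, classes in $NS$, or fall within the range of lemma \ref{inj1} (for $i=4$, $\mu=(2,1)$ gives $A^3_{(0)}(S_3^2)\to H^6$ with $3=2\cdot 2-1$) — so the genuinely new inputs are the top--partition pieces $A^2_{(0)}(S_3^3)\to H^4(S_3^3)$ and $A^4_{(0)}(S_3^3)\to H^8(S_3^3)$.

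These two pieces lie below the bound $2r-1=5$ of lemma \ref{inj1}, and establishing their injectivity is the crux of the proof. I expect to need exactly the sharpened injectivity for Hilbert schemes of \emph{special} $K3$ surfaces foreshadowed in the remark following lemma \ref{inj1} (lemma \ref{inj2}), proved by exploiting the $\rho$--maximality of $S_3$: since $\rho(S_3)=20$ the transcendental lattice $T_{S_3}$ has rank $2$ and the Hodge conjecture holds for $S_3^r$ (as already used in lemma \ref{pi4}), which should force the grade--$0$ part of $A^\ast(S_3^3)$ in these low degrees to be generated by $NS(S_3)$ and the canonical $0$--cycle $\oo_{S_3}$, all of which inject into cohomology. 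Making this precise — i.e.\ proving $A^2_{(0)}(S_3^3)\cap A^2_{hom}(S_3^3)=0$ and its codimension--$4$ analogue — is the one step that genuinely uses the $\rho$--maximality of $S_3$ rather than just finite--dimensionality, and it is where I anticipate the real work to lie.
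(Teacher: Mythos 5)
Your reduction of $a$ to the graded piece $A^i_{(0)}(Y)$ via the bigrading and theorem \ref{main3} is essentially the paper's argument for $i=4,5,6$ (and is a reasonable variant for $i=2$), and your treatment of $i=5,6$ via lemma \ref{inj1} is exactly what the paper does. The gap is in the second half, for $i=2$ and $i=4$, which you yourself flag as ``where the real work lies'' but then misidentify. You propose to prove that the cycle class map is injective on $A^2_{(0)}(S_3^3)$ and $A^4_{(0)}(S_3^3)$ \emph{without} reference to the group $G$, citing proposition \ref{inj2} as the sharpened injectivity you expect to need. But proposition \ref{inj2} asserts $A^4_{(0)}(Y)\cap A^4_{hom}(Y)=0$, i.e.\ a statement about the $G$--\emph{invariant} part $\bigl(A^4_{(0)}(X)\bigr)^G$, and its proof is not a consequence of $\rho$--maximality alone: it rests on lemma \ref{pi4}, which shows that $\Delta^G_{S^2}\circ\pi_4^{S^2}$ is homologically supported on (codim $2$)$\times$(codim $2$), and the averaging operator $\Delta^G$ is indispensable there --- it is precisely the non--symplectic action that kills $H^{4,0}(S^2)$ and $T\otimes N$, after which $\rho$--maximality identifies what remains with algebraic classes. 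The projector $\pi_4^{S^2}$ by itself acts as the identity on $H^{4,0}(S^2)\neq 0$ and is therefore \emph{not} supported on a codimension--$2$ product, so the nilpotence/support argument you are implicitly relying on does not get off the ground for the full (non--averaged) piece $A^4_{(0)}(S_3^3)$. Since $a$ comes from $Y$, its image in $A^4(S_3^3)$ is $\Delta^G_{S^3}$--invariant, so the invariant statement suffices --- but your write--up never uses this, and without it the claimed injectivity is an open (Murre ``conjecture D''--type) statement.

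The case $i=2$ is handled in the paper by an entirely different tool: it is deduced from Voisin's theorem on intersections of divisors on Hilbert schemes of $K3$ surfaces (\cite[Theorem 1.4]{V12}), applied directly to $X=(S_3)^{[3]}$. The paper proves no injectivity statement for $A^2_{(0)}$ of $Y$, of $X$, or of $S_3^3$ anywhere, and proposition \ref{inj2} says nothing about codimension $2$. Your suggested route --- that $\rho$--maximality and the Hodge conjecture for $S_3^r$ should force $A^2_{(0)}(S_3^3)$ to be generated by $NS(S_3)$ and $\oo_{S_3}$ --- is not obviously false (one could imagine an argument through the injectivity of $\operatorname{End}(t_2(S_3))\to\operatorname{End}(T_{S_3})$ for finite--dimensional motives), but it is a genuinely different and harder statement than anything established in the paper, and you do not supply the argument. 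So the proof is incomplete precisely at the two injectivity statements that carry the content of the corollary in codimensions $2$ and $4$.
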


\begin{proof} Suppose $i=5$ or $i=6$. Since $A^r_{(r)}(Y)=0$ for $0<r<6$ (theorem \ref{main3}), we have
  \[ \begin{split}  \ima\Bigl(  A^m(Y)\otimes A^{i-m}(Y)\ \to\ A^i(Y)\Bigr) &=\ima \Bigl(  (\bigoplus_{j<m} A^m_{(j)}(Y))\otimes (\bigoplus_{j^\prime<i-m}A^{i-m}(Y))\ \to\ A^i(Y)\Bigr) \\
                                      &\subset \bigoplus_{j+j^\prime< i-1 } A^i_{(j+j^\prime)}(Y) =A^i_{(0)}(Y)\ .\\
                                    \end{split}\]
       The conclusion now follows from lemma \ref{inj1}.
       
   For $i=2$, the corollary follows from a far more general result of Voisin concerning intersections of divisors on Hilbert schemes of $K3$ surfaces \cite[Theorem 1.4]{V12}.
     
   It only remains to treat $i=4$. As both $m$ and $4-m$ are at most $3$, we have 
    \[   A^m(Y)=\bigoplus_{j\le 0} A^m_{(j)}(Y)\ ,\ \ \ A^{4-m}(Y)=\bigoplus_{j\le 0} A^{4-m}_{(j)}(Y)  \] 
    (theorem \ref{main3}). It follows that
     \[  \ima\Bigl(  A^m(Y)\otimes A^{4-m}(Y)\ \to\ A^4(Y)\Bigr)\ \subset\ \bigoplus_{j\le 0}  A^4_{(j)}(Y)\ ;\]
     the conclusion now follows from proposition \ref{inj2}.
                               \end{proof}

 \begin{proposition}\label{inj2} Let $X=(S_3)^{[3]}$  and $G\subset\aut(X)$ be as in theorem \ref{main3}. Let $Y:=X/G$. Then 
   \[ \begin{split}  &A^4_{(j)}(Y)=0\ \ \ \hbox{for\ }j<0\ ;\\
      &A^4_{(0)}(Y) \cap A^4_{hom}(Y)=0\ .\\
      \end{split}\]
    \end{proposition}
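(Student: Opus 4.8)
The plan is to reduce both assertions to statements about the powers $S$, $S^2$, $S^3$ of $S=S_3$, by two reductions. For the quotient morphism $p\colon X\to Y$, the pullback $p^\ast\colon A^\ast(Y)\to A^\ast(X)$ is injective with image $A^\ast(X)^G$ and respects the bigrading (this is how the $\pi^Y_j$ are built in lemma \ref{quotientmck}); also a class on $Y$ is homologically trivial precisely when its pullback is. Thus $A^4_{(j)}(Y)\cong A^4_{(j)}(X)\cap A^4(X)^G$, and it is enough to prove $A^4_{(j)}(X)=0$ for $j<0$ together with $A^4_{(0)}(X)\cap A^4(X)^G\cap A^4_{hom}(X)=0$. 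I would then feed these through the $i=4$ instance of the split injection of remark \ref{compat} (i.e. diagram (\ref{comdiag3})),
\[ A^4_{(j)}(X)\ \hookrightarrow\ A^4_{(j)}(S^3)\oplus A^3_{(j)}(S^2)\oplus A^2_{(j)}(S)\ ,\]
which is $G$--equivariant, with the action on the $S^m$--summand given by $(\Delta^G_{S^m})_\ast$, and compatible with the cycle class map.

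The first assertion is then immediate: a product $S^m$ of a $K3$ surface carries only non--negative pieces, $A^\bullet_{(j)}(S^m)=0$ for $j<0$, because each tensor factor $A^a_{(j')}(S)$ lives in $j'\in\{0,2\}$ and the grading is additive under external products. Every summand on the right vanishes for $j<0$, hence so does $A^4_{(j)}(X)$, and therefore $A^4_{(j)}(Y)=0$ for $j<0$.

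For the second assertion I would treat the three summands separately, homological triviality being inherited from the diagram. The term $A^2_{(0)}(S)=\QQ[\oo_S]$ meets $A^2_{hom}(S)$ only in $0$, since $\oo_S$ has degree $1$. The term $A^3_{(0)}(S^2)$ injects into $H^6(S^2)$ by the argument of lemma \ref{inj1} (this is the case $i=2r-1$ for $r=2$), so it meets $A^3_{hom}(S^2)$ in $0$ as well; neither case uses $G$. The term $A^4_{(0)}(S^3)$ is the crux, and is where $\rho$--maximality enters. Set $\Gamma:=\Delta^G_{S^3}\circ\pi^{S^3}_8$, an idempotent by lemma \ref{idemp}; a class $\beta\in A^4_{(0)}(S^3)\cap A^4(S^3)^G$ is exactly one with $\Gamma_\ast\beta=\beta$. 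Transposing lemma \ref{pi4} and using lemma \ref{comm}, I obtain a homological equivalence $\Gamma\equiv\delta$ in $H^{12}(S^3\times S^3)$, where $\delta$ is an algebraic cycle whose target support $V\subset S^3$ has dimension $2$.

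Here I would choose the algebraic representative $\delta$ so that the finitely many irreducible components of $V$ are homologically independent in $S^3$; this is possible because, by $\rho$--maximality, the $G$--invariant transcendental Hodge classes on $S^2$ form a $2$--dimensional space of $(2,2)$--classes that is algebraic (Hodge conjecture for the Kummer surface $S$, exactly as in lemma \ref{pi4}), so $\delta$ can be realized on a union of independent surfaces. Then $A_2(V)\to H^8(S^3)$ is injective, so a cycle supported on $V$ and homologically trivial on $S^3$ must vanish; as $\delta_\ast\beta$ is supported on $V$ and $[\delta_\ast\beta]=\delta_\ast[\beta]=0$ for $\beta\in A^4_{hom}$, this gives $\delta_\ast\beta=0$. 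Hence $\beta=\Gamma_\ast\beta=(\Gamma-\delta)_\ast\beta$, and since $\Gamma-\delta$ is homologically trivial, finite--dimensionality of $S^3$ yields $(\Gamma-\delta)^{\circ N}=0$ (theorem \ref{nilp}); iterating $\beta=(\Gamma-\delta)_\ast\beta$ gives $\beta=(\Gamma-\delta)^{\circ N}_\ast\beta=0$, completing the third case and the proposition. The main obstacle is exactly this $S^3$ computation: because the grade--$0$ piece forces the projector $\pi^{S^3}_8$, the source support of $\delta$ has dimension $4$ and cannot annihilate $2$--cycles on dimension grounds alone (unlike the $A^4_{(4)}$ computation in theorem \ref{main3}), so one must genuinely use homological triviality; the delicate step is to arrange $V$ with homologically independent components, and it is there that both $\rho$--maximality and finite--dimensionality are indispensable.
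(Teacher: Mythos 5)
Your overall strategy coincides with the paper's: reduce to $\bigl(A^4_{(j)}(X)\bigr)^G$ via the split injection $A^4_{(j)}(Y)\hookrightarrow A^4_{(j)}(X)$, pass to the summands $S^3$, $S^2$, $S$ by remark \ref{compat}, dispose of the $S^2$ and $S$ summands by lemma \ref{inj1}, and attack the $S^3$ summand with the transpose of lemma \ref{pi4} plus the nilpotence theorem. However, the step you yourself single out as delicate --- arranging that the irreducible components of the target support $V$ be homologically independent, so that $A_2(V)\to H^8(S^3)$ is injective --- is both unjustified and unnecessary. Unjustified, because the support $V=V_{4,3}$ is handed to you by lemma \ref{pi4}: the Hodge conjecture for $S^2$ produces \emph{some} algebraic representatives $\sum_i n_i [Z_i]$ of the relevant $(2,2)$--classes, and nothing allows you to re-choose the $Z_i$ so that their classes are independent in cohomology; $\rho$--maximality gives no control over this. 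Unnecessary, because a correspondence $Q$ supported on $W\times V$ with $\dim V=2$ acts on $A^4(S^3)=A_2(S^3)$ through a factorization $A_2(S^3)\to A^{0}(\wt{V})\to A_2(S^3)$ for a resolution $\wt{V}\to V$ (this is the standard Bloch--Srinivas-type argument), and since $A^0(\wt{V})\cong H^0(\wt{V})$ has no homologically trivial part, $Q_\ast$ annihilates $A^4_{hom}(S^3)$ outright. This is precisely how the paper concludes (``the action of $Q_j$ on $A^4_{hom}(S^3)$ factors over $A^0_{hom}(\wt{V})=0$''). With this replacement your argument for the second assertion closes; the rest of it (idempotence of $\Delta^G_{S^3}\circ\pi_8^{S^3}$, iteration of $\beta=(\Gamma-\delta)_\ast\beta$ against the nilpotent power) is sound.

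A secondary point: your justification of $A^4_{(j)}(S^3)=0$ for $j<0$ (``the grading is additive under external products'') is not a proof, since $A^4(S^3)$ is not spanned by external products of cycles on the factors. The statement is nevertheless true --- the projectors $\pi^{S^3}_{10}$ and $\pi^{S^3}_{12}$ have target support of dimension $\le 2$, so their image on $A_2(S^3)$ lies in the span of classes such as $[S\times x\times x]$, which belong to $A^4_{(0)}(S^3)$ and are therefore killed upon applying the projector a second time --- but the paper sidesteps the issue entirely, proving only the weaker fact that $\Delta^G_{S^3}$ annihilates $A^4_{(-2)}(S^3)$, again by a support-plus-nilpotence argument.
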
 
    
    \begin{proof} First, observe that $A^4_{(j)}(Y)\to A^4_{(j)}(X)$ is split injective for any $j$ (this follows from the construction of the MCK decomposition for $Y$, lemma \ref{quotientmck}). Consequently, it suffices to prove that we have
    \[  \begin{split} &\bigl(A^4_{(j)}(X)\bigr)^G=0\ \ \ \hbox{for\ }j<0\ ;\\
         &\bigl(A^4_{(0)}(X)\bigr)^G\cap A^4_{hom}(X)=0\ .\\
         \end{split}\]    
   Let us first do the first statement. Using remark \ref{compat} plus the fact that
     \[ A^3_{(j)}(S^2)=A^2_{(j)}(S)=0\ \ \ \hbox{for\ }j<0\ ,\]
     we obtain for $j<0$ a commutative diagram
      \[ \begin{array}[c]{ccc}
                     A^4_{(j)}(X)& \hookrightarrow & A^4_{(j)}(S^3)\\
          &&\\
        \ \ \ \  \downarrow {\scriptstyle (\Delta_X^G)_\ast} &&\ \ \ \downarrow {\scriptstyle (\Delta^G_{S^3})_\ast} \\
         &&\\
         A^4_{(j)}(X)& \hookrightarrow & A^4_{(j)}(S^3)\ ,\\
         \end{array}    \]
    where horizontal arrows are split injections. We are thus reduced to proving that
    \[  (\Delta^G_{S^3})_\ast  A^4_{(j)}(S^3) = 0\ \ \ \hbox{for\ }j<0\ .\]
    Clearly $A^4_{(-4)}(S^3)=(\pi_{12}^{S^3})_\ast A^4(S^3)=0$. It is left to consider $j=-2$, i.e. we need to
     prove that
    \begin{equation}\label{injok0} (\Delta^G_{S^3}\circ \pi_{10}^{S^3})_\ast A^4(S^3) =0\ .\end{equation}    
     But we have seen that 
    \[ \Delta^G_{S^3}\circ \pi_{10}^{S^3} = {}^t (    \Delta^G_{S^3}\circ \pi_2^{S^3}  )\ \ \ \hbox{in}\ A^6(X\times X) \]
    (lemma \ref{idemp}), and so it follows from lemma \ref{pi2} that
    \[ \Delta^G_{S^3}\circ \pi_{10}^{S^3}-\gamma\ \in\ \ \ A^6_{hom}(X\times X)\ ,\]    
    where $\gamma$ is some cycle supported on $D\times C$, and $D$ is a divisor and $C\subset X$ is a curve.
    Applying the nilpotence theorem (plus the idempotence of lemma \ref{idemp}), we find
    \[ \Delta^G_{S^3}\circ \pi_{10}^{S^3}=Q_1+\cdots +Q_N\ \ \ \hbox{in}\  A^6_{}(X\times X)\ ,\]  
    where the $Q_j$ are supported on $D\times C$. For dimension reasons, the $Q_j$ act trivially on $A^4_{}(S^3)$ (indeed, the action of $Q_j$ on $A^4_{}(S^3)$ factors over $A^{-1}_{}(\wt{C})=0$).
    It follows that (\ref{injok0}) is true, proving the first statement of the proposition.    
    
   Next, let us prove the second part of the proposition. Since
    \[ A^3_{(0)}(S^2)\cap A^3_{hom}(S^2)=A^2_{(0)}(S)\cap A^2_{hom}(S)=0 \]
    (lemma \ref{inj1}), we obtain a commutative diagram
    \[ \begin{array}[c]{ccc}
                     A^4_{(0)}(X)\cap A^4_{hom}(X) & \hookrightarrow & A^4_{(0)}(S^3)\cap A^4_{hom}(S^3)\\
          &&\\
        \ \ \ \  \downarrow {\scriptstyle (\Delta_X^G)_\ast} &&\ \ \ \downarrow {\scriptstyle (\Delta^G_{S^3})_\ast} \\
         &&\\
         A^4_{(0)}(X)\cap A^4_{hom}(X) & \hookrightarrow & A^4_{(0)}(S^3)\cap \ \ A^4_{hom}(S^3)\ ,\\
         \end{array}    \]
    where horizontal arrows are split injections. We are thus reduced to proving that
    \[  (\Delta^G_{S^3})_\ast \Bigl( A^4_{(0)}(S^3)\cap A^4_{hom}(S^3)\Bigr) = 0\ ,\]
    which is equivalent to proving that
    \begin{equation}\label{injok} (\Delta^G_{S^3}\circ \pi_8^{S^3})_\ast A^4_{hom}(S^3) =0\ .\end{equation}
    But we have seen that 
    \[ \Delta^G_{S^3}\circ \pi_8^{S^3} = {}^t (    \Delta^G_{S^3}\circ \pi_4^{S^3}  )\ \ \ \hbox{in}\ A^6(X\times X) \]
    (lemma \ref{idemp}), and so it follows from lemma \ref{pi4} that
    \[ \Delta^G_{S^3}\circ \pi_8^{S^3}-\gamma\ \in\ \ \ A^6_{hom}(X\times X)\ ,\]    
    where $\gamma$ is some cycle supported on $W\times V\subset X\times X$, and $W\subset X$ is codimension $2$ and $V\subset X$ is codimension $4$. Applying the nilpotence theorem (plus the idempotence of lemma \ref{idemp}), we find
    \[ \Delta^G_{S^3}\circ \pi_8^{S^3}=Q_1+\cdots +Q_N\ \ \ \hbox{in}\  A^6_{}(X\times X)\ ,\]
    where the $Q_j$ are supported on $W\times V$. For dimension reasons, the $Q_j$ act trivially on $A^4_{hom}(S^3)$ (indeed, the action of $Q_j$ on $A^4_{hom}(S^3)$ factors over $A^0_{hom}(\wt{V})=0$).
    It follows that (\ref{injok}) is true, proving the second statement of the proposition.
    
          \end{proof}

\begin{remark}\label{compare} Corollaries \ref{prod3} and \ref{prod1} are similar to the Beauville--Voisin conjecture, on the one hand, and to results of Voisin and L. Fu for Calabi--Yau varieties, on the other hand. 

The Beauville--Voisin conjecture \cite[Conjecture 1.3]{V12} concerns the Chow ring of a hyperk\"ahler variety $X$. The conjecture is that the subring
  \[ D^\ast(X)\ \subset\ A^\ast(X) \]
  generated by divisors and Chern classes injects (via the cycle class map) into cohomology. Partial results towards this conjecture have been obtained in \cite{V12}, 
  \cite{Rie2}, \cite{Yin}.
  
  On the other hand, if $Y$ is a Calabi--Yau variety that is a generic complete intersection, say of dimension $n$, it has been proven that the image of the intersection product
  \[ \ima\Bigl(  A^i(Y)\otimes A^{n-i}(Y)\ \to\ A^n(Y) \Bigr)\ ,  \ \ 0<i<n\ ,\]
  is of dimension $1$ and hence injects into cohomology \cite{V13}, \cite{LFu}.
  
  Results like corollaries \ref{prod3} and \ref{prod1} are presumably {\em not\/} true for all Calabi--Yau varieties (since not all Calabi--Yau varieties verify Beauville's weak splitting property \cite{Beau3}); for a general Calabi--Yau variety, one only expects statements about $0$--cycles. Conjecturally, statements concerning other codimensions (such as corollaries \ref{prod3} and \ref{prod1}) should be true for Calabi--Yau varieties that are finite quotients of hyperk\"ahler varieties.
\end{remark}

\section{A partial generalization}

This section contains a partial generalization of theorem \ref{main3}. We consider Hilbert schemes $X=(S_k)^{[k]}$, where $S_k$ is any of the LSY surfaces. The same result (theorem \ref{main}) also applies to some of the Sch\"utt surfaces.

\begin{theorem}\label{main} Let $S_k$ be an LSY surface, or a Sch\"utt surface as in proposition \ref{schuttfdim}, with $G_{S_k}\subset
\aut(S_k)$ the order $k$ group of non--symplectic automorphisms
of definition \ref{lsy}, resp. definition \ref{schu}. 
 Let $X$ be the Hilbert scheme $X:=(S_k)^{[k]}$ of dimension $n=2k$. Let $G\subset \aut(X)$ be the order $k$ group of non--symplectic natural automorphisms, corresponding to $G_{S_k}\subset
\aut(S_k)$. Then
\[      A^i_{(2)}(X)\cap A^i(X)^G=  0\ \ \ \hbox{for}\ i\in\{2,n\}\ .  \]
\end{theorem}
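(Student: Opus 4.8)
The plan is to prove the equivalent statement that $(\Delta^G_X)_\ast=0$ on the whole of $A^i_{(2)}(X)$ for $i\in\{2,n\}$: since $\Delta^G_X$ is an idempotent commuting with the projectors $\pi_i^X$ (lemma \ref{idempx}) and cutting out the $G$--invariant part, this is the same as the asserted vanishing of $A^i_{(2)}(X)\cap A^i(X)^G$. Following the strategy of theorem \ref{main3}, the first move is to descend from $X$ to the symmetric products via the split injections of remark \ref{compat}, which sit in a commutative square
\[ \begin{array}[c]{ccc}
A^i_{(2)}(X) & \hookrightarrow & \bigoplus_{\mu\in\Bb(k)} A^{i+l(\mu)-k}_{(2)}(S^{l(\mu)})\\
\ \ \downarrow{\scriptstyle(\Delta^G_X)_\ast} && \ \ \downarrow{\scriptstyle(\Delta^G_{S^{l(\mu)}})_\ast}\\
A^i_{(2)}(X) & \hookrightarrow & \bigoplus_{\mu\in\Bb(k)} A^{i+l(\mu)-k}_{(2)}(S^{l(\mu)})\ ,\\
\end{array}\]
the commutativity coming from the construction of natural automorphisms (as in the proof of lemma \ref{idempx}). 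It then suffices to kill each summand. The decisive simplification is that only the partition $\mu=(1^k)$ survives: on the $K3$ surface the sole nonzero grade--$2$ piece is $A^2_{(2)}(S)=A^2_{AJ}(S)$, sitting in codimension $2$, so by multiplicativity a grade--$2$ class on $S^{l(\mu)}$ has codimension between $2$ and $2\,l(\mu)$. The lower bound forces $l(\mu)=k$ when $i=2$, while for $i=n$ the requirement that the codimension $i+l(\mu)-k$ not exceed $\dim S^{l(\mu)}=2\,l(\mu)$ again forces $l(\mu)=k$; either way everything reduces to proving $(\Delta^G_{S^k})_\ast=0$ on $A^i_{(2)}(S^k)$ for $i\in\{2,2k\}$.

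At the level of $S^k$ I would argue purely through the product structure of the MCK decomposition, avoiding any further use of nilpotence. Because the only grade--$2$ factor available on $S$ is $A^2_{AJ}(S)$, the group $A^i_{(2)}(S^k)$ is the direct sum, over the choice $l\in\{1,\dots,k\}$ of the slot carrying the transcendental factor, of the pieces $a_1\times\cdots\times A^2_{AJ}(S)\times\cdots\times a_k$, where for $i=2$ the remaining entries lie in $A^0(S)=\QQ$ and for $i=2k$ they lie in $A^2_{(0)}(S)=\QQ[\oo_S]$. Since $h^\ast$ fixes the fundamental class and $h^\ast\oo_S=\oo_S$ for every $h\in G_S$ (proof of lemma \ref{comm}), the idempotent $\Delta^G_{S^k}=\tfrac1k\sum_{h\in G_S}\Gamma_h\times\cdots\times\Gamma_h$ acts slotwise and collapses, on each summand, to $(\Delta^G_S)_\ast$ applied to the single transcendental entry in $A^2_{AJ}(S)$.

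It remains to establish the surface--level vanishing $(\Delta^G_S)_\ast=0$ on $A^2_{AJ}(S)$, which is the genuine content and the main obstacle --- it is a Bloch--conjecture--type statement for the non--symplectic action of $G_S$. Here I would reproduce the mechanism of theorem \ref{main3}: lemma \ref{pi2k} in the case $r=1$ gives $\Delta^G_S\circ\pi_2^S=\pi_2^{S,alg}$ in $H^4(S\times S)$, with $\pi_2^{S,alg}$ supported on $D\times D$ for a divisor $D\subset S$ (theorem \ref{kmp}). As $S$ has finite--dimensional motive, the nilpotence theorem (theorem \ref{nilp}) applied to the homologically trivial correspondence $\Delta^G_S\circ\pi_2^S-\pi_2^{S,alg}$, combined with the idempotence of $\Delta^G_S\circ\pi_2^S$ (lemma \ref{idemp}), yields $\Delta^G_S\circ\pi_2^S=\sum_j Q_j$ in $A^2(S\times S)$, where each $Q_j$ contains a factor $\pi_2^{S,alg}$. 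Taking the rightmost such factor and using that a $0$--cycle moves off the proper divisor $D$, each $Q_j$ acts as $0$ on $A^2(S)=A_0(S)$; hence $(\Delta^G_S\circ\pi_2^S)_\ast=0$ on $A^2(S)$, which is exactly $(\Delta^G_S)_\ast=0$ on $A^2_{(2)}(S)=A^2_{AJ}(S)$.

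What confines this argument to grade $2$ and to the two codimensions $i\in\{2,n\}$ --- as opposed to the richer conclusion of theorem \ref{main3} --- is the lack of $\rho$--maximality for a general LSY or Sch\"utt surface. Without $\rho(S)=20$ there is no analogue of lemma \ref{pi4} to control the grade--$4$ (and deeper) pieces, and the codimension/dimension count that isolates the partition $(1^k)$ closes up only for $i=2$ and $i=n$. I expect no difficulty beyond the surface statement itself, which is already supplied by finite--dimensionality.
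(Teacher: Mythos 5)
Your proposal is correct, and its skeleton coincides with the paper's: reduce to powers of $S$ via remark \ref{compat}, use theorem \ref{kmp} to get the support statement for $\Delta^G_S\circ\pi_2^S$ modulo homological equivalence, then combine Kimura nilpotence with the idempotence of lemma \ref{idemp}. The genuine difference is \emph{where} the nilpotence theorem is applied. The paper invokes lemma \ref{pi2k} for $r=k$ and runs the nilpotence argument directly on $S^k\times S^k$ (handling $i=n$ by transposing the resulting identity of correspondences); you instead exploit the product structure of the MCK projectors to show that $A^2_{(2)}(S^k)$ and $A^{2k}_{(2)}(S^k)$ are spanned slotwise by classes of the form $pr_l^\ast\beta$, resp.\ $\oo_S\times\cdots\times\beta\times\cdots\times\oo_S$ with $\beta\in A^2_{AJ}(S)$, so that everything collapses to the single surface statement $(\Delta^G_S)_\ast=0$ on $A^2_{AJ}(S)$, and nilpotence is needed only on $S\times S$. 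This is a legitimate and arguably cleaner isolation of the core input (a Bloch--conjecture statement for the non--symplectic action on $S$); the price is that you must justify the slotwise description of the grade--$2$ pieces, which you do correctly by the factorization of $\pi_0^S$ and $\pi_4^S$ through a point. You also make explicit the codimension count showing that only the partition $(1^k)$ contributes in remark \ref{compat}, a point the paper leaves implicit. One small imprecision: the claim that each $Q_j$ kills $A^2(S)$ because ``a $0$--cycle moves off the proper divisor $D$'' is not the right justification --- $0$--cycles cannot be moved --- the correct reason (and the one the paper uses elsewhere) is that a correspondence supported on $D\times D$ acts on $A^2(S)$ through $A^2(\widetilde{D})=0$ for a resolution $\widetilde{D}$ of the curve $D$. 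With that fix the argument is complete.
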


\begin{proof} Let us write $S$ for the surface $S_k$. Let $i\in\{2,n\}$. Using remark \ref{compat}, one finds a commutative diagram
  \[ \begin{array}[c]{ccc}
       A^i_{(2)}(X) &\to&  A^i_{(2)}(S^k)\\
       &&\\
      \ \ \ \ \  \downarrow {(\Delta^G_X)_\ast} &&  \ \ \ \ \  \downarrow {(\Delta^G_{S^k})_\ast}\\
      &&\\
       A^i_{(2)}(X) &\to&  \ \ A^i_{(2)}(S^k)\ ,\\
       \end{array}\]
   where horizontal arrows are split injections. Here $\Delta^G_{S^k}$ is as before defined as the projector
    \[  \Delta^G_{S^k}:= {1\over k}{\displaystyle\sum_{g\in G_{S}}} \Gamma_g\times\cdots\times\Gamma_g\in A^{2k}(S^k\times S^k)\ .\]
   
   We are thus reduced to proving that
   \begin{equation}\label{need} (\Delta^G_{S^k})_\ast=0\colon\ \ \  A^i_{(2)}(S^k)\ \to\ A^i_{(2)}(S^k)\ \ \ \hbox{for}\ i\in\{2,n\}\ .\end{equation}   
       
   Let us assume $i=2$. Lemma \ref{pi2k} (with $k=r$) implies that
   \[   \Delta^G_{S^k}\circ \pi_2^{S^k}  -\gamma\ \in\ \ \ A^{2k}_{hom}(S^k\times S^k)\ ,\]
   where $\gamma$ is a cycle supported on (curve)$\times$(divisor).
   But $S^k$ has finite--dimensional motive, and so there exists $N\in\NN$ such that
   \[   \Bigl(\Delta^G_{S^k}\circ \pi_2^{S^k}  -\gamma\Bigr)^{\circ N}=0\ \ \ \hbox{in}\  A^{2k}_{}(S^k\times S^k)\ .\]   
   Developing, and using that $\Delta^G_{S^k}\circ \pi_2^{S^k}$ is idempotent (lemma \ref{idemp}), this implies that
   \begin{equation}\label{trans}    \Delta^G_{S^k}\circ \pi_2^{S^k} = Q_1 +\cdots +Q_N\ \ \ \hbox{in}\  A^{2k}_{}(S^k\times S^k)\ ,\end{equation}
   where each $Q_j$ is supported on $C\times D$ and hence does not act on $A^2_{hom}(S^k)=A^2_{AJ}(S^k)$. It follows that
   \[  (\Delta^G_{S^k}\circ \pi_2^{S^k})_\ast =0\colon\ \ \ A^2_{hom}(S^k)\ \to\ A^2(S^k)\ ,\]
   and thus
   \[      (\Delta^G_{S^k})_\ast =0\colon\ \ \ A^2_{(2)}(S^k)\ \to\ A^2(S^k)\ ,\]
   proving (\ref{need}) for $i=2$.
   
   It remains to consider the case $i=n$. Taking the transpose of equality (\ref{trans}) and invoking lemma \ref{idemp}, one obtains an equality
   \[  \Delta^G_{S^k}\circ \pi_{4k-2}^{S^k} =   \pi^{S^k}_{4k-2}\circ \Delta^G_{S^k}=    {}^t Q_1 +\cdots +{}^t Q_N\ \ \ \hbox{in}\  A^{2k}_{}(S^k\times S^k)\ ,\]
   where the ${}^t Q_j$ are supported on $D\times C$. The ${}^t Q_j$ do not act on $A^n(S^k)$ (for dimension reasons), and so
   \[ ( \Delta^G_{S^k}\circ \pi_{4k-2}^{S^k}  )_\ast=0\colon\ \ \ A^n(S^k)\ \to\ A^n(S^k)\ ,\]
   proving (\ref{need}) for $i=n$. 
   \end{proof}
   
 Theorem \ref{main} has implications for the quotient $Y:=X/G$ (the variety $Y$ is a ``Calabi--Yau variety with quotient singularities''):  
   
\begin{corollary}\label{cor1} Let $X$ and $G$ be as in theorem \ref{main}, and let $Y:=X/G$ be the quotient. For any $r\in\NN$, let
  \[ E^\ast(Y^r)\ \subset\ A^\ast(Y^r) \]
  be the subalgebra generated by (pullbacks of) $A^1(Y)$ and $A^2(Y)$ and $\Delta_Y$, $\Delta_Y^{sm}$. Then the cycle class map induces maps
  \[ E^i(Y^r)\ \to\ H^{2i}(Y^r) \]
  that are injective for $i\ge nr-1$.
  \end{corollary}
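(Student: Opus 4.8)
The plan is to follow the proof of corollary \ref{prod3} almost verbatim, feeding in theorem \ref{main} in place of theorem \ref{main3} and carrying along the general dimension $n=2k$. By lemma \ref{quotientmck} the quotient $Y=X/G$ has a self-dual MCK decomposition, and since this property is stable under products, $Y^r$ inherits one; thus $A^\ast(Y^r)$ is bigraded by $A^\ast_{(\ast)}(Y^r)$ and the bigrading is multiplicative. The strategy is to show that every generator of $E^\ast(Y^r)$ is of pure grade $0$, so that $E^i(Y^r)\subset A^i_{(0)}(Y^r)$, and then to invoke an injectivity statement for $A^i_{(0)}$ in the top two codimensions.

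For the generators: the diagonals satisfy $\Delta_Y\in A^n_{(0)}(Y\times Y)$ and $\Delta_Y^{sm}\in A^{2n}_{(0)}(Y^3)$ by lemma \ref{diag}, and $A^1(Y)=A^1_{(0)}(Y)$ because divisors have grade $0$. The one point that uses theorem \ref{main} is the placement of $A^2(Y)$. Using remark \ref{compat} and the de Cataldo--Migliorini isomorphism, I would check that $A^2(X)$ carries only the grades $0$ and $2$: a grade $\ge 4$ contribution on a symmetric product $S^{(\mu)}$ would require at least two transcendental factors $A^2_{(2)}(S)$ and hence codimension $\ge 4$, which is impossible for a codimension-$2$ class. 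Thus $A^2(X)=A^2_{(0)}(X)\oplus A^2_{(2)}(X)$, and since $A^2_{(j)}(Y)\cong(A^2_{(j)}(X))^G$ while theorem \ref{main} gives $A^2_{(2)}(X)^G=0$, we obtain $A^2(Y)=A^2_{(0)}(Y)$. As the projections $p_k\colon Y^r\to Y$, $p_{kl}\colon Y^r\to Y^2$, $p_{klm}\colon Y^r\to Y^3$ respect the bigrading and the product is multiplicative, the subalgebra generated by these grade-$0$ classes is contained in $A^\ast_{(0)}(Y^r)$; in particular $E^i(Y^r)\subset A^i_{(0)}(Y^r)$.

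It then remains to prove that $A^i_{(0)}(Y^r)\to H^{2i}(Y^r)$ is injective for $i\ge nr-1$, which is where the actual content sits. The inclusion $A^i_{(0)}(Y^r)\hookrightarrow A^i_{(0)}(X^r)$ is split, being the $G^r$-invariant part. Iterating the split injection of remark \ref{compat} over the $r$ Hilbert-scheme factors of $X^r=((S_k)^{[k]})^r$, one obtains a split injection of $A^i_{(0)}(X^r)$ into a sum of groups $A^{i'}_{(0)}(S^M)$, where each $M\le kr$ and $i'=i+M-kr$. For $i\ge nr-1=2kr-1$ this forces $i'\ge 2M-1$, which is exactly the range in which the argument of lemma \ref{inj1} shows $A^{i'}_{(0)}(S^M)\to H^{2i'}(S^M)$ to be injective (the relevant top projectors of $S^M$ being supported on $S^M$ times a subvariety of very small dimension). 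Compatibility of these split injections with the cycle class maps then yields the desired injectivity.

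The main obstacle is this last paragraph: one must check that the product de Cataldo--Migliorini decomposition is simultaneously compatible with the grading and with the cycle class maps, and that the numerical inequality $i'\ge 2M-1$ holds uniformly over all tuples of partitions $(\mu_1,\dots,\mu_r)$. Given lemma \ref{inj1} and remark \ref{compat}, this is bookkeeping rather than a new idea; the only genuinely new input beyond corollary \ref{prod3} is the grade-$0$ placement of $A^2(Y)$, which is precisely what theorem \ref{main} provides.
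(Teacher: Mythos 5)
Your proposal is correct and follows essentially the same route as the paper: establish the MCK decomposition on $Y^r$, place the generators of $E^\ast(Y^r)$ in non-positive grade using theorem \ref{main} and lemma \ref{diag}, and reduce the injectivity of $A^i_{(0)}$ in the range $i\ge nr-1$ to lemma \ref{inj1}. If anything you are more explicit than the paper on two points it leaves implicit --- the vanishing of the negative-grade pieces (the paper works with $\bigoplus_{j\le 0}A^\ast_{(j)}$ and only needs this vanishing for $i\ge nr-1$, where it quotes it as known, rather than pinning $A^2(Y)$ to pure grade $0$) and the extension of lemma \ref{inj1} from a single Hilbert scheme to the product $X^r$ via iterated de Cataldo--Migliorini, where your inequality $i'\ge 2M-1$ is exactly the needed bookkeeping.
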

  
  \begin{proof} This is similar to corollary \ref{prod3}. First, it follows from lemma \ref{quotientmck} that $Y$, and hence $Y^r$, has a self--dual MCK decomposition. Consequently, the Chow ring $A^\ast(Y^r)$ is a bigraded ring. Theorem \ref{main} (plus the fact that $A^1_{hom}(Y)=0$) implies that
  \[  A^i(Y)=\bigoplus_{j\le 0} A^i_{(j)}(Y)\ \ \ \hbox{for}\ i\le 2\ .\]
  Lemma \ref{diag} ensures that
  \[ \Delta_Y\in A^n_{(0)}(Y)\ ,\ \ \ \Delta_Y^{sm}\in A^{2n}(Y^3)\ .\]
  Since pullbacks for projections of type $Y^r\to Y^{s}, s<r$, preserve the bigrading (this follows from \cite[Corollary 1.6]{SV2}, or can be readily checked directly), this implies that
  \[ E^\ast(Y^r)\subset \bigoplus_{j\le 0}A^\ast_{(j)}(Y^r)\ .\]
  In particular, this implies
  \[ E^i(Y^r)\subset A^i_{(0)}(Y^r)\ \ \ \hbox{for\ }i\ge nr-1\ .\]
 The corollary now follows from the fact that
  \[  A^i_{(0)}(Y^r)\cap A^i_{hom}(Y^r)\ \to\  A^i_{(0)}(X^r)\cap A^i_{hom}(X^r)  \]
  is injective (this is true for any $i$), and 
  \[ A^i_{(0)}(X^r)\cap A^i_{hom}(X^r) =0\ \ \ \hbox{for}\ i\ge nr-1 \ \]
  (lemma \ref{inj1}).
    \end{proof}
  
 \begin{corollary}\label{cor2} Let $X$ and $G$ be as in theorem \ref{main}, and let $Y:=X/G$ be the quotient. Let $a\in A^n(Y)$ be a $0$--cycle which is in the image of the intersection product map
   \[  A^3(Y)\otimes A^{i_1}(Y)\otimes\cdots\otimes A^{i_s}(Y)\ \to\ A^n(Y)\ ,\]
   with all $i_m\le 2$ (and $i_1+\cdots+i_s=n-3$). Then $a$ is rationally trivial if and only if $\deg(a)=0$.
   \end{corollary}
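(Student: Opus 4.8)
The plan is to exploit the bigraded ring structure on $A^\ast(Y)$ and to show that the hypotheses force the $0$-cycle $a$ into the single graded piece $A^n_{(0)}(Y)$, on which the cycle class map is injective. First I would record that $Y$ has a self-dual MCK decomposition (lemma \ref{quotientmck}), so that $A^\ast(Y)=\bigoplus A^\ast_{(\ast)}(Y)$ is a bigraded ring and the intersection product sends $A^i_{(j)}(Y)\otimes A^{i^\prime}_{(j^\prime)}(Y)$ into $A^{i+i^\prime}_{(j+j^\prime)}(Y)$. Since $H^{\mathrm{odd}}(Y)=H^{\mathrm{odd}}(X)^G=0$, every nonzero grading is even. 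The strategy is then to control the grading of each factor occurring in $a=b\cdot c_1\cdots c_s$ with $b\in A^3(Y)$ and $c_m\in A^{i_m}(Y)$, $i_m\le 2$.

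For the low-codimension factors I would show $A^{i}(Y)=\bigoplus_{j\le 0}A^{i}_{(j)}(Y)$ for $i\le 2$: indeed $A^1(Y)=A^1_{(0)}(Y)$ because $A^1_{hom}(Y)=0$, while $A^2_{(2)}(Y)=0$ by theorem \ref{main} (case $i=2$) and $A^2_{(4)}(Y)=0$ for grading reasons. Hence each $c_m$ has grading $j_m\le 0$. The crucial point concerns the factor $b\in A^3(Y)$, where I claim $A^3_{(j)}(Y)=0$ for all $j>2$. Via the split injection of remark \ref{compat},
\[ A^3_{(j)}(X)\ \hookrightarrow\ \bigoplus_{\mu\in\Bb(k)} A^{3+l(\mu)-k}_{(j)}(S^{l(\mu)})\ ,\]
and since $l(\mu)\le k$ the target codimension $m:=3+l(\mu)-k$ satisfies $m\le 3$. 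On a product of $K3$ surfaces, a nonzero graded piece $A^m_{(j)}(S^r)$ requires $j/2$ of the tensor factors to lie in $A^2_{(2)}(S)$, each of which contributes codimension $2$; thus $A^m_{(j)}(S^r)=0$ whenever $j>m$. As $m\le 3$ this forces $A^3_{(j)}(X)=0$, and hence $A^3_{(j)}(Y)=0$, for $j\ge 4$. Therefore $b$ has grading $j_0\le 2$.

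I would then assemble the conclusion. Decomposing $a=\sum b_{(j_0)}\cdot (c_1)_{(j_1)}\cdots(c_s)_{(j_s)}$, each summand lies in $A^n_{(t)}(Y)$ with $t=j_0+\sum_m j_m\le 2+0=2$. Now $A^n_{(j)}(Y)\hookrightarrow A^n_{(j)}(X)=(\pi^X_{2n-j})_\ast A^n(X)=0$ for $j<0$ (as $\pi^X_m=0$ for $m>2n$), and $A^n_{(2)}(Y)=0$ by theorem \ref{main} (case $i=n$); since the gradings are even, the only surviving grading is $t=0$, so $a\in A^n_{(0)}(Y)$. Finally, the construction of the MCK decomposition on $Y$ yields a split injection $A^n_{(0)}(Y)\hookrightarrow A^n_{(0)}(X)$ compatible with cycle class maps, and lemma \ref{inj1} (applied to $X=(S_k)^{[k]}$ with $i=n=2k\ge 2k-1$) shows $A^n_{(0)}(X)\to H^{2n}(X)$ is injective. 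Hence $A^n_{(0)}(Y)\to H^{2n}(Y)$ is injective; as $a$ is a $0$-cycle, homological triviality is equivalent to $\deg(a)=0$, and the injectivity then gives $a=0$.

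The main obstacle is the grading bound $A^3_{(j)}(Y)=0$ for $j>2$: everything hinges on preventing the product from spilling into a piece $A^n_{(j)}(Y)$ with $j\ge 4$, which is \emph{not} controlled by theorem \ref{main}. This is precisely why the hypothesis permits only a single codimension-$3$ factor (contributing grading at most $2$) together with factors of codimension $\le 2$ (each contributing non-positive grading), so that the total grading cannot exceed $2$.
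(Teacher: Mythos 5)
Your proposal is correct and follows essentially the same route as the paper: both arguments use the bigraded ring structure coming from the MCK decomposition on $Y$ to show that the hypotheses force $a$ into $A^n_{(0)}(Y)\oplus A^n_{(2)}(Y)$ (the codimension-$3$ factor contributing grading at most $2$ and the factors of codimension $\le 2$ contributing non-positive grading), then kill the grading-$2$ piece by theorem \ref{main} and identify $A^n_{(0)}(Y)\cong\QQ$ via lemma \ref{inj1}. The only difference is that you spell out the vanishing $A^3_{(j)}(Y)=0$ for $j>2$ and the final injectivity step in more detail than the paper, which states these facts tersely.
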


\begin{proof} The point is that
   \[  \begin{split} A^3(Y)&=\bigoplus_{j\le 0} A^3_{(j)}(Y)\oplus A^3_{(2)}(Y)\ ,\\
                           A^{i_m}(Y)&=\bigoplus_{j\le 0} A^{i_m}_{(j)}(Y)\ \ \ \hbox{for}\ i_m\le 2\ \\
            \end{split}\]
         (theorem \ref{main}),   
     and so
     \[ a\in A^n_{(0)}(Y)\oplus A^n_{(2)}(Y)\ .\]
     But we have seen that $A^n_{(2)}(Y)=0$ (theorem \ref{main}), and so 
     \[         a\in A^n_{(0)}(Y)\cong\QQ\ .\]
    \end{proof}

\vskip1cm
\begin{nonumberingt} Thanks to all participants of the Strasbourg 2014/2015 ``groupe de travail'' based on the monograph \cite{Vo} for a very stimulating atmosphere.
Many thanks to Yasuyo, Kai and Len for smoothly running the Schiltigheim Math Research Institute.
\end{nonumberingt}

\vskip1cm

\end{document}